\documentclass{amsart}
\usepackage[latin1]{inputenc}
\usepackage[T1]{fontenc}
\usepackage{lmodern}
\usepackage[english]{babel}
\usepackage{microtype}

\usepackage{amsmath,amssymb,amsfonts,amsthm}
\usepackage{mathtools,accents}
\usepackage{mathrsfs}
\usepackage{aliascnt}
\usepackage{braket}
\usepackage{bm}
\usepackage{lineno}
%\linenumbers
%%\usepackage[a4paper,margin=3cm]{geometry}
\usepackage[citecolor=blue,colorlinks]{hyperref}

\usepackage{enumerate}
\usepackage{xcolor}

\usepackage{aliascnt}

\makeatletter
\def\newaliasedtheorem#1[#2]#3{
  \newaliascnt{#1@alt}{#2}
  \newtheorem{#1}[#1@alt]{#3}
  \expandafter\newcommand\csname #1@altname\endcsname{#3}
}
\makeatother

\theoremstyle{plain}
\newtheorem{theorem}{Theorem}[section]
\newaliasedtheorem{lemma}[theorem]{Lemma}
\newaliasedtheorem{prop}[theorem]{Proposition}
\newaliasedtheorem{claim}[theorem]{Claim}
\newaliasedtheorem{corollary}[theorem]{Corollary}

\theoremstyle{definition}
\newaliasedtheorem{definition}[theorem]{Definition}
\newaliasedtheorem{example}[theorem]{Example}

\theoremstyle{remark}
\newaliasedtheorem{remark}[theorem]{Remark}

\numberwithin{equation}{section}

\def\Im{\textrm{Im}}
\def\Re{\textrm{Re}} 
\def\11{{\rm 1~\hspace{-1.4ex}l} }
\def\R{\mathbb R}
\def\C{\mathbb C}
\def\Z{\mathbb Z}
\def\N{\mathbb N}
\def\E{\mathbb E}
\def\T{\mathbb T}

\title
[NLS with spatial white noise ]
{Two dimensional nonlinear Schr\"odinger equation with spatial white noise potential and fourth order nonlinearity}
\author[N. Tzvetkov \and N. Visciglia]
{N. Tzvetkov \and N. Visciglia}

\address{N.~Tzvetkov, CY Cergy-Paris Universit\'e,  Cergy-Pontoise, F-95000, UMR 8088 du CNRS}
\email{nikolay.tzvetkov@cyu.fr}

\address{N.~Visciglia, Dipartimento di Matematica, Universit\`a di Pisa, Largo Bruno Pontecorvo, 5, 56100 Pisa, Italy}
\email{nicola.visciglia@unipi.it}

%\subjclass[2000]{}

%\date{\today \,}

%\keywords{}
\begin{document}

\maketitle

\begin{abstract}
We consider NLS on $\T^2$ with multiplicative spatial white noise and nonlinearity between  cubic and quartic. We prove global existence, uniqueness and convergence almost surely of solutions to a family of properly
regularized and renormalized approximating equations. In particular we extend a previous result by A. Debussche and H. Weber available in the cubic and sub-cubic setting.
\end{abstract}
%%%%%%
\section{Introduction}
\subsection{Statement of the main result}
We are interested in the following family of NLS with multiplicative spatial white noise:
\begin{equation}\label{NLSwhite}
i\partial_t u  = \Delta u + \xi u +\lambda u|u|^p, \quad
u(0,x)=u_0(x), \quad (t,x)\in \R\times \T^2
\end{equation}
where $\xi(x,\omega)$ is space white noise, $2\leq p\leq 3$, $\lambda\leq 0$, and we identify $\T^2$ with $(-\pi,\pi)\times (-\pi, \pi)$.
We work for simplicity with a defocusing nonlinearity, but the results of this paper can be extended to the focusing case under a smallness assumption on the initial datum. 
Our main aim is an improvement on the range of the nonlinearity $p$, from the case $p=2$ achieved by A. Debussche and H. Weber in \cite{ DW}, to the larger range $2\leq p \leq 3$.
We basically follow the approach of \cite{DW}, the main novelty being the introduction of modified energies in the context of \eqref{NLSwhite}. 
These energies allow to cover a larger set of $p$ in \eqref{NLSwhite}. They also have the potential to be useful in the future for the study of the growth of the high Sobolev norms in the context of \eqref{NLSwhite}.  
\\

We assume that $\xi(x,\omega)$ is real valued and has a vanishing zero Fourier mode (or equivalently of mean zero with respect to $x$). This assumption is however not essential because one may remove the zero mode of $\xi$ from the equation by the transform $u\mapsto e^{it\hat{\xi}(0)}u$.  Therefore in the sequel, we will assume that $\xi(x,\omega)$ is given by the following random Fourier series: 
$$
\xi(x,\omega)=\sum_{n\in\Z^2,n\neq 0}\, g_{n}(\omega)\,e^{in\cdot x}\,,
$$
where  $x\in \T^2$ and $(g_n(\omega))$ are identically distributed  standard complex gaussians on the probability space 
$(\Omega, {\mathcal F},p)$. We suppose that  $(g_n(\omega))_{n\neq 0}$ are independent, modulo the relation $\overline{g_n}(\omega)=g_{-n}(\omega)$ (so that $\xi$ is  a.s a real valued distribution). 
\\

Since the white noise $\xi(x,\omega)$ is not a classical function, it is important to properly define what we mean by a solution of \eqref{NLSwhite}.
The nature of the initial data $u_0(x)$ is also of importance in this discussion but even for $u_0(x)\in C^\infty(\T^2)$ it is not clear what we mean by a solution of \eqref{NLSwhite}. 
Let us therefore suppose first that $u_0(x)\in C^\infty(\T^2)$.  Since it is well known how to solve \eqref{NLSwhite} with $\xi(x,\omega)\in C^\infty(\T^2)$ and $u_0(x)\in C^\infty(\T^2)$, it is  natural to consider the following regularized problems: 
\begin{equation}\label{NLSwhite_bis}
i\partial_t u_\varepsilon  = \Delta u_\varepsilon + \xi_\varepsilon u_\varepsilon +\lambda u_\varepsilon |u_\varepsilon|^p\,\, , \quad u_{\varepsilon}(0,x)=u_0(x),
\end{equation}
where $\xi_\varepsilon(x,\omega)=\chi_{\varepsilon}(x)\ast \xi(x,\omega)$, $\varepsilon \in (0,1)$ is a regularization of $\xi$ by convolution with $\chi_{\varepsilon}(x)=\varepsilon^{-2}\chi(x/\varepsilon)$, where $\chi(x)$ is smooth with a support in $\{|x|<1/2\}$
and $\int_{\T^2}\chi dx=1$. Then we have
\begin{equation}\label{xiepsilon}
\xi_{\varepsilon}(x,\omega)=\sum_{n\in\Z^2,n\neq 0}\,  \rho\big(\varepsilon n\big) g_{n}(\omega)\,e^{in\cdot x}\,,
\end{equation}
where $\rho=\hat{\chi}$ is the Fourier transform on $\R^2$ of $\chi$.
\\

Unfortunately,  we do not know how to pass into the limit $\varepsilon\rightarrow 0$ in \eqref{NLSwhite_bis} (even for $u_0(x)\in C^\infty(\T^2)$) and it may be that this limit is quite singular in general.
Our analysis will show that we can only pass into the limit almost surely w.r.t. $\omega$ if we take a well chosen random approximation of the datum $u_0(x)$ in \eqref{NLSwhite_bis}
and if we properly renormalize the phase of the solution $u_\varepsilon(t,x,\omega)$. Following \cite{DW} and \cite{HL} we introduce the following smoothed potential
$Y=\Delta^{-1} \xi$ and its $C^\infty$ regularization 
$Y_\varepsilon=\Delta^{-1} \xi_\varepsilon$, namely:
\begin{equation}\label{Ysenzaepsilon}
Y(x,\omega)=-\sum_{n\in\Z^2,n\neq 0}\, \frac{g_{n}(\omega)}{|n|^2}\,e^{in\cdot x}
\end{equation}
and
\begin{equation}\label{Yepsilon}
Y_{\varepsilon}(x,\omega)=-\sum_{n\in\Z^2,n\neq 0}\,  \rho\big(\varepsilon n\big) \frac{g_{n}(\omega)}{|n|^2}\,e^{in\cdot x}.
\end{equation}
We now consider the regularized problems: 
\begin{equation}\label{NLSwhite_tris}
i\partial_t u_\varepsilon  = \Delta u_\varepsilon + \xi_\varepsilon u_\varepsilon +\lambda u_\varepsilon |u_\varepsilon|^p\,\, , \quad u_{\varepsilon}(0,x)=
u_0(x) e^{Y(x,\omega)-Y_\varepsilon(x,\omega)}
\end{equation}
where we assume that almost surely w.r.t. $\omega$ we have 
$e^{Y(x,\omega)} u_0(x)\in H^2(\T^2)$. Notice that under this assumption the problem
\eqref{NLSwhite_tris} has almost surely w.r.t. $\omega$ a classical unique global solution $u_\varepsilon(t,x,\omega)\in {\mathcal C}(\R; H^2(\T^2))$ 
(see \cite{Ts,BGT}).  Here is our main result. 
\begin{theorem} \label{corproof}   Assume $p\in [2,3], \lambda\leq 0$
and $u_0(x)$ be such that $e^{Y(x,\omega)} u_0(x) \in H^2(\T^2)$ a.s. 
%$\E(\|e^{Y(x,\omega)} u_0(x)\|_{H^2(\T^2)}^q)<\infty$ for every $q\in [1,\infty)$. 
Then there exists an event $\Sigma\subset \Omega$ such that $p(\Sigma)=1$ and
for every $\omega\in \Sigma$
% we have $e^{Y(x, \omega)} u_0(x)\in H^2(\T^2)$  and 
there exists   
$$ 
v (t,x, \omega)\in \bigcap_{\gamma\in [0, 2)}{\mathcal C}(\R; H^\gamma(\T^2))
$$
such that for every $T>0$ and $\gamma \in [0,2)$ we have:
\begin{equation}\label{eas}
\sup_{t\in [-T,T]} \|e^{-iC_\varepsilon t} e^{Y_\varepsilon(x, \omega)} u_\varepsilon(t,x,\omega)- v (t,x,\omega)\|_{H^\gamma(\T^2)}
\overset{\varepsilon\rightarrow 0} \longrightarrow 0,
\end{equation}
where $C_\varepsilon=\sum_{n\in\Z^2,n\neq 0}\,
  \frac{\rho^2(\varepsilon n)}{|n|^2}$ and $u_\varepsilon(t,x,\omega)$
  are solutions to \eqref{NLSwhite_tris}.
Moreover for $\gamma\in [0,1)$ and $\omega\in \Sigma$ we have
\begin{equation}\label{diffnew}
\sup_{t\in [-T,T]} \big \||u_\varepsilon(t,x,\omega)| - e^{-Y(x,\omega)}| v(t,x, \omega)| \big \|_
{{H^\gamma(\T^2)}\cap L^\infty (\T^2)} \overset{\varepsilon\rightarrow 0}
\longrightarrow 0.
\end{equation}
\end{theorem}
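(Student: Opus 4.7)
The first move is the Hopf--Cole type gauge transformation
\[
w_\varepsilon(t,x,\omega) := e^{-iC_\varepsilon t}\, e^{Y_\varepsilon(x,\omega)}\, u_\varepsilon(t,x,\omega),
\]
which, by the choice of initial datum in \eqref{NLSwhite_tris}, starts from $w_\varepsilon(0,\cdot,\omega)=e^{Y(\cdot,\omega)} u_0$, a function that is a.s.\ in $H^2(\T^2)$ and \emph{independent of $\varepsilon$}. A direct computation using $\Delta Y_\varepsilon=\xi_\varepsilon$ (which cancels the $\xi_\varepsilon u_\varepsilon$ term) and $C_\varepsilon=\mathbb{E}|\nabla Y_\varepsilon|^2$ (which the gauge phase is designed to compensate) shows that $w_\varepsilon$ solves a renormalised NLS of the schematic form
\[
i\partial_t w_\varepsilon = \Delta w_\varepsilon - 2\,\nabla Y_\varepsilon\cdot\nabla w_\varepsilon + \bigl(|\nabla Y_\varepsilon|^2 - C_\varepsilon\bigr)\, w_\varepsilon + \lambda\, e^{-p Y_\varepsilon}\, w_\varepsilon |w_\varepsilon|^p.
\]
Every coefficient built from the Gaussian data has an a.s.\ limit: on a full-measure event $\Sigma\subset\Omega$ one has $\nabla Y_\varepsilon\to\nabla Y$ and $|\nabla Y_\varepsilon|^2-C_\varepsilon\to\, :\!|\nabla Y|^2\!:$ in $H^{-\kappa}(\T^2)$, and $e^{-p Y_\varepsilon}\to e^{-pY}$ in $H^s(\T^2)$, for every $\kappa>0$ and $s<1$; these are standard Gaussian-chaos bounds combined with Kolmogorov's criterion.

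The heart of the proof is then a uniform-in-$\varepsilon$ a priori bound for $w_\varepsilon$ in $H^2(\T^2)$ on each interval $[-T,T]$, valid for $\omega\in\Sigma$. The $L^2$ bound is immediate: mass conservation for $u_\varepsilon$ reads $\int e^{-2Y_\varepsilon}|w_\varepsilon|^2\,dx=\|u_0\|_{L^2}^2$, and $e^{\pm Y_\varepsilon}$ is bounded in $L^\infty$ uniformly in $\varepsilon$ a.s. An $H^1$ bound is obtained by translating the Hamiltonian of \eqref{NLSwhite_bis} through the gauge and using the defocusing sign $\lambda\leq 0$ to absorb the nonlinear part, essentially as in \cite{DW}. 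The decisive, genuinely new step is the $H^2$ bound. The naive energy $\|\Delta w_\varepsilon\|_{L^2}^2$ cannot be closed by Gronwall in the regime $p\in(2,3]$: its time derivative generates commutators between the rough drift $\nabla Y_\varepsilon\cdot\nabla$ and two further derivatives on $w_\varepsilon$, together with terms coming from $\Delta(|w_\varepsilon|^pw_\varepsilon)$ that are only conditionally controlled for non-integer $p$. To overcome this one designs a \emph{modified energy}
\[
\mathcal E_\varepsilon(w_\varepsilon) = \|w_\varepsilon\|_{H^2(\T^2)}^2 + \sum_j \mathcal R^{(j)}_\varepsilon(w_\varepsilon),
\]
where each corrector $\mathcal R^{(j)}_\varepsilon$ is a polynomial expression in $w_\varepsilon$ and $\nabla w_\varepsilon$ paired with the enhanced data $\nabla Y_\varepsilon$, $|\nabla Y_\varepsilon|^2-C_\varepsilon$, $e^{-pY_\varepsilon}$, chosen so that the dangerous contributions in $\tfrac{d}{dt}\|\Delta w_\varepsilon\|_{L^2}^2$ cancel term by term; Gronwall applied to the resulting estimate $\tfrac{d}{dt}\mathcal E_\varepsilon(w_\varepsilon(t))\leq F(\mathcal E_\varepsilon(w_\varepsilon(t)))$ then closes the argument. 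Constructing these correctors across the whole range $p\in[2,3]$ is the main obstacle and the principal novelty over \cite{DW}, where only $p=2$ is handled.

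Given these uniform bounds, the remainder of the proof follows a now-classical scheme. One writes the difference equation for $w_\varepsilon-w_{\varepsilon'}$, estimates it at $L^2$ regularity (using the uniform $H^2$ bound for the Lipschitz properties of the nonlinearity and the a.s.\ convergence of the enhanced data for the linear part), and concludes by Gronwall that $(w_\varepsilon)_\varepsilon$ is Cauchy in $\mathcal C([-T,T];L^2(\T^2))$ for every $\omega\in\Sigma$; interpolation with the uniform $H^2$ bound upgrades this to convergence in $\mathcal C([-T,T];H^\gamma(\T^2))$ for every $\gamma<2$, yielding the limit $v$ of \eqref{eas} together with its uniqueness. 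For \eqref{diffnew} one observes that $|u_\varepsilon|=e^{-Y_\varepsilon}|w_\varepsilon|$ since $Y_\varepsilon$ is real and the phase drops on taking absolute values, so the statement reduces to the a.s.\ convergences $e^{-Y_\varepsilon}\to e^{-Y}$ and $w_\varepsilon\to v$ already established, combined with the Sobolev algebra property and the boundedness of $z\mapsto|z|$ on $H^\gamma\cap L^\infty$ for $\gamma<1$.
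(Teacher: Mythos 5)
Your overall architecture --- gauge transform, a.s.\ convergence of the enhanced Gaussian data, mass/energy conservation for the $L^2$ and $H^1$ bounds, a modified energy at the $H^2$ level, a Gronwall argument on differences, and the reduction of \eqref{diffnew} to $|u_\varepsilon|=e^{-Y_\varepsilon}|v_\varepsilon|$ --- is the same as the paper's. But the central step as you state it contains a genuine gap: you claim a \emph{uniform-in-$\varepsilon$} a priori bound for $w_\varepsilon$ in $H^2(\T^2)$. No such bound is established, and it is not expected to hold: the enhanced data $\nabla Y_\varepsilon$ and $:|\nabla Y_\varepsilon|^2:$ diverge logarithmically in every $L^q(\T^2)$ as $\varepsilon\to 0$, these divergent quantities enter both the modified energy and the estimate of its time derivative, and consistently the limit $v$ lies only in $\bigcap_{\gamma<2}H^\gamma$, not in $H^2$. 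What is actually proved (Proposition~\ref{prop123}) is $\sup_{|t|\le T}\|v_\varepsilon(t)\|_{H^2}< |\ln\varepsilon|^{C(\omega)}$, and the whole point of the subsequent Cauchy estimate is that this logarithmic divergence is beaten by the polynomial rate $\varepsilon^\kappa$ coming from Proposition~\ref{specialrandom}. Your Cauchy/interpolation step silently uses uniform boundedness in $H^2$, so as written it does not close; one must carry the $|\ln\varepsilon|^{C(\omega)}$ loss through the Lipschitz estimates on the nonlinearity (via $\|v_\varepsilon\|_{L^\infty}$) and check that quantities of the form $\varepsilon^\kappa\, e^{C(\omega)\ln^{p/2}(|\ln\varepsilon|^{C(\omega)})\,T}$ still tend to zero, which is what forces the dyadic-then-continuous two-step structure of the convergence proof.

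Relatedly, you do not identify the mechanism that opens the range $p\in[2,3]$, and you locate the difficulty in the wrong place. The algebraic identity behind the modified energy (Proposition~\ref{modifen}) is valid for every $p\ge 2$; the restriction $p\le 3$ enters only in the estimate of the error functional $\mathcal H_\varepsilon$, where the factor $\|v_\varepsilon\|_{L^\infty}^{p-1}$ is controlled by the Br\'ezis--Gallou\"et inequality as $\|v_\varepsilon\|_{H^1}^{p-1}\ln^{(p-1)/2}(2+\|\Delta v_\varepsilon\|_{L^2})$, and one needs $(p-1)/2\le 1$ so that the resulting differential inequality $f'\lesssim f\ln(2+f)+|\ln\varepsilon|^8$ can be closed by the logarithmic Gronwall lemma (Proposition~\ref{gronwlog}); for $p>3$ the power of the logarithm exceeds $1$ and the comparison ODE blows up in finite time. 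Without the Br\'ezis--Gallou\"et/log-Gronwall pair, ``Gronwall applied to $\tfrac{d}{dt}\mathcal E\le F(\mathcal E)$'' does not specify a closable argument. The remaining steps (the $L^2$ and $H^1$ bounds, the difference estimate, the uniqueness of the limit, and the treatment of \eqref{diffnew} via the diamagnetic inequality and interpolation) do match the paper.
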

%%%%%
The limits obtained in Theorem~\ref{corproof} are by definition what we may wish to call solutions of \eqref{NLSwhite} with datum $u_0(x)$.  
Observe that $|u_\varepsilon(t,x,\omega)|$ has a well defined limit, while the phase of $u_\varepsilon(t,x,\omega)$ should be suitably renormalized by the diverging constants $C_\varepsilon$ in order to get a limit. We also point out that the  meaning of the constants $C_\varepsilon$,  introduced along the statement of Theorem~\ref{corproof}, is explained in
Section~\ref{probability},  where the renormalization procedure is presented.
\\

It is worth mentioning that despite to \eqref{eas}, that works for $\gamma\in (0,2)$, in
\eqref{diffnew} we assume $\gamma\in (0,1)$. This is due to a technical reason
since, in order to estimate the Sobolev norm of the absolute value of a Sobolev function, 
we use the diamagnetic inequality which, to the best of our knowledge, works up to the $H^1$ regularity.
\\

In a future work \cite{TV_new}, we plan to extend the result of  Theorem~\ref{corproof} to any $p<\infty$ by exploiting the dispersive properties of the Schr\"odinger equation on a compact spatial domain established in \cite{BGT}. In fact, we shall not need to exploit the construction of \cite{BGT} in its full strength because we will only need an $\varepsilon$-improvement of the Sobolev embedding. This means that we will need to make the WKB construction of \cite{BGT}  for solutions oscillating at frequency $h^{-1}$ only up to time $h^{2-\delta}$, $\delta>0$ which are much shorter than the times $h$ achieved in \cite{BGT}. Such a room would hopefully allow us to incorporate in the dispersive estimates the first and zero order terms appearing after the application of the gauge transform (see the next section).
\\

In \cite{TV_new} we also plan to exploit the modified energy method, used indirectly in the proof of Theorem~\ref{corproof} 
(and more directly in the proof of Theorem \ref{main} below) in order to get polynomial bounds on higher Sobolev norms of the obtained solutions, similar to the ones obtained in \cite{PTV} in the case without a white noise potential. 
\\

In Theorem~\ref{corproof} the initial data $u_0(x)$ is well-prepared because it is supposed to satisfy $e^{Y(x,\omega)} u_0(x) \in H^2(\T^2)$ a.s.
It would be interesting to decide whether a suitable application of the $I$-method introduced in \cite{CKSTT} may allow to remove this assumption of well-prepared data. 
For this purpose, one should succeed to establish the limiting property by using energies at level  $H^s$, for a suitable $s<1$. 
%%%%%%%%%
 \subsection{The gauge transform}   
 In the sequel we perform some formal computations that allow us to introduce heuristically 
 a rather useful transformation.  Following \cite{DW} and \cite{HL} we introduce the new unknown:
 \begin{equation}\label{YY}
 v=e^{Y} u
 \end{equation}
 where $u$ is assumed to be formally solution to \eqref{NLSwhite} and $Y=\Delta^{-1} \xi$.
In order to clarify the relevance of this transformation first notice that by direct computation we have that the equation solved (at least formally) by $v$ is the following one: 
\begin{equation}\label{NLSwhitetransform}
i\partial_t v  = \Delta v  - 2\nabla v  \cdot \nabla Y+ v |\nabla Y|^2+\lambda e^{-pY}v|v|^p, \quad 
v(0,x)= e^{Y(x,\omega)} u_0(x).
\end{equation}
Notice that  the quantity $|\nabla Y|^2$ is not well defined
since $\nabla Y$, even if is one derivative more regular than $\xi$,
has still negative Sobolev regularity. However this issue can be settled 
by a renormalization  (see below and Section~\ref{probability} for more details). On the other hand \eqref{NLSwhitetransform} compared with \eqref{NLSwhite}
looks more complicated since a perturbation of order one in the linear part of the equation is added
compared with \eqref{NLSwhite}. Nevertheless, we have the advantage that the coefficients involved in the new equation are more regular that the spatial white noise $\xi$ that appears in \eqref{NLSwhite}.
\\

Another relevant advantage that comes from the new variable $v$ is related to the 
conservation of the Hamiltonian.
Recall that the conservation laws play a key role in the analysis of nonlinear Schr\"odinger equations.
In particular in the context of \eqref{NLSwhite} the quadratic part of the conserved energy is given by
\begin{equation}\label{ZZ}
%{\mathcal E}(u(x))=
\int_{\T^2} (|\nabla u |^2 - |u|^2 \xi) dx\,.
\end{equation}
The key feature in the transformation \eqref{YY} is that there is a cancellation between the two terms in \eqref{ZZ} and this cancellation
is the main point in the definition of a suitable self-adjoint realisation of $\Delta+\xi$ (see \cite{GUZ} and the references therein).  
Indeed, let us compute \eqref{ZZ} in the new variable $v$, hence we have
$u=e^{-Y} v$ and \eqref{ZZ} becomes 
$$
\int_{\T^2} (-e^{-Y} v \Delta (e^{-Y} \bar{v}) - e^{-2Y} |v|^2\xi)dx,
$$
which after some elementary manipulations can be written as:
$$
\int_{\T^2} ( |\nabla v|^2 +|v|^2 \Delta Y  - |v|^2 |\nabla Y|^2-  |v|^2\xi) e^{-2Y}dx\,.
$$
Thanks to the  choice $\Delta Y=\xi$ we get a cancellation 
of the white noise potential leading to 
$$
\int_{\T^2} ( |\nabla v|^2   - |v|^2 |\nabla Y|^2) e^{-2Y}dx\,.
$$
Notice that now the potential energy w.r.t. the new variable $v$ involves the potential 
$|\nabla Y(x)|^2$ which is (morally) one derivate more regular compared with the white noise.
\\

Motivated by the previous discussion, we observe that if $u_\varepsilon(t,x,\omega)$ is a solution to
\begin{equation*}\label{introwhitenoiseregnew}
i\partial_t u_\varepsilon  = \Delta u_\varepsilon + u_\varepsilon \xi_\varepsilon   +\lambda u_\varepsilon|u_\varepsilon|^p,
\end{equation*}
where $\xi_\varepsilon (x,\omega)$ is defined by \eqref{xiepsilon},
then the transformed function
\begin{equation}\label{gaugetranf}
v_\varepsilon(t,x,\omega)= e^{-iC_\varepsilon t} e^{Y_\varepsilon(x,\omega)} u_\varepsilon(t,x,\omega)
\end{equation}
satisfies
\begin{equation*}\label{NLSgaugeintro}
i\partial_t v_\varepsilon  = \Delta v_\varepsilon  - 2\nabla v_\varepsilon  \cdot \nabla Y_\varepsilon + v_\varepsilon :|\nabla Y_\varepsilon |^2:
+\lambda  e^{-pY_\varepsilon} v_\varepsilon|v_\varepsilon|^p.
\end{equation*}
Here we have $Y_\varepsilon (x,\omega)$ given by \eqref{Yepsilon} and $:|\nabla Y_\varepsilon |^2:(x,\omega)$ is defined as follows:
%By looking at the expression of 
%$\xi_\varepsilon$ we have
%\begin{equation}\label{Yepsilon}
%Y_{\varepsilon}(x,\omega)=\sum_{n\in\Z^2,n\neq 0}\,  \rho\big(\varepsilon n\big) \frac{g_{n}(\omega)}{|n|^2}\,e^{in\cdot x}\,,
%\end{equation}
%where $\rho=\hat{\chi}$ is the Fourier transform on $\R^2$ of $\chi$ and we also define
\begin{equation}\label{renormalization}
:|\nabla Y_\varepsilon |^2:(x,\omega)=|\nabla Y_\varepsilon |^2(x,\omega)-C_\varepsilon\end{equation}
where 
\begin{equation}\label{Cepsilon}
C_\varepsilon=\sum_{n\in\Z^2,n\neq 0}\,
 \frac{\rho^2(\varepsilon n)}{|n|^2}
 \end{equation} is the same constant as the one appearing in Theorem~\ref{corproof}.
One can show that almost surely w.r.t. $\omega$
we have the following convergence, in spaces with negative regularity:
$$:|\nabla Y_\varepsilon |^2:(x,\omega)\overset{\varepsilon\rightarrow 0} \longrightarrow :|\nabla Y|^2:(x,\omega),$$ 
where
\begin{multline}\label{:nabla:}:|\nabla Y|^2:(x,\omega)=\sum_{\substack{(n_1,n_2)\in\Z^4\\n_1\neq 0, n_2\neq 0\\n_1\neq n_2}}\,  
\frac{n_1\cdot n_2}{|n_1|^2 |n_2|^2}\,
g_{n_1}(\omega)\overline{g_{n_2}(\omega)}\,e^{i(n_1-n_2)\cdot x}
\\+\sum_{n\in\Z^2,n\neq 0}\,\frac{|g_n(\omega)|^2 -1}{|n|^2},\end{multline}
(see Section~\ref{probability} for details).\\

The main idea to establish Theorem \ref{corproof} is to look for the convergence of $v_\varepsilon$ as $\varepsilon\rightarrow 0$, and hence to get informations
on $u_\varepsilon$ by going back via the transformation \eqref{gaugetranf}.
\begin{theorem}\label{main} Assume $p\in [2,3], \lambda\leq 0$ and $u_0(x)$ be such that $e^{Y(x,\omega)} u_0(x) \in H^2(\T^2)$ a.s.
Then there exists an event $\Sigma\subset \Omega$ such that $p(\Sigma)=1$ and
for every $\omega\in \Sigma$  there exists $$ v(t,x,\omega)\in \bigcap_{\gamma\in [0, 2)}{\mathcal C}(\R; H^\gamma(\T^2))$$
such that  for every fixed $T>0$ and $\gamma\in [0,2)$ we have: 
$$\sup_{t\in [-T, T]} \|v_\varepsilon(t,x, \omega) - v(t,x, \omega)\|_{H^\gamma(\T^2)}
\overset{\varepsilon\rightarrow 0}\longrightarrow 0.$$
Here we have denoted by
$v_\varepsilon(t,x,\omega)$  for $\omega\in \Sigma$ the unique global solution in the space ${\mathcal C} (\R;H^2(\T^2))$ of the following problem:
\begin{multline}\label{NLSgaugeintronew}
i\partial_t v_\varepsilon  = \Delta v_\varepsilon  - 2\nabla v_\varepsilon  \cdot \nabla Y_\varepsilon + v_\varepsilon :|\nabla Y_\varepsilon |^2:
+\lambda  e^{-pY_\varepsilon}v_\varepsilon|v_\varepsilon|^p, \\
v_\varepsilon(0,x)=v_0(x)\in H^2(\T^2)
\end{multline}
and $ v(t,x, \omega)$ denotes for $\omega\in \Sigma$ the unique global solution in the space ${\mathcal C} (\R; H^\gamma(\T^2))$, for $\gamma\in (1, 2)$,
of the following limit problem:
\begin{multline}\label{NLSgaugeintronewbar}
i\partial_t  v  = \Delta v  - 2\nabla  v  \cdot \nabla Y +  v :|\nabla Y |^2:
+\lambda  e^{-pY} v| v|^p,\,\\
v(0,x)=v_0(x)\in H^2(\T^2)
\end{multline}
where in both Cauchy problems \eqref{NLSgaugeintronew}
and \eqref{NLSgaugeintronewbar} $v_0(x)=e^{Y(x,\omega)} u_0(x)$, $\omega\in \Sigma$.
\end{theorem}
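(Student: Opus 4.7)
The plan is to produce the limit $v$ by first establishing uniform-in-$\varepsilon$ $H^2$ bounds for the regularized solutions $v_\varepsilon$ via a modified-energy argument --- the main novelty compared with \cite{DW}, where the cubic-or-below nonlinearity can be handled at the $H^1$ level --- and then closing a low-regularity Cauchy-in-$\varepsilon$ estimate.

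First, using standard Wiener-chaos computations, I would fix a set $\Sigma\subset\Omega$ of full probability on which $Y_\varepsilon\to Y$ in $W^{1,q}(\T^2)$ for every $q<\infty$, $:|\nabla Y_\varepsilon|^2: \to :|\nabla Y|^2:$ in $H^{-\sigma}(\T^2)$ for some $\sigma>0$, and $e^{\pm\alpha Y_\varepsilon}$ stays bounded in every $L^q(\T^2)$ uniformly in $\varepsilon$; these facts are collected in Section~\ref{probability}. For fixed $\varepsilon$ and $\omega\in\Sigma$, since all coefficients of \eqref{NLSgaugeintronew} are smooth, standard theory provides a unique local solution $v_\varepsilon\in\mathcal{C}([0,T_\varepsilon);H^2(\T^2))$.

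Next, I would establish uniform-in-$\varepsilon$ energy bounds. A direct computation shows that the weighted mass $\int|v_\varepsilon|^2 e^{-2Y_\varepsilon}\,dx$ and the weighted Hamiltonian
\begin{equation*}
H_\varepsilon(v)=\int\bigl(|\nabla v|^2-|v|^2:|\nabla Y_\varepsilon|^2:\bigr)e^{-2Y_\varepsilon}\,dx+\frac{2\lambda}{p+2}\int|v|^{p+2}e^{-(p+2)Y_\varepsilon}\,dx
\end{equation*}
are conserved along \eqref{NLSgaugeintronew}; since $\lambda\leq 0$, combined with the previous step this yields $\|v_\varepsilon(t)\|_{H^1}\leq C(\omega)$. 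To propagate $H^2$ regularity I would introduce a modified energy
\begin{equation*}
\mathcal{E}(v_\varepsilon)=\int|\Delta v_\varepsilon|^2 e^{-2Y_\varepsilon}\,dx+\mathcal{R}(v_\varepsilon,Y_\varepsilon),
\end{equation*}
with $\mathcal{R}$ a functional quadratic in $v_\varepsilon$ designed so that the order-one-in-$v_\varepsilon$ contributions to $\frac{d}{dt}\mathcal{E}$ generated by the transport term $-2\nabla v_\varepsilon\cdot\nabla Y_\varepsilon$ cancel exactly. What remains should be estimable by $C(\omega,\|v_\varepsilon\|_{H^1})\|v_\varepsilon\|_{H^2}^2$, the nonlinear part via a 2D product estimate for $\|v|v|^p\|_{H^2}$. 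Gr\"onwall then produces a uniform $H^2$ bound and promotes $v_\varepsilon$ to a global solution.

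Finally, for $\varepsilon_1,\varepsilon_2\in(0,1)$ the difference $w=v_{\varepsilon_1}-v_{\varepsilon_2}$ solves a Schr\"odinger-type equation whose forcing is a product of the uniform $H^2$ norms from the previous step and low-regularity distances between $Y_{\varepsilon_j}$ and between $:|\nabla Y_{\varepsilon_j}|^2:$, both of which tend to zero on $\Sigma$. A Gr\"onwall estimate for the weighted $L^2$ norm of $w$ gives the Cauchy property of $(v_\varepsilon)$ in $\mathcal{C}([-T,T];L^2(\T^2))$, which interpolated with the uniform $H^2$ bound upgrades to the stated convergence in $\mathcal{C}([-T,T];H^\gamma(\T^2))$ for every $\gamma\in[0,2)$. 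The limit $v$ then solves \eqref{NLSgaugeintronewbar} and is unique in $\mathcal{C}(\R;H^\gamma)$, $\gamma\in(1,2)$, by the same argument applied to two candidate solutions. The main obstacle is the construction of $\mathcal{R}$: it must make $\mathcal{E}$ coercive in $\|v_\varepsilon\|_{H^2}^2$ (modulo the random weights) while cancelling every derivative loss arising from the transport term, and the restriction $p\leq 3$ is expected to be sharp for this scheme as it corresponds to the $C^2$-regularity of $z\mapsto|z|^p$ needed to differentiate the nonlinearity twice.
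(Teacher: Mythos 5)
Your architecture matches the paper's: gauge already applied, mass/Hamiltonian conservation for the $H^1$ bound, a modified energy at the $H^2$ level, a Gr\"onwall estimate for the weighted $L^2$ norm of differences, interpolation, and uniqueness via multiplication by $e^{-2Y_\varepsilon}\bar r$ and a final limit $\varepsilon\to0$. The gap is at the central step: the uniform-in-$\varepsilon$ $H^2$ bound you assert is not what the modified-energy method delivers, and the two reasons are precisely the points your sketch elides. First, the remainder $\frac{d}{dt}\mathcal{E}_\varepsilon$ contains terms of the schematic form $\int|\partial_t v_\varepsilon||\nabla v_\varepsilon|^2|v_\varepsilon|^{p-1}$, and in two dimensions $\|v_\varepsilon\|_{L^\infty}^{p-1}$ is \emph{not} controlled by $C(\omega,\|v_\varepsilon\|_{H^1})$; one must invoke the Br\'ezis--Gallou\"et inequality, which yields $\|v_\varepsilon\|_{H^2}^2\ln^{\frac{p-1}{2}}(2+\|v_\varepsilon\|_{H^2})$ rather than $C\|v_\varepsilon\|_{H^2}^2$ (Proposition~\ref{firstcor}). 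This forces the logarithmic Gr\"onwall lemma (Proposition~\ref{gronwlog}), whose applicability requires $\frac{p-1}{2}\le 1$ --- this, and not the $C^2$ regularity of $z\mapsto|z|^p$ (which holds for every $p\ge2$), is where the restriction $p\le3$ enters. Second, the modified energy and its derivative unavoidably involve $\nabla Y_\varepsilon$ in positive Lebesgue norms (e.g.\ the term $4\sum_i\int|\partial_i v|^2(\partial_iY_\varepsilon)^2e^{-2Y_\varepsilon}$), and $\|\nabla Y_\varepsilon\|_{L^q}\sim|\ln\varepsilon|$ diverges; the limit $\nabla Y$ only lives in negative Sobolev spaces. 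The outcome is the bound $\sup_{|t|\le T}\|v_\varepsilon(t)\|_{H^2}\le|\ln\varepsilon|^{C(\omega)}$ of Proposition~\ref{prop123}, which blows up as $\varepsilon\to0$.

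This non-uniformity is not cosmetic: it breaks your Cauchy argument as stated. The Gr\"onwall constant in the $L^2$ estimate for $w=v_{\varepsilon_1}-v_{\varepsilon_2}$ is $C(\omega)\ln^{\frac p2}(|\ln\varepsilon_1|^{C(\omega)})$, so the resulting bound is of the form $\varepsilon_2^{\kappa}\exp\bigl(C(\omega)T\ln^{\frac p2}(|\ln\varepsilon_1|^{C(\omega)})\bigr)$; for arbitrary pairs with $\varepsilon_1\ll\varepsilon_2$ (say $\varepsilon_1=e^{-e^{1/\varepsilon_2}}$) this does not tend to zero. The paper circumvents this by a two-step scheme: first the dyadic sequence $v_{2^{-k}}$ is shown to be Cauchy (there $\varepsilon_1=2^{-(k+1)}$ and $\varepsilon_2=2^{-k}$ are comparable and the bound is summable in $k$), and then each $v_\varepsilon$ with $\varepsilon\in(2^{-(k+1)},2^{-k}]$ is compared to $v_{2^{-k}}$. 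Likewise the interpolation to $H^\gamma$, $\gamma<2$, survives only because $\varepsilon^{\tilde\kappa}|\ln\varepsilon|^{C(\omega)}\to0$. To repair your proposal you should replace the claimed uniform $H^2$ bound by the logarithmically divergent one, track the resulting $\ln\ln(1/\varepsilon)$ factors through the difference estimate, and restructure the convergence proof along dyadic scales.
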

%%%%%
The result of Theorem~\ref{main} for $p=2$, with a weaker convergence, was established in \cite{DW}. 
Here we still follow the strategy developed in \cite{DW} which can be summarized as follows:
\begin{enumerate}
\item A priori bounds for the $H^2$-norm of $v_\varepsilon$;
\item  Convergence of the special sequence $(v_{2^{-k}})$   a.s. w.r.t. $\omega$ in ${\mathcal C}([-T,T];H^\gamma(\T^2))$ for every $T>0$;
\item  Convergence of the whole family $(v_{\varepsilon})$  a.s. w.r.t. $\omega$ in ${\mathcal C}([-T,T];H^\gamma(\T^2))$ for every $T>0$;
\item  Pathwise uniqueness of solutions to  \eqref{NLSgaugeintronewbar}.
\end{enumerate}
In contrast with \cite{DW}, we do not use the pathwise uniqueness in the convergence procedure of steps $(2)$ and $(3)$.
The main novelty in this paper is that we can extend the $H^2$ bounds in step $(1)$  to the range of the nonlinearity $2\leq p\leq 3$. 
The key tool compared with \cite{DW} is the use of suitable energies in conjunction with the Br\' ezis-Gallou\"et inequality. 
This technique is inspired by \cite{Ts,OV,PTV}.  As already mentioned another difference compared with \cite{DW} is that we establish the convergence of solutions
to the regularized problems to the solution of the limit problem almost surely rather than in the weaker  convergence in probability.
\\

%Once the $H^2$ bound is established and several properties of the stochastic objects involved
%in our analysis are proved almost surely, then the argument in \cite{DW} can be repeated {\em mutatis mutandis}.
%Nevertheless for the sake of completeness we sketch the proof of all the fundamental steps.
%\\
It would be interesting to decide whether the modified energy argument, developed in this paper can be useful in order to improve the range of the nonlinearity in \cite{DM},
where the NLS with multiplicative space white noise on the whole space is considered. 
\subsection{Notations}
Next we fix some notations.
We denote by $L^q$, $W^{s,q}$, $H^\gamma$,  the spaces
$L^q(\T^2)$, $W^{s,q}(\T^2)$, $H^\gamma(\T^2)$. 
Let us give the precise definition of $W^{s,q}$, we use. 
The linear operator $D^s$ is defined by
$$
D^s(e^{in\cdot x})=\langle n\rangle ^{s} e^{in\cdot x},
$$
where $\langle n\rangle=(1+|n|^2)^{\frac{1}{2}}$.  Then we define $W^{s,q}$ via the norm
$$
\|f\|_{W^{s,p}(\T^2)}:=\|D^s(f)\|_{L^p(\T^2)}\,.
$$
We also use the following notation for weighted Lebesgue spaces:
$\|f\|_{L^q(w)}^q=\int_{\T^2} |f|^q w \hbox{ } dx$ 
where $w\geq 0$ is a weight. We shall denote by $x=(x_1, x_2)$ the generic point in $\T^2$ and
$\nabla$ will be the full gradient operator
w.r.t. the space variables and also $\partial_i$ the partial derivative w.r.t. $x_i$. 
To simplify the presentation we denote by $\int_{\T^2} h$ the integral with respect
to the Lebesgue measure $\int_{\T^2} h \hbox{ } dx$. 
%We shall denote by ${\mathcal K}_\varepsilon(\omega)$ a generic family of random constants uniformly bounded in the parameter  $\varepsilon\in (0,1)$ a.s. w.r.t. $\omega$,
 %where $\omega$ belongs to the probability space $(\Omega, {\mathcal F},p)$. 
Starting from Section~3,  we will denote by $C(\omega)$ a generic random variable finite on the event of full probability defined in Proposition~\ref{ercol10}. The random contant $C(\omega)$ will be allowed to change from line to line
 in our computations. For every $q\in [1,\infty]$ we denote by $q'$ the conjugate H\"older exponent.  We shall use the notation $\lesssim$ in
order to denote a lesser or equal sign $\leq$ up to a positive multiplicative constant $C$, that in turn
may depend harmlessly on contextual parameters. In some cases we shall drop the dependence of the functions
from the variable $(t, x , \omega)$ when it is clear from the context.
\subsection{Plan of the remaining part of the paper}
In the next section, we present some stochastic analysis considerations.
Section~3 is devoted to the basic bounds resulting from the Hamiltonian structure and some variants of the Gronwall lemma. 
Section~4 contains the key bounds at $H^2$ level.  The proof of the algebraic proposition Proposition~\ref{modifen} is postponed to the last section. 
In Section~5, we present the proof of Theorem~\ref{main} while Section~6 is devoted to the proof of
Theorem~\ref{corproof}. In the final Section~7, we present the proof of Proposition~\ref{modifen}.
%%%
\section{Probabilistic results}\label{probability}
In this section we collect a series of results concerning the probabilistic object
$Y$ and its regularized version $Y_\varepsilon$ (see \eqref{Yepsilon}). The main point is that all the needed probabilistic properties
are established a.s., which is the key point to establish convergence a.s.
in Theorems~\ref{corproof} and \ref{main}.
We shall need in the rest of the paper some special random constants 
that will be a combination of the ones involved in Proposition \ref{specialrandom}.\\

First we justify the introduction of the constant $C_\varepsilon$ in \eqref{Cepsilon} as follows.
By definition of $Y_{\varepsilon}(x,\omega)$ (see \eqref{Yepsilon}) we have
$$
|\nabla Y_\varepsilon|^2(x,\omega)=\sum_{\substack{(n_1,n_2)\in\Z^4\\n_1\neq 0, n_2\neq 0}}\,  \rho\big(\varepsilon n_1\big)  \rho
(\varepsilon n_2) \frac{n_1\cdot n_2}{|n_1|^2 |n_2|^2}\,
g_{n_1}(\omega)\overline{g_{n_2}(\omega)}\,e^{i(n_1-n_2)\cdot x}
$$
whose zero Fourier coefficient is the random constant
$$\sum_{n\in\Z^2,n\neq 0}\,
  \rho^2(\varepsilon n)  \frac{|g_{n}(\omega)|^2}{|n|^2}.
$$
Hence the constant $C_\varepsilon$ defined in \eqref{Cepsilon}
is the average on $\Omega$ of the zero Fourier modes defined above.
We shall prove that a.s. w.r.t. $\omega$ the functions 
$:|\nabla Y_\varepsilon|^2:(x,\omega)$ defined in \eqref{renormalization}
converges as $\varepsilon\rightarrow 0$, in the topology $W^{-s,q}$ for $s\in (0,1)$ and $q\in (1, \infty)$, to the
limit object $:|\nabla Y|^2:(x,\omega)$ defined by \eqref{:nabla:}.
\\

Next we gather the key probabilistic properties that we need in the rest of the paper.
\begin{prop}\label{specialrandom} 
Let $s\in (0,1)$ and $q\in (1,\infty)$ be given. There exists an event $\Sigma_0\subset \Omega$ such that
$p(\Sigma_0)=1$ and for every $\omega\in \Sigma_0$ there exists a finite constant $C(\omega)>0$ such that:
\begin{itemize}
\item we have the following uniform bound:
\begin{multline*} \sup_{\varepsilon\in (0,1)} \big \{\|e^{\pm Y_\varepsilon}(x,\omega)\|_{L^\infty},
\|e^{\pm Y_\varepsilon}(x,\omega)\|_{W^{s,q}},\|\nabla Y_\varepsilon (x,\omega)\|_{L^q} |\ln \varepsilon|^{-1},\\
\|:|\nabla Y_\varepsilon |^2:(x,\omega) \|_{L^q} |\ln \varepsilon|^{-2},
\|:|\nabla Y_\varepsilon |^2: (x,\omega)\|_{W^{-s,q}}
\big \}<C(\omega);
\end{multline*}
\item for a suitable $\kappa>0$ we have:
\begin{equation}\label{edna_zvezda}
\|Y_\varepsilon (x,\omega)-Y(x,\omega)\|_{W^{s,q}}<C(\omega) \varepsilon^\kappa,
\end{equation}
in particular by choosing $sq>2$ we get by Sobolev embedding 
$$\|Y_\varepsilon (x,\omega)-Y(x,\omega)\|_{L^\infty}<C(\omega) \varepsilon^\kappa$$
and also
$$\|e^{-pY_\varepsilon (x,\omega)}-e^{-pY(x,\omega)}\|_{L^\infty}<C(\omega) \varepsilon^\kappa,
\quad \forall p\in \R;$$
\item for a suitable $\kappa>0$ we have
\begin{equation}\label{dve_zvezdi}
\|\nabla Y_\varepsilon (x,\omega) - \nabla Y(x,\omega) \|_{W^{-s,q}}< C(\omega) \varepsilon^\kappa,
\end{equation}
and
\begin{equation}\label{tri_zvezdi}
\|:|\nabla Y_\varepsilon|^2:(x,\omega) - :|\nabla Y|^2:(x,\omega) \|_{W^{-s,q}}< C(\omega) \varepsilon^\kappa.
\end{equation}
\end{itemize}
\end{prop}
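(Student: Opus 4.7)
The plan is to obtain all the asserted bounds by first computing the relevant Gaussian moments via Wiener chaos / hypercontractivity (Nelson's inequality) and then promoting these moment bounds to almost-sure bounds through the Borel--Cantelli lemma applied along the dyadic sequence $\varepsilon_k = 2^{-k}$. All quantities of interest either sit in a fixed Wiener chaos---$Y_\varepsilon$ and $\nabla Y_\varepsilon$ in the first, the Wick square $:|\nabla Y_\varepsilon|^2:$ in the second---or are simple functions thereof (namely $e^{\pm Y_\varepsilon}$), so hypercontractivity lets us control all $L^p(\Omega)$ moments by the $L^2(\Omega)$ moment, up to an explicit factor depending on $p$.

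First, I would carry out the pointwise $L^2(\Omega)$ computations by Plancherel. For example,
\[
\E|\nabla Y_\varepsilon(x)|^2 = \sum_{n\neq 0}\frac{\rho^2(\varepsilon n)}{|n|^2} \sim |\ln\varepsilon|,\qquad
\E\bigl|{:|\nabla Y_\varepsilon|^2:}(x)\bigr|^2 \sim |\ln\varepsilon|^2,
\]
while the moments of $Y_\varepsilon$ and those of the negative Sobolev norms $\|\nabla Y_\varepsilon\|_{W^{-s,q}}$, $\|{:|\nabla Y_\varepsilon|^2:}\|_{W^{-s,q}}$ are uniform in $\varepsilon$ because the relevant Fourier series converge absolutely once enough derivatives are removed. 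Integrating in $x$ and applying hypercontractivity, one gets, for any $p,q\in[2,\infty)$,
\[
\E\|\nabla Y_\varepsilon\|_{L^q}^p \lesssim p^{p/2}|\ln\varepsilon|^{p/2},\qquad
\E\|{:|\nabla Y_\varepsilon|^2:}\|_{L^q}^p \lesssim p^{p}|\ln\varepsilon|^p.
\]
The $L^\infty$ and $W^{s,q}$ bounds on $e^{\pm Y_\varepsilon}$ follow from Fernique's theorem applied to $Y$ together with a.s.\ uniform $L^\infty$ convergence $Y_\varepsilon\to Y$; the $W^{s,q}$ control of the exponential is then extracted via standard composition estimates using $L^\infty$ bounds and the chain rule.

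For the convergence rates \eqref{edna_zvezda}--\eqref{tri_zvezdi}, I would exploit the elementary inequality $|1-\rho(\varepsilon n)| \leq C\min(\varepsilon|n|,1)^\beta$ for any $\beta\in(0,1]$, valid since $\rho$ is smooth and $\rho(0)=1$. This yields, for instance,
\[
\E\|Y_\varepsilon - Y\|_{W^{s,q}}^2 \lesssim \sum_{n\neq 0}\frac{\langle n\rangle^{2s}|1-\rho(\varepsilon n)|^2}{|n|^4} \lesssim \varepsilon^{2\kappa}
\]
for a small $\kappa>0$, after choosing $\beta$ so that the resulting series still converges absolutely. Analogous Plancherel computations handle $\nabla Y_\varepsilon - \nabla Y$ in $W^{-s,q}$ and the Wick square difference; the latter naturally splits into an off-diagonal second-chaos double sum and the diagonal piece $\sum_n (1-\rho^2(\varepsilon n))(|g_n|^2-1)/|n|^2$, both controlled by the same $\varepsilon^\kappa$ mechanism. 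Hypercontractivity again upgrades every $L^2(\Omega)$ bound to any $L^p(\Omega)$ bound while preserving the $\varepsilon^\kappa$ gain.

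To pass from the dyadic $\varepsilon_k$ to every $\varepsilon\in (0,1)$, I would combine Borel--Cantelli with a uniform-in-$\varepsilon$ equicontinuity estimate: for $\varepsilon \in [\varepsilon_{k+1},\varepsilon_k]$, the increments $Y_\varepsilon - Y_{\varepsilon_k}$, $\nabla Y_\varepsilon - \nabla Y_{\varepsilon_k}$, etc.\ admit moment bounds of order $\varepsilon_k^\kappa$ by the same computations, so $\sup_\varepsilon$ is controlled by $\sup_k$ up to harmless constants. The intersection of a countable family of probability-one events (one per quantity, and per $(s,q,p)$ chosen from a countable dense set) then produces a single event $\Sigma_0$ on which the random constant $C(\omega)$ in the statement is finite. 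The main obstacle I anticipate is essentially bookkeeping rather than new analysis: organising the estimates so that a single $\Sigma_0$ works simultaneously for every admissible $s\in(0,1)$ and $q\in(1,\infty)$, deducing the $L^\infty$ control of $e^{\pm Y_\varepsilon}$ from $W^{s,q}$ convergence via Sobolev embedding with $sq>2$, and checking that the logarithmic prefactors $|\ln\varepsilon|^{-1}$ and $|\ln\varepsilon|^{-2}$ leave enough margin to absorb the polynomial-in-$k$ losses incurred in Borel--Cantelli.
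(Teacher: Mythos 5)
Your strategy for the dyadic sequence --- pointwise $L^2(\Omega)$ computations by Plancherel, upgrading to $L^p(\Omega)$ by hypercontractivity (with a $\sqrt{p}$ loss in the first chaos and $p$ in the second), then Chebyshev and Borel--Cantelli --- is exactly what the paper does in the first steps of its proof, and your moment computations, including the interpolation $|1-\rho(\varepsilon n)|\lesssim \min(\varepsilon|n|,1)^{\beta}$ used to extract the rate $\varepsilon^{\kappa}$, are correct. The treatment of $e^{\pm Y_\varepsilon}$ via uniform $L^\infty$ control of $Y_\varepsilon$ also goes through as you describe.

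The gap is in the passage from the dyadic sequence to all $\varepsilon\in(0,1)$. You assert that because each increment $Y_\varepsilon-Y_{\varepsilon_k}$ with $\varepsilon\in[\varepsilon_{k+1},\varepsilon_k]$ has moments of order $\varepsilon_k^{\kappa}$, the supremum over $\varepsilon$ in that interval is controlled ``up to harmless constants.'' But a moment bound for each fixed $\varepsilon$ yields, through Chebyshev and Borel--Cantelli, an almost sure bound only along countably many values of $\varepsilon$; it gives no control of $\sup_{\varepsilon\in[\varepsilon_{k+1},\varepsilon_k]}$, which is a supremum over an uncountable family, and the exceptional null sets could a priori accumulate. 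To close this you need either a genuine chaining or Kolmogorov-continuity argument for the Banach-space-valued process $\varepsilon\mapsto Y_\varepsilon$, or the device the paper actually uses: Lemma~\ref{Q} shows that almost surely $|g_n(\omega)|\le K(\omega)\langle n\rangle^{\gamma}$ for all $n$ and any fixed $\gamma>0$, and on that single event the difference $Y_\varepsilon-Y_{\varepsilon_N}$ is estimated \emph{pathwise and deterministically}: its $W^{s,q}$ norm is bounded by $K\sum_{n}|\rho(\varepsilon n)-\rho(\varepsilon_N n)|\,\langle n\rangle^{-(2-s-\gamma)}$, and the mean value theorem combined with the rapid decay of $\rho$ makes this sum $O(\varepsilon_N^{1-s-\gamma})$ uniformly over $\varepsilon$ in the interval. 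This pathwise step (and its analogues for $\nabla Y_\varepsilon$ in $L^q$ and for the Wick square in $W^{-s,q}$) is the one missing ingredient in your outline; everything else matches the paper's argument.
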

Let us observe that since the condition on $s$ is open, by using the Sobolev embedding we can include the case $q=\infty$ in \eqref{edna_zvezda}, \eqref{dve_zvezdi}, \eqref{tri_zvezdi}. 
\\

We shall split the proof of Proposition \ref{specialrandom} in several propositions.
The following result will be of importance to pass informations from
a suitable discrete sequence $\varepsilon_N$ to the continuous 
parameter $\varepsilon$. Notice that the independence property of $(g_n)$ is not used in its proof. 
\begin{lemma}\label{Q}
Let $\gamma>0$ be fixed, then
there exists an event $\Sigma_1$ with full measure such that
for every $\omega\in \Sigma_1$ there exists $K>0$ such that
\begin{equation}\label{minkpao}
|g_n(\omega)|< K \langle n\rangle^{\gamma},\quad \forall\, n\in\Z^2\setminus\{0\}.
\end{equation}
\end{lemma}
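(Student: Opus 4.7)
The plan is a direct application of the Borel--Cantelli lemma, and since the statement asks only for the pointwise polynomial bound (not for any finer information), independence of the $g_n$'s is not actually needed---which matches the remark preceding the lemma. First I would recall that each $g_n$ is a standard complex Gaussian, so it has Gaussian tails: there are universal constants $C_0,c_0>0$ with $p(|g_n|>t)\leq C_0 e^{-c_0 t^2}$ for every $t>0$. Evaluating at $t=\langle n\rangle^{\gamma}$ and then using that the number of lattice points with $R\leq |n|<R+1$ in $\Z^2$ is $O(R)$, I obtain
\[
\sum_{n\in\Z^2\setminus\{0\}} p\bigl(|g_n(\omega)|>\langle n\rangle^{\gamma}\bigr)\;\leq\; C_0\sum_{n\in\Z^2\setminus\{0\}} e^{-c_0\langle n\rangle^{2\gamma}}\;<\;+\infty,
\]
the convergence being valid for \emph{any} $\gamma>0$ because the exponential decay beats the polynomial volume growth of lattice annuli.

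With this summability in hand, the first Borel--Cantelli lemma yields a full-probability event
\[
\Sigma_1:=\Bigl\{\omega\in\Omega:\;|g_n(\omega)|\leq\langle n\rangle^{\gamma}\text{ for all but finitely many }n\in\Z^2\setminus\{0\}\Bigr\}.
\]
For $\omega\in\Sigma_1$ only finitely many indices $n$ give $|g_n(\omega)|/\langle n\rangle^{\gamma}>1$, so the quantity $M(\omega):=\sup_{n\neq 0}|g_n(\omega)|/\langle n\rangle^{\gamma}$ is finite; setting for instance $K(\omega):=M(\omega)+1$ yields the strict inequality $|g_n(\omega)|<K(\omega)\langle n\rangle^{\gamma}$ for every $n\in\Z^2\setminus\{0\}$, as required by \eqref{minkpao}. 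There is no genuine obstacle in this proof; the only mild subtlety to flag is that the events $\{|g_n|>\langle n\rangle^{\gamma}\}$ are not mutually independent because of the reality constraint $\overline{g_n}=g_{-n}$, which is precisely why one invokes the first Borel--Cantelli lemma (requiring only summability of probabilities) rather than the second.
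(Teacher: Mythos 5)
Your proof is correct and rests on exactly the same ingredients as the paper's: the Gaussian tail bound $p(|g_n|>t)\lesssim e^{-ct^2}$, summability over $n\in\Z^2$ of these tails for any $\gamma>0$, and the observation that no independence is required. The only cosmetic difference is packaging: the paper runs a union bound at threshold $K\langle n\rangle^{\gamma}$ and lets $K\to\infty$ (writing $\Sigma_1=\bigcup_K\Omega_K$), whereas you fix the threshold at $\langle n\rangle^{\gamma}$, invoke the first Borel--Cantelli lemma, and absorb the finitely many exceptional modes into the random constant $K(\omega)$ --- both are valid and essentially identical arguments.
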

\begin{proof}
We first prove
\begin{equation}\label{gammafree}
p\Big( \{\omega\in \Omega: \sup_{n\in \Z^2, n\neq 0} \big( \langle n\rangle^{-\gamma} |g_n(\omega)|\big) \geq K \}\Big)
\overset{K\rightarrow \infty}\longrightarrow 0.
\end{equation}
Notice that
\begin{multline*}
p\big( \{\omega\in \Omega:  \sup_{n\in \Z^2, n\neq 0} \big( \langle n\rangle^{-\gamma} |g_n(\omega)|\big) \geq K\} \big)
\\
\leq 
\sum_{ n\in \Z^2, n\neq 0}\,
p\big(\{\omega \in \Omega : |g_n(\omega)| \geq K \langle n\rangle^{\gamma}\}\big).
\end{multline*}
It remains to observe that by gaussianity
$$
p\big(\{\omega\in \Omega : |g_n(\omega)| \geq K \langle n\rangle^{\gamma}\}\big)
<
 C e^{-c \langle n\rangle^{2\gamma} K^2}
 $$
 and we conclude \eqref{gammafree} by elementary considerations.
 
Next we introduce
$$
\Omega_K=\{\omega\in \Omega:  \sup_{n\in \Z^2, n\neq 0} \big( \langle n\rangle^{-\gamma} |g_n(\omega)|\big) < K\}
$$
and
$$\Sigma_1=\bigcup_{K=1}^\infty \Omega_K\,.
$$
Notice that \eqref{minkpao} holds for $\omega\in \Sigma_1$ for a suitable $K$, 
and moreover $\Sigma_1$ has full measure by \eqref{gammafree}.
\end{proof}
%%%%%%%
\begin{prop}\label{firstprob} Let $s\in (0,1)$ and $q\in (1,\infty)$ be fixed. There exists an event $\tilde \Sigma\subset \Omega$ such that
$p(\tilde \Sigma)=1$ and for every $\omega\in \tilde \Sigma$ there exists $C(\omega)<\infty$ such that:
\begin{equation}\label{quantepsilon}
\|Y_\varepsilon(x,\omega) -Y(x,\omega)\|_{W^{s,q}}<C(\omega) \varepsilon^\kappa
\end{equation}
for a suitable $\kappa>0$.
Moreover we have
\begin{equation}\label{expp}
\|e^{-pY_\varepsilon (x,\omega)}-e^{-pY(x,\omega)}\|_{L^\infty}<C(\omega) \varepsilon^\kappa.\end{equation}
\end{prop}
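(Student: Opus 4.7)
The plan is to first establish the moment bound $\E\|Y_\varepsilon-Y\|_{W^{s,q}}^q\lesssim \varepsilon^{q(1-s)}$ via a direct Gaussian computation, then promote it to an a.s.\ polynomial bound by Borel--Cantelli along the dyadic sequence $\varepsilon_N=2^{-N}$, and finally bridge to arbitrary $\varepsilon$ using the pointwise control of the Gaussians provided by Lemma~\ref{Q}. The exponential bound \eqref{expp} will follow from \eqref{quantepsilon} upon choosing $sq>2$, by Sobolev embedding and the mean value theorem.

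For the moment bound, writing
$$D^s(Y_\varepsilon - Y)(x) = -\sum_{n\neq 0} \langle n\rangle^s\,\frac{\rho(\varepsilon n) - 1}{|n|^2}\, g_n(\omega)\, e^{in\cdot x},$$
this is at each $x$ a centered complex Gaussian with variance $V_\varepsilon(x)=\sum_{n\neq 0}\langle n\rangle^{2s}|n|^{-4}|\rho(\varepsilon n)-1|^2$. Since $\rho=\hat\chi$ is smooth with $\rho(0)=1$, one has $|\rho(\varepsilon n)-1|\lesssim \min(\varepsilon|n|,1)$; splitting the series at $|n|\sim \varepsilon^{-1}$ and using $s\in(0,1)$ (on both the low-frequency part and the high-frequency tail) yields $V_\varepsilon(x)\lesssim \varepsilon^{2(1-s)}$ uniformly in $x$. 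Gaussianity then gives $\E|D^s(Y_\varepsilon-Y)(x)|^q\lesssim V_\varepsilon(x)^{q/2}$, and integrating in $x$ produces the claimed moment bound. Markov and Borel--Cantelli along $\varepsilon_N=2^{-N}$ then supply, for any $\kappa_0\in(0,1-s)$, an event of full measure on which $\|Y_{2^{-N}}-Y\|_{W^{s,q}}\leq C(\omega)\,2^{-N\kappa_0}$ for all $N$.

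For $\varepsilon\in(2^{-N-1},2^{-N}]$, the residual bound $\|Y_\varepsilon-Y_{2^{-N}}\|_{W^{s,q}}\leq C(\omega)\,2^{-N\kappa'}$ is obtained by intersecting with the event $\Sigma_1$ of Lemma~\ref{Q} and working pathwise: the mean value theorem combined with the Schwartz decay of $\nabla\rho$ gives $|\rho(\varepsilon n)-\rho(2^{-N}n)|\lesssim 2^{-N}|n|\,\langle 2^{-N}n\rangle^{-M}$ for any $M$, and together with $|g_n(\omega)|\leq K(\omega)\langle n\rangle^\gamma$ and the dyadic splitting at $|n|\sim 2^N$ this yields the claim (using Plancherel directly when $q\leq 2$, and passing through the Sobolev embedding $H^{s+\delta}\hookrightarrow W^{s,q}$ with $\delta$ slightly larger than $1-2/q$ when $q>2$, which is permitted because $\gamma$ may be taken arbitrarily small). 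To establish \eqref{expp}, I pick $s,q$ with $sq>2$ so that \eqref{quantepsilon} upgrades to $\|Y_\varepsilon-Y\|_{L^\infty}\leq C(\omega)\varepsilon^\kappa$; applying the moment computation with $\rho\equiv 1$ shows $\E\|Y\|_{W^{s,q}}^q<\infty$, hence $\|Y\|_{L^\infty}<\infty$ a.s. Writing $e^{-pY_\varepsilon}-e^{-pY}=e^{-pY}(e^{-p(Y_\varepsilon-Y)}-1)$ and using $|e^{-pa}-1|\leq |p||a|e^{|p||a|}$, the uniform boundedness of $\|Y\|_{L^\infty}$ and of the small factor $\|Y_\varepsilon-Y\|_{L^\infty}$ as $\varepsilon\to 0$ produces \eqref{expp}. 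The main technical obstacle is the deterministic bridging: one must carefully trade the polynomial growth allowance from Lemma~\ref{Q} against the Schwartz decay of $\rho$ so that a single positive exponent $\kappa=\min(\kappa_0,\kappa')$ governs both the dyadic tail and the within-dyadic gap.
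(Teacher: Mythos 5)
Your first step (the $q$-th moment bound via the pointwise Gaussian variance $V_\varepsilon(x)\lesssim\varepsilon^{2(1-s)}$, followed by Markov and Borel--Cantelli) and your derivation of \eqref{expp} from the $L^\infty$ version of \eqref{quantepsilon} are sound and essentially match the paper. The genuine gap is in the bridging step, and it is caused by your choice of the dyadic mesh $\varepsilon_N=2^{-N}$. Working pathwise on the event of Lemma~\ref{Q} with $|g_n(\omega)|\leq K\langle n\rangle^\gamma$ forces you to sum the moduli of the Fourier coefficients,
\[
\|Y_\varepsilon-Y_{2^{-N}}\|_{W^{s,q}}\lesssim K\sum_{n\neq 0}\frac{\big|\rho(\varepsilon n)-\rho(2^{-N}n)\big|}{\langle n\rangle^{2-s-\gamma}},
\]
and since $|\varepsilon-2^{-N}|$ is only of order $2^{-N}$ (not $2^{-2N}$), the mean value theorem gives $|\rho(\varepsilon n)-\rho(2^{-N}n)|\lesssim\min(2^{-N}|n|,1)$ with no extra smallness; the frequencies $|n|\sim 2^{N}$ alone then contribute about $2^{2N}\cdot 2^{N(s+\gamma-2)}=2^{N(s+\gamma)}$, so the sum \emph{diverges} as $N\to\infty$ instead of decaying. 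The Schwartz decay of $\nabla\rho$ only helps for $|n|\gg 2^{N}$ and cannot rescue this. Your proposed escape via Plancherel and the embedding $H^{s+\delta}\hookrightarrow W^{s,q}$ does not close the gap either: that embedding needs $\delta\geq 1-2/q$, while the $L^2$-based computation (and indeed the a.s.\ regularity of $Y$ itself, which lies in $H^\sigma$ only for $\sigma<1$) requires $s+\delta<1$, i.e.\ $sq<2$ --- precisely excluding the regime $sq>2$ that you must invoke to obtain the $L^\infty$ bound and hence \eqref{expp}.

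The paper's remedy is exactly at this point: it discretizes along $\varepsilon_N=N^{-1}$, for which consecutive gaps satisfy $|\varepsilon-\varepsilon_N|\lesssim N^{-2}=\varepsilon_N^2$; the same pathwise sum then becomes $N^{-2}\sum_{|n|\leq N}\langle n\rangle^{-1+s-\gamma+2\gamma}$ plus a rapidly decaying tail, which is $\lesssim N^{-1+s+\gamma}=\varepsilon_N^{1-s-\gamma}$ and decays for $\gamma$ small. The price of this denser mesh is that the Borel--Cantelli step needs probabilities summable over a polynomially spaced sequence, which the paper extracts from the Gaussian concentration $\|\cdot\|_{L^p(\Omega)}\lesssim\sqrt{p}\,N^{-1+s}$ via Lemma~4.5 of \cite{tz}; your single $q$-th moment plus Markov would also suffice there after possibly enlarging $q$ and using that $\T^2$ has finite measure. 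So the architecture of your proof is correct, but you must replace the dyadic sequence by a polynomially spaced one (or else control the supremum over each dyadic block by a genuinely probabilistic argument, which is not what you describe).
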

\begin{proof}
First we notice that by \eqref{quantepsilon} and Sobolev embedding
we have
a.s.
\begin{equation}\label{kapth}\|Y_\varepsilon(x,\omega) -Y(x,\omega)\|_{L^\infty}<C(\omega) \varepsilon^\kappa.
\end{equation}
Hence \eqref{expp} follows from the following computation
$$
|e^{-pY_\varepsilon (x,\omega)}-e^{-pY(x,\omega)}|\lesssim 
|Y_\varepsilon(x,\omega) -Y(x,\omega)| e^{|p| \sup_{\varepsilon} \|Y_\varepsilon (x,\omega)\|_{L^\infty}}$$
and by noticing that by \eqref{kapth} we have 
a.s. $\sup_{\varepsilon} \|Y_\varepsilon (x,\omega)\|_{L^\infty}<\infty$.\\

Next we split the proof of \eqref{quantepsilon} in two steps.\\
\\
{\em First step: proof of \eqref{quantepsilon} for $\varepsilon=\varepsilon_N=N^{-1}$}
\\
\\
For every $p\geq q$ we combine the Minkowski inequality and a standard
bound between the $L^p$ and $L^2$ norms of gaussians
in order to get:
\begin{multline}\label{minkowlas}
\|Y(x,\omega)-Y_{\varepsilon_N}(x,\omega)\|_{L^p(\Omega;W^{s,q})}
= 
\|D^s(Y(x,\omega)-Y_{\varepsilon_N}(x,\omega))\|_{L^p(\Omega;L^{q})}
\\\leq  
\|D^s(Y(x,\omega)-Y_{\varepsilon_N}(x,\omega))\|_{L^{q}(\T^2;L^p(\Omega))}\\
\lesssim \sqrt{p}\|D^s(Y(x,\omega)-Y_{\varepsilon_N}(x,\omega))\|_{L^{q}(\T^2;L^2(\Omega))}
\lesssim \sqrt{p}N^{-1+s}.
\end{multline} 
To justify the last inequality notice that by independence of $g_n(\omega)$ we have for every fixed $x\in \T^2$
the following estimates:
\begin{multline*}\|D^s(Y(x,\omega)-Y_{\varepsilon_N}(x,\omega))\|_{L^2(\Omega)}^2\lesssim
\sum_{|n|\leq N}
\frac{|1- \rho(\varepsilon_N n)|^2}{\langle n\rangle^{4-2s}}+
\sum_{|n|>N}
\frac{|1- \rho(\varepsilon_N n) |^2}{\langle n\rangle^{4-2s}}\\
\lesssim N^{-2}
\sum_{|n|\leq N}
\frac{|n|^2}{\langle n\rangle^{4-2s}}+
\sum_{|n|>N}
\frac{1}{\langle n\rangle^{4-2s}}
\lesssim  N^{-2+2s}
\end{multline*}
where we have used the following consequence of the mean value theorem
$$|1- \rho(\varepsilon_N n)|=|\rho(0)- \rho(\varepsilon_N n)|\lesssim \min\{(\sup |\nabla \rho|) |N^{-1} |n|,
(\sup |\rho|)\}.$$
Next by combining \eqref{minkowlas} and Lemma~4.5 of \cite{tz}, we can write
\begin{equation}\label{BC}
p\big(\{\omega\in \Omega: N^{\frac{1-s}2} \|Y(x,\omega)-
Y_{\varepsilon_N}(x,\omega)\|_{W^{s, q}}> 1\}\big) \leq C\, e^{-c N^{1-s }}
\end{equation} 
where the positive constants $c,C>0$ are independent of $N$. 
The right-hand side of \eqref{BC}  is summable in $N$. Therefore,  we can use the Borel-Cantelli lemma
 to conclude that  there exists a full measure set $\Sigma_2\subset \Omega$
 such that for every $\omega\in \Sigma_2$ there exists $N_0(\omega)\in \N$ such that
 $$
\|Y(x,\omega)-Y_{\varepsilon_N}(x,\omega)\|_{W^{s,q}}\leq N^{\frac{s-1}2}=
\varepsilon_N^{\frac{1-s}{2}},\quad \forall N>N_0(\omega)
$$
and hence 
\begin{equation}\label{discreteversion}
\|Y(x,\omega)-Y_{\varepsilon_N}(x,\omega)\|_{W^{s,q}}<C(\omega) \varepsilon_N^{\frac{1-s}{2}}, \quad \forall N\in \N.
\end{equation}
\\
\\
{\em Second step: proof of \eqref{quantepsilon} for $\varepsilon\in (0,1)$}
\\
\\
Let us set
$
\tilde \Sigma=\Sigma_1\cap \Sigma_2\
$ (where $\Sigma_2$ is given in the first step and $\Sigma_1$ in Lemma \ref{Q}),
then $p(\tilde \Sigma)=1$ and we will show that for every $\omega\in\tilde \Sigma$ we have the desired property.\\
For every $\varepsilon \in (0,1)$  there exists $N\in\N$ such that
\begin{equation}\label{epsilonN}
\varepsilon_{N+1}< \varepsilon\leq \varepsilon_N\
\end{equation}
where $\varepsilon_N=N^{-1}$ is as in the previous step.
We claim that 
\begin{equation}\label{papmam}
\|Y_\varepsilon (x,\omega)-Y_{\varepsilon_N}(x,\omega)\|_{W^{s,q}}
< C(\omega)\varepsilon^{1-s-\gamma}
, \quad \forall \omega\in \Sigma_1
\end{equation}
for every $\gamma>0$. Once the estimate above  is established then
the proof of \eqref{quantepsilon} for $\omega\in \tilde \Sigma$ follows by recalling \eqref{discreteversion}
and the Minkowski inequality:
\begin{multline*}
\|Y_\varepsilon (x,\omega)-Y(x,\omega)\|_{W^{s,q}}
\leq \|Y_\varepsilon (x,\omega)-Y_{\varepsilon_N}(x,\omega)\|_{W^{s,q}}\\
+\|Y_{\varepsilon_N} (x,\omega)-Y(x,\omega)\|_{W^{s,q}}<
C(\omega)\varepsilon^{1-s-\gamma}
 + C(\omega) \varepsilon_N^{\frac{1-s}{2}}\\
 <C(\omega)(\varepsilon^{1-s-\gamma}
 + \varepsilon^{\frac{1-s}{2}}).
 \end{multline*}
Hence we get \eqref{quantepsilon} for $\kappa=\frac{1-s}2$ provided that we choose $\gamma>0$ small enough.\\
 
Next we prove \eqref{papmam}. Due to \eqref{minkpao} and the Minkowski inequality for 
every $\omega\in\Sigma_1$ 
we have some $K>0$ such that:
\begin{equation}\label{timaj}
\|Y_\varepsilon(x,\omega)-Y_{\varepsilon_N}(x,\omega)\|_{W^{s,q}}
\lesssim K
\sum_{n\in\Z^2}
\frac{\big|
\rho\big(\varepsilon n\big) - \rho\big(\varepsilon_N n\big) \big|}{\langle n\rangle^{2-s-\gamma}}\,.
\end{equation}
%Thanks to the rapid decay of $\nabla \rho$ and by recalling \eqref{epsilonN}, for every $L>0$ 
%there exists a constant $C_L>0$ such that 
%$$
%\sum_{|n|>N}
%\frac{|
%\rho (\varepsilon n) - \rho(\varepsilon_N n)|}{\langle n\rangle^{2-s-\varepsilon}}
%\leq C_L N^{L-2}\sum_{|n|>N}
%\frac{1}{|n|^{L-1}\langle n\rangle^{2-s-\varepsilon}} , \quad \forall L>0.
%$$
On the other hand by the mean value theorem and \eqref{epsilonN} we get
\begin{equation*}
\big|
\rho\big(\varepsilon n\big) - \rho\big(\varepsilon_N n\big) \big|
\lesssim |n|N^{-2} \sup_{t\in [0,1]} |\nabla \rho \big((t\varepsilon_N +(1-t) \varepsilon) n\big)|,
\end{equation*}
therefore by using the rapid decay of $\nabla \rho$ we get 
for every $L>0$ the following bound
\begin{equation}\label{meanvaluecomplete}
\big|
\rho\big(\varepsilon n\big) - \rho\big(\varepsilon_N n\big) \big|
\lesssim  |n|N^{-2} \min \{(\sup|\nabla \rho|), |n|^{-L}N^L\}.
\end{equation}
We can now estimate the r.h.s. in \eqref{timaj} as follows:
\begin{multline*}
\sum_{|n|\leq N}
\frac{\big|
\rho\big(\varepsilon n\big) - \rho\big(\varepsilon_N n\big) \big|}{\langle n\rangle^{2-s-\gamma}}
+
\sum_{|n|>N}
\frac{|
\rho (\varepsilon n) - \rho(\varepsilon_N n)|}{\langle n\rangle^{2-s-\gamma}}
\\
\lesssim N^{-2}
\sum_{|n|\leq N}
\frac{1 }{\langle n\rangle^{1-s-\gamma}}+ N^{L-2}\sum_{|n|>N}
\frac{1}{|n|^{L-1}\langle n\rangle^{2-s-\gamma}}
\lesssim N^{-1+s+\gamma}
\end{multline*}
%for a suitable $\alpha>0$ where at the last step we have choosen $\delta>0$ small enough.  
Hence going back to \eqref{timaj}
we get
$$
\|Y_\varepsilon (x,\omega)-Y_{\varepsilon_N}(x,\omega)\|_{W^{s,q}}
< C(\omega)N^{-1+s+\gamma}=C(\omega)\varepsilon_N^{1-s-\gamma}
, \quad \forall \omega\in\Sigma_1
$$
and we conclude \eqref{papmam} since we are assuming \eqref{epsilonN}.
\end{proof}
%%%
\begin{prop}\label{secondprob}
Let $q\in (1,\infty)$ be fixed. There exists an event $\tilde \Sigma\subset \Omega$ such that
$p(\tilde \Sigma)=1$ and for every $\omega\in \tilde \Sigma$ there exists $C(\omega)<\infty$ such that:
\begin{equation}\label{quantepsilonsec}
\|\nabla Y_\varepsilon(x,\omega)\|_{L^{q}}<C(\omega) |\ln \varepsilon|
\end{equation}
and
\begin{equation}\label{quantepsilonsecnew}
\|:|\nabla Y_\varepsilon(x,\omega)|^2:\|_{L^{q}}<C(\omega) |\ln \varepsilon|^2.
\end{equation}
\end{prop}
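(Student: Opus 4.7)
I follow the two-step strategy of Proposition~\ref{firstprob}: first establish both bounds on the discrete sequence $\varepsilon_N=1/N$ via Borel--Cantelli, and then transfer them to all $\varepsilon\in(0,1)$ by a deterministic interpolation based on Lemma~\ref{Q}. At the level of the second moment in $\omega$, independence of $(g_n)$ gives, at each fixed $x\in\T^2$, the bound $\|\nabla Y_\varepsilon(x,\omega)\|_{L^2(\Omega)}^2\lesssim\sum_{n\neq 0}\rho^2(\varepsilon n)/|n|^2\lesssim |\log\varepsilon|$, whereas for the Wick--renormalised quadratic object the orthogonality in the Wiener chaos together with the expansion underlying \eqref{:nabla:} yields
\[
\|:|\nabla Y_\varepsilon|^2:(x,\omega)\|_{L^2(\Omega)}^2\lesssim \sum_{\substack{n_1,n_2\neq 0\\ n_1\neq n_2}}\frac{\rho^2(\varepsilon n_1)\,\rho^2(\varepsilon n_2)}{|n_1|^2\,|n_2|^2}\lesssim |\log\varepsilon|^2.
\]

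\textbf{Discrete estimates.} Interchanging $L^p(\Omega)$ with $L^q(\T^2)$ by Minkowski and invoking the standard Gaussian moment bound in the first case, respectively its second-chaos version from hypercontractivity (which replaces $\sqrt{p}$ by $p$) in the second case, one obtains
\[
\|\nabla Y_\varepsilon\|_{L^p(\Omega;L^q)}\lesssim \sqrt{p}\,|\log\varepsilon|^{1/2},\qquad \|:|\nabla Y_\varepsilon|^2:\|_{L^p(\Omega;L^q)}\lesssim p\,|\log\varepsilon|.
\]
An application of Lemma~4.5 of~\cite{tz}, exactly as in \eqref{BC}, converts these into the deviation bounds
\[
p\bigl(\|\nabla Y_{\varepsilon_N}\|_{L^q}>K|\log N|\bigr)\lesssim e^{-cK^2|\log N|},\qquad p\bigl(\|:|\nabla Y_{\varepsilon_N}|^2:\|_{L^q}>K|\log N|^2\bigr)\lesssim e^{-cK|\log N|},
\]
both of which are summable in $N$ once $K$ is large enough, so Borel--Cantelli produces a full-probability event on which the desired bounds hold at every $\varepsilon_N$.

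\textbf{Passage to continuous $\varepsilon$.} For $\varepsilon\in(\varepsilon_{N+1},\varepsilon_N]$, the interpolation error is estimated on the event $\Sigma_1$ of Lemma~\ref{Q} by the mean value theorem for $\rho$, in the spirit of \eqref{timaj}--\eqref{meanvaluecomplete}. For the quadratic object I use the real-valued factorisation $|\nabla Y_\varepsilon|^2-|\nabla Y_{\varepsilon_N}|^2=(\nabla Y_\varepsilon-\nabla Y_{\varepsilon_N})\cdot(\nabla Y_\varepsilon+\nabla Y_{\varepsilon_N})$, H\"older's inequality in $L^{2q}$ and the discrete bounds just obtained, while $|C_\varepsilon-C_{\varepsilon_N}|$ is deterministic and controlled by $\sum_n|\rho^2(\varepsilon n)-\rho^2(\varepsilon_N n)|/|n|^2$.

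\textbf{Expected main obstacle.} In contrast with Proposition~\ref{firstprob}, the target here is merely logarithmic in $\varepsilon^{-1}$, while $\nabla Y$ sits exactly at the borderline Sobolev regularity; consequently the naive interpolation from Lemma~\ref{Q} at the grid $\varepsilon_N=1/N$ loses a positive power of $N$ rather than a negative one. The remedy I would adopt is to refine the grid inside each macro-interval $(\varepsilon_{N+1},\varepsilon_N]$ at spacing $N^{-A}$ with $A>2+\gamma$, for which the deterministic interpolation error is of order $N^{-(A-\gamma-2)}$; the resulting union bound over the $\lesssim N^{A-2}$ extra grid points costs only a polynomial factor in $N$, which is absorbed by the stretched-exponential decay of the deviation estimates after enlarging $K$. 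The rest of the argument then proceeds verbatim as in the previous paragraph.
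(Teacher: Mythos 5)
Your proposal is correct and follows the same skeleton as the paper (Minkowski plus Gaussian moment bounds plus Lemma~4.5 of \cite{tz} and Borel--Cantelli on a discrete grid, then a deterministic passage to all $\varepsilon$ via Lemma~\ref{Q}), but it resolves the two delicate points differently. First, the borderline interpolation loss that you correctly identify is handled in the paper not by refining the grid inside each interval $(\varepsilon_{N+1},\varepsilon_N]$ and paying a polynomial union-bound cost, but by choosing the slower grid $\varepsilon_N=N^{-\delta}$ with $\delta$ small from the outset: then consecutive grid points are $O(N^{-1-\delta})$ apart while the relevant frequencies only reach $N^{\delta}$, so the deterministic error is $O(N^{-1-\delta}N^{\delta(2+\gamma)})$, a negative power of $N$ for $\delta,\gamma$ small. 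The two devices are essentially equivalent (both amount to a polynomially fine grid with polynomially many points, absorbed by the stretched-exponential tails), and yours is a legitimate fix; the paper's is marginally cleaner since it needs only one level of grid. Second, for \eqref{quantepsilonsecnew} the paper does not rerun the chaos-two Borel--Cantelli argument at all: it simply writes $\|:|\nabla Y_\varepsilon|^2:\|_{L^q}\leq\|\nabla Y_\varepsilon\|_{L^{2q}}^2+C_\varepsilon$ and uses \eqref{quantepsilonsec} with $q$ replaced by $2q$ together with $C_\varepsilon\lesssim|\ln\varepsilon|$. Your hypercontractivity route (and the factorisation $|\nabla Y_\varepsilon|^2-|\nabla Y_{\varepsilon_N}|^2=(\nabla Y_\varepsilon-\nabla Y_{\varepsilon_N})\cdot(\nabla Y_\varepsilon+\nabla Y_{\varepsilon_N})$ for the continuous passage) is correct but redundant here; the genuinely second-chaos analysis is only needed for the $W^{-s,q}$ convergence in Proposition~\ref{firstprobthird}, where no such pointwise reduction is available.
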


\begin{proof} 
It is easy to deduce \eqref{quantepsilonsecnew} from \eqref{quantepsilonsec}. In fact
we have the following trivial estimate
$$\|:|\nabla Y_\varepsilon(x,\omega)|^2:\|_{L^{q}}
\leq \||\nabla Y_\varepsilon(x,\omega)|\|_{L^{2q}}^2
 +C_\varepsilon< C(\omega) |\ln \varepsilon|^2 +C_\varepsilon$$
 and we conclude by noticing that $C_\varepsilon\lesssim |\ln \varepsilon|$. Next, we focus on the proof of \eqref{quantepsilonsec} that 
we split in two steps.\\
\\
{\em First step: proof of \eqref{quantepsilonsec} for $\varepsilon=\varepsilon_N=N^{-\delta}$, $\delta\in (0,1)$}
\\
\\
Once again, by combining the Minkowski inequality and a standard bound between the $L^p$ and $L^2$ norms of gaussians we get for $p\geq q$:
\begin{multline}\label{minkow}
\|\nabla Y_{\varepsilon_N}(x,\omega)\|_{L^p(\Omega;L^{q})}
\leq  
\|\nabla Y_{\varepsilon_N}(x,\omega)\|_{L^{q}(\T^2;L^p(\Omega))}
\\\lesssim \sqrt{p}\|\nabla Y_{\varepsilon_N}(x, \omega)\|_{L^{q}(\T^2;L^2(\Omega))}
\lesssim \sqrt{p |\ln \varepsilon_N|}
\end{multline} 
The last step follows since, by orthogonality of $g_n(\omega)$, for every $x\in \T^2$ we have:
\begin{multline*}\|\nabla Y_{\varepsilon_N}(x,\omega)\|_{L^{2}(\Omega)}^2\lesssim \sum_{n\in \Z^2, n\neq 0}
\frac{|\rho\big(\varepsilon_N n)|^2}{|n|^2}
\\\lesssim \sum_{n\in \Z^2, 0<|n|<N}
\frac{1}{|n|^2}
+ N^{2\delta L} \sum_{n\in \Z^2, |n|\geq N} \frac1{|n|^{2+2L}}
\lesssim |\ln N| +N^{2L(\delta-1)}
\lesssim |\ln \varepsilon_N|
\end{multline*}
where $L>0$ is any number and we used the fast decay of $\rho$.
Using \eqref{minkow}, we obtain that 
$$
F(\omega)=|\ln \varepsilon_N|^{-1/2}\|\nabla Y_{\varepsilon_N}(x,\omega)\|_{L^{q}}
$$
satisfies 
$
\|F\|_{L^p(\Omega)}\leq C\sqrt{p}
$
and using Lemma~4.5 of \cite{tz} (with $N=1$, according to the notations of \cite{tz}),
%\footnote {Applied to $F(\omega)=|\ln \varepsilon_N|^{-1/2}\|\nabla Y_{\varepsilon_N}(x,\omega)\|_{L^{q}}$ and $N=1$ with the notations of \cite{tz}.}
we can write
\begin{equation}\label{BCnew}
p\big(\{\omega\in \Omega: |\ln \varepsilon_N|^{-1}\|\nabla
Y_{\varepsilon_N}(x,\omega)\|_{L^{q}}> \lambda\}\big) \leq Ce^{-c|\ln \varepsilon_N| \lambda^2}
\end{equation} 
where the positive constants $c,C>0$ are independent of $N$ and $\lambda>0$ is 
chosen 
large enough in such a way that 
the right-hand side of \eqref{BCnew}  is summable in $N$. Therefore,  we can use the Borel-Cantelli lemma
 to conclude that  there exists a full measure set $\Sigma_2\subset \Omega$
 such that for every $\omega\in \Sigma_2$ there exists $N_0(\omega)\in \N$ such that
$$\|\nabla
Y_{\varepsilon_N}(x,\omega)\|_{L^{q}}\leq \lambda |\ln \varepsilon_N|, \quad \forall N>N_0(\omega)$$
and hence for every $\omega \in \Sigma_2$ we have
$$\|\nabla
Y_{\varepsilon_N}(x,\omega)\|_{L^{q}}\leq C(\omega) |\ln \varepsilon_N|, \quad \forall N\in \N.$$
\\
{\em Second step: proof of \eqref{quantepsilonsec} for $\varepsilon\in (0,1)$}
\\
\\
Exactly as in the proof of Proposition \ref{firstprob}
it is sufficient to estimate 
\begin{equation}\label{lezbiom}
\|Y_{\varepsilon}(x,\omega)-Y_{\varepsilon_N}(x,\omega)\|_{L^{q}}<C(\omega) |\ln \varepsilon|,
\end{equation}
where 
\begin{equation}\label{;;;;}
\varepsilon_{N+1}< \varepsilon \leq \varepsilon_N
\end{equation} with $\varepsilon_N=N^{-\delta}$, provided that 
$\omega$ belongs to the event $\Sigma_1$ given in Lemma \ref{Q} and  $\delta>0$ is small enough .
By Lemma \ref{Q}  we have that  for every $\omega\in \Sigma_1$ there exists a constant $K>0$ such that
\begin{equation}\label{;;;}
\|\nabla Y_{\varepsilon}(x,\omega)-\nabla Y_{\varepsilon_N}(x,\omega)\|_{L^{q}}\lesssim
K
\sum_{n\in\Z^2}
\frac{\big|
\rho\big(\varepsilon n\big) - \rho\big(\varepsilon_N n\big) \big|}{\langle n\rangle^{1-\gamma}}\,.
\end{equation}
Next notice that by combining the mean value theorem  with the strong decay of $\rho$ 
we get for every fixed $L>0$:
\begin{multline*}
\big|
\rho\big(\varepsilon n\big) - \rho\big(\varepsilon_N n\big) \big|
\lesssim |n|N^{-1-\delta} \sup_{t\in [0,1]} |\nabla \rho \big((t\varepsilon_N +(1-t) \varepsilon) n\big)|
\\
\lesssim |n|N^{-1-\delta} \min \{(\sup|\nabla \rho|), |n|^{-L}N^{\delta L}\}.
\end{multline*}
Then we can estimate
\begin{multline*}
\sum_{n\in\Z^2}
\frac{\big|
\rho\big(\varepsilon n\big) - \rho\big(\varepsilon_N n\big) \big|}{\langle n\rangle^{1-\gamma}}
\lesssim N^{-1-\delta} \sum_{n\in \Z^2, 0<|n|<N^\delta}
\langle n\rangle^{\gamma}
+ N^{-1-\delta+\delta L} \sum_{n\in\Z^2, |n|\geq N^\delta}
\langle n\rangle^{\gamma-L}
\\
\lesssim N^{-1-\delta}  N^{\delta(2+\gamma)} + N^{-1-\delta+\delta L} N^{\delta(\gamma - L+2)}
=2N^{-1-\delta}  N^{\delta(2+\gamma)} 
\end{multline*}
and hence  by \eqref{;;;} we get
$$\|Y_{\varepsilon}(x,\omega)-Y_{\varepsilon_N}(x,\omega)\|_{L^{q}}<C(\omega) 
\varepsilon_N^\alpha$$ for a suitable $\alpha>0$, 
provided that we choose $\delta, \gamma>0$ small enough. 
By \eqref{;;;;} we get
$$
\|\nabla Y_{\varepsilon}(x,\omega)- \nabla Y_{\varepsilon_N}(x,\omega)\|_{L^{q}}<C(\omega) 
\varepsilon^\alpha$$ 
and in particular we get \eqref{lezbiom}.
%where we have used \eqref{meanvalue}, the estimate $|\varepsilon-\varepsilon_N|=O(N^{-2})$
%and $\gamma>0$ is small enough. 

\end{proof}

\begin{prop}\label{firstprobthird} Let $s\in (0,1)$ and $q\in (1,\infty)$ be fixed. There exists an event $\tilde \Sigma\subset \Omega$ such that
$p(\tilde \Sigma)=1$ and for every $\omega\in \tilde \Sigma$ there exists $C(\omega)<\infty$ such that
\begin{equation}\label{quantepsilon:}
\|:|\nabla Y_{\varepsilon}|^2:(x,\omega) -:|\nabla Y|^2:(x,\omega)\|_{W^{-s,q}}<C(\omega) \varepsilon^\kappa
\end{equation}
for a suitable $\kappa>0$. 
%In particular it is well-defined  the following limit in the topology $W^{-s,q}$:
%$$:|\nabla Y_{\varepsilon_2}|^2:(x,\omega) \overset{\varepsilon\rightarrow 0}
%\longrightarrow :|\nabla Y|^2:(x,\omega), \quad \forall \omega\in \tilde \Sigma,
%$$
%as well as the following bound
%$$\|:|\nabla Y_\varepsilon|^2:(x,\omega) - :|\nabla Y|^2:(x,\omega) \|_{W^{-s,q}}< C(\omega) \varepsilon^\kappa,
%\quad \forall \omega \in \tilde \Sigma.$$
\end{prop}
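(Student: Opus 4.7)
My plan is to mirror the two-step strategy already used in Propositions~\ref{firstprob} and \ref{secondprob}: first establish the bound on a polynomial subsequence $\varepsilon_N=N^{-\delta}$ by a Wiener chaos + Borel--Cantelli argument, then interpolate to a continuous parameter $\varepsilon$ using the polynomial growth bound on $g_n$ provided by Lemma~\ref{Q}. The key difference from the proof of \eqref{quantepsilon} is that $:|\nabla Y_\varepsilon|^2:$ sits in the \emph{second} order Wiener chaos, so the Gaussian moment bound $\|\cdot\|_{L^p}\lesssim \sqrt p\,\|\cdot\|_{L^2}$ must be replaced by the hypercontractive bound $\|\cdot\|_{L^p}\lesssim p\,\|\cdot\|_{L^2}$ (still sub-Gaussian enough for Borel--Cantelli).

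For the discrete step I would decompose
$$:|\nabla Y_\varepsilon|^2:(x,\omega)-:|\nabla Y|^2:(x,\omega) = A_\varepsilon(x,\omega)+B_\varepsilon(x,\omega),$$
where $A_\varepsilon$ is the off-diagonal double sum obtained by inserting the factor $\rho(\varepsilon n_1)\rho(\varepsilon n_2)-1$ in \eqref{:nabla:} over $n_1\neq n_2$, and $B_\varepsilon$ is the diagonal series $\sum_{n\neq 0}(\rho^2(\varepsilon n)-1)(|g_n(\omega)|^2-1)/|n|^2$. For each fixed $x$ the orthogonality of the Hermite-type random variables $g_{n_1}\overline{g_{n_2}}$ (with $n_1\neq n_2$) and $|g_n|^2-1$ reduces the $L^2(\Omega)$ norm of the $k$-th spatial Fourier coefficient to an explicit kernel bound. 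Using $|1-\rho(\varepsilon n_i)|\lesssim \min(1,\varepsilon|n_i|)$ and summing, one checks that at the mode $k=n_1-n_2$ one gains a power $\varepsilon^{2\kappa_0}$ for some $\kappa_0>0$ (uniformly in $k$, with enough decay in $\langle k\rangle$ to make the resulting $W^{-s,q}$ sum convergent, because $\nabla Y_\varepsilon\in W^{-s',q}$ for any $s'>0$). Applying Minkowski to swap $L^p(\Omega)$ and $L^q(\T^2)$, then the second-chaos hypercontractivity bound, yields
$$\|A_{\varepsilon_N}+B_{\varepsilon_N}\|_{L^p(\Omega;W^{-s,q})}\lesssim p\,\varepsilon_N^{\kappa_0}.$$
A Chebyshev/Markov type argument in the spirit of Lemma~4.5 of \cite{tz} then gives a stretched-exponential tail bound, and Borel--Cantelli on the subsequence $\varepsilon_N=N^{-\delta}$ produces a full-measure event $\Sigma_2$ on which \eqref{quantepsilon:} holds along $\varepsilon_N$ with some exponent $\kappa_1\in(0,\kappa_0)$.

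To pass from $\varepsilon_N$ to arbitrary $\varepsilon\in(\varepsilon_{N+1},\varepsilon_N]$ I would, as in Proposition~\ref{firstprob}, estimate $:|\nabla Y_\varepsilon|^2:-:|\nabla Y_{\varepsilon_N}|^2:$ deterministically on the event $\Sigma_1$ of Lemma~\ref{Q}: bound $|g_n|\lesssim \langle n\rangle^\gamma$ for a tiny $\gamma>0$, use the mean value theorem and the rapid decay of $\rho$ to estimate $|\rho(\varepsilon n_1)\rho(\varepsilon n_2)-\rho(\varepsilon_N n_1)\rho(\varepsilon_N n_2)|$, and sum against $\langle n_1-n_2\rangle^{-s}$ (dualizing if necessary to reduce $W^{-s,q}$ to an $\ell^1$ bound on Fourier modes with a small loss $\gamma$). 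Choosing $\delta,\gamma>0$ small enough one obtains $\|:|\nabla Y_\varepsilon|^2:-:|\nabla Y_{\varepsilon_N}|^2:\|_{W^{-s,q}}\lesssim_\omega \varepsilon^{\kappa_2}$ on $\Sigma_1$. Setting $\tilde\Sigma=\Sigma_1\cap\Sigma_2$ and using the triangle inequality gives \eqref{quantepsilon:} with $\kappa=\min(\kappa_1,\kappa_2)$. The main technical obstacle is the careful bookkeeping in the double sum defining $A_\varepsilon$: one has to combine the $\varepsilon$-gain coming from $\rho(\varepsilon n_i)-1$ with enough decay in $\langle n_1-n_2\rangle$ to guarantee convergence in $W^{-s,q}$, and this is where the negative regularity $-s<0$ is essential.
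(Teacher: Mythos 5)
Your proposal follows essentially the same route as the paper's proof: the same decomposition into the off-diagonal double sum and the diagonal series, the same Minkowski-plus-second-chaos-hypercontractivity bound leading to Borel--Cantelli along $\varepsilon_N=N^{-\delta}$, and the same deterministic interpolation to continuous $\varepsilon$ on the event of Lemma~\ref{Q} via the mean value theorem and the rapid decay of $\rho$. The only detail worth making explicit is the one the paper carries out: the $\varepsilon$-gain on the off-diagonal sum is obtained by interpolating $|\rho(\varepsilon n_1)\rho(\varepsilon n_2)-1|\lesssim(|n_1|+|n_2|)N^{-\delta}$ with the trivial bound, at exponent $\theta$ subject to $2\theta<s$ so that the kernel $\langle n_1-n_2\rangle^{-2s}|n_1|^{-2+2\theta}|n_2|^{-2+2\theta}$ is summable by discrete Young.
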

\begin{proof}
We split again the proof of \eqref{quantepsilon:} in two steps.
\\
\\
{\em First step: proof of \eqref{quantepsilon:} for $\varepsilon=\varepsilon_N=N^{-\delta}$, $\delta\in (0,1)$}
\\
\\
Notice that
\begin{multline*}
:|\nabla Y_{\varepsilon_N}|^2:(x,\omega) - :|\nabla Y|^2:(x,\omega)
\\
= \sum_{\substack{(n_1,n_2)\in\Z^4\\n_1\neq 0, n_2\neq 0\\n_1\neq n_2}}\,  
\big[\rho\big (\frac{n_1}{N^\delta}\big )  \rho
\big (\frac{n_2}{N^\delta}\big )
-1\big] 
\frac{n_1\cdot n_2}{|n_1|^2 |n_2|^2}\,
g_{n_1}(\omega)\overline{g_{n_2}(\omega)}\,e^{i(n_1-n_2)\cdot x}
\\+\sum_{n\in\Z^2,n\neq 0}\,
\big[\rho^2(\frac{n}{N^\delta})-1]\frac{|g_n(\omega)|^2 -1}{|n|^2}.
\end{multline*}
For every $p\geq q$, by the Minkowski inequality and 
using hypercontractivity (see \cite{Sim}) to estimate 
the $L^p$ norm of a bilinear form of the gaussian vector $(g_n)$, we get:
\begin{multline}\label{:est}
\|:|\nabla Y_{\varepsilon_N}|^2:(x,\omega) - :|\nabla Y|^2:(x,\omega)\|_{L^p(\Omega;W^{-s,q})}
\\= 
\|D^{-s}(:|\nabla Y_{\varepsilon_N}|^2:(x,\omega) - :|\nabla Y|^2:(x,\omega))\|_{L^p(\Omega;L^{q})}
\\\leq  
\|D^{-s}(:|\nabla Y_{\varepsilon_N}|^2:(x,\omega) - :|\nabla Y|^2:(x,\omega))\|_{L^{q}(\T^2;L^p(\Omega))}\\
\lesssim p\|D^{-s}(:|\nabla Y_{\varepsilon_N}|^2:(x,\omega) - :|\nabla Y|^2:(x,\omega))\|_{L^{q}(\T^2;L^2(\Omega))}.
\end{multline}
Now, we observe that for a complex gaussian $g$ and two nonnegative integers $k_1$, $k_2$, we have that  $\E(g^{k_1}\bar{g}^{k_2})=0$, unless $k_1=k_2$.
Therefore, by  the independence of $(g_n)$, modulo $g_n=\overline{g_{-n}}$, for every fixed $x\in \T^2$, we can write:
\begin{multline*}
\|D^{-s}(:|\nabla Y_{\varepsilon_N}|^2:(x,\omega) - :|\nabla Y|^2:(x,\omega))\|_{L^2(\Omega)}^2\\
\lesssim \sum_{\substack{(n_1,n_2)\in\Z^4\\n_1\neq 0, n_2\neq 0\\n_1\neq n_2}}\,  
\big[\rho\big (\frac{n_1}{N^\delta}\big )  \rho
\big (\frac{n_2}{N^\delta}\big )
-1\big]^2
\frac{|n_1\cdot n_2|^2}{\langle n_1-n_2 \rangle^{2s}|n_1|^4 |n_2|^4}\,
\\+ \sum_{n\in\Z^2,n\neq 0}\,
\big[\rho^2\big (\frac{n}{N^\delta}\big )-1\big]^2\frac{1}{|n|^4}
=I+II,
\end{multline*}
where the implicite constant is independent of $x$. 
%where $C=\|g_n(\omega)|^2 -1\|_{L^2(\Omega)}^2$.
Concerning $II$ we notice first that by the mean value theorem 
$$
\big|\rho^2\big (\frac{n}{N^\delta}\big )-1\big |=\big|\rho^2\big (\frac{n}{N^\delta}\big )-\rho^2(0))\big | \lesssim |n| 
N^{-\delta}$$
and by interpolation with the trivial bound
$$\big|\rho^2\big (\frac{n}{N^\delta}\big )-1\big | \lesssim 1$$
we get
$$\big|\rho^2\big (\frac{n}{N^\delta}\big )-1\big | \lesssim |n|^\theta N^{-\delta\theta}, \quad \theta \in (0,1).$$
Then we can estimate 
%and also
%$$\big|\rho^2\big (\frac{n}{N^\delta}\big )-\rho^2\big (\frac{n}{M^\delta}
%\big)\big |\lesssim 1.$$
$$
II\lesssim N^{-2\delta\theta}
\sum_{n\in\Z^2,n\neq 0}\,
%\sum_{|n|\leq N^\delta} 
\frac{1}{|n|^{4-2\theta}}\lesssim N^{-2\delta\theta} .
$$
Next we estimate $I$.
First notice that 
\begin{equation*}
\rho\big(\frac{n_1}{N^\delta}\big)  \rho
\big ( \frac{n_2}{N^\delta}\big )
-1
=
\rho\big(\frac{n_1}{N^\delta}\big)  \big[\rho
\big ( \frac{n_2}{N^\delta}\big )-\rho(0)\big]
+\big[\rho\big(\frac{n_1}{N^\delta}\big)  
-\rho(0)\big]  \rho
\big (0)
\end{equation*}
and hence by the mean value theorem we get
\begin{equation}\label{meanvalue}
\big| \rho\big(\frac{n_1}{N^\delta}\big)  \rho
\big ( \frac{n_2}{N^\delta}\big )
-1\big|\lesssim (|n_1|+|n_2|) N^{-\delta}.
\end{equation}
that by interpolation with the trivial bound
$$\big| \rho\big(\frac{n_1}{N^\delta}\big)  \rho
\big ( \frac{n_2}{N^\delta}\big )
-1\big|\lesssim 1$$
implies
\begin{equation*}
\big| \rho\big(\frac{n_1}{N^\delta}\big)  \rho
\big ( \frac{n_2}{N^\delta}\big )
-1\big|\lesssim (|n_1|+|n_2|)^\theta N^{-\delta\theta},
\quad \theta\in (0,1).
\end{equation*}
Hence we can evaluate $I$ as follows:
\begin{multline*}
I\lesssim N^{-2\delta\theta}
\sum_{\substack{(n_1,n_2)\in\Z^4\\n_1\neq 0, n_2\neq 0}} \frac{(|n_1|+|n_2|)^{2\theta}}{\langle n_1-n_2 \rangle^{2s}|n_1|^2 |n_2|^2}
\\
\lesssim N^{-2\delta\theta}
\sum_{\substack{(n_1,n_2)\in\Z^4\\n_1\neq 0, n_2\neq 0}} \frac{1}{\langle n_1-n_2 \rangle^{2s}|n_1|^{2-2\theta} 
|n_2|^{2-2\theta}}\lesssim N^{-2\delta\theta},
\end{multline*}
where we have used 
$$\sum_{\substack{(n_1,n_2)\in\Z^4\\n_1\neq 0, n_2\neq 0}} \frac{1}{\langle n_1-n_2 \rangle^{2s}|n_1|^{2-2\theta} 
|n_2|^{2-2\theta}}<\infty$$
which in turn follows from the discrete Young inequality provided that
$2\theta<s$.
Going back to \eqref{:est} we get that for $\delta\in (0,1)$ and $\theta\in (0,\frac s2)$ one has the bound 
\begin{equation*}
\|:|\nabla Y_{\varepsilon_N}|^2:(x,\omega) - :|\nabla Y|^2:(x,\omega)\|_{L^p(\Omega;W^{-s,q})}
\lesssim p N^{-\delta\theta}\,.
%, \quad \delta\in (0,1), \quad \theta\in (0,\frac s2).
\end{equation*}
This estimate,  in conjunction with Lemma~4.5 of \cite{tz} implies
\begin{equation*}
%\label{BCnew}
p\big(\{\omega\in \Omega: N^{\frac{\delta \theta}2}
\|:|\nabla Y_{\varepsilon_N}|^2:(x,\omega) - :|\nabla Y|^2:(x,\omega)\|_{W^{-s,q}}> 1\}\big) 
\leq Ce^{-c N^{\frac{\delta \theta}2}}
%=Ce^{-c \big (\frac {N^\frac \delta 2}{N^{\delta}}-\frac {N^\frac \delta 2}{M^{\delta}}\big)^{-2\theta} N^{-\delta \theta}}
\end{equation*}  
where the positive constants $c,C>0$ are independent of $N$. 
Since the r.h.s. is summable in $N$ we can apply the Borel-Cantelli lemma and deduce
the existence of an event $\Sigma_2$ with full measure and such that
for every $\omega \in \Sigma_2$ there exists $N_0(\omega)>0$ with the property
$$\|:|\nabla Y_{\varepsilon_N}|^2:(x,\omega) - :|\nabla Y|^2:(x,\omega)\|_{W^{-s,q}}
\leq N^{-\frac{\delta \theta}2}=\varepsilon_N^{\frac \theta 2}, \quad \forall\, N>N_0(\omega)$$
and hence
$$\|:|\nabla Y_{\varepsilon_N}|^2:(x,\omega) - :|\nabla Y|^2:(x,\omega)\|_{W^{-s,q}}
< C(\omega) \varepsilon_N^{\frac{\theta}2}, \quad \forall\, N\in \N.$$
%\begin{multline*}
%I\lesssim N^{-\delta} \sum_{\substack{(n_1,n_2)\in\Z^4\\n_1\neq 0, n_2\neq 0\\n_1\neq n_2, 
%\max\{|n_1|, |n_2|\}\leq N^\delta}} \frac{|n_1|+|n_2|}{\langle n_1-n_2 \rangle^{2s}|n_1|^2 |n_2|^2}
%\\+  \sum_{\substack{(n_1,n_2)\in\Z^4\\n_1\neq 0, n_2\neq 0\\n_1\neq n_2, 
%\max\{|n_1|, |n_2|\}\geq N^\delta}} \frac{1}{\langle n_1-n_2 \rangle^{2s}|n_1|^2 |n_2|^2}\\
%\lesssim  N^{-\delta} \ln N 
%+  \sum_{\substack{(n_1,n_2)\in\Z^4\\n_1\neq 0, n_2\neq 0\\n_1\neq n_2, 
%\max\{|n_1|, |n_2|\}\geq N^\delta}} \frac{1}{\langle n_1-n_2 \rangle^{2s}|n_1|^2 |n_2|^2}
%\end{multline*}
%and for the last sum we have
%\begin{multline*}\sum_{\substack{(n_1,n_2)\in\Z^4\\n_1\neq 0, n_2\neq 0\\n_1\neq n_2, 
%\max\{|n_1|, |n_2|\}\geq N^\beta}} \frac{1}{\langle n_1-n_2 \rangle^{2s}|n_1|^2 |n_2|^2}
%\\\lesssim N^{-\delta\alpha} \big( \sum_{\substack{(n_1,n_2)\in\Z^4\\n_1\neq 0, n_2\neq 0\\n_1\neq n_2}} \frac{1}{\langle n_1-n_2 \rangle^{2s}|n_1|^2 |n_2|^{2-\beta}}
%+\sum_{\substack{(n_1,n_2)\in\Z^4\\n_1\neq 0, n_2\neq 0\\n_1\neq n_2}} 
%\frac{1}{\langle n_1-n_2 \rangle^{2s}|n_1|^{2-\beta} |n_2|^2}\big)
%\end{multline*}
%for a suitable $\beta>0$ that one can choose in such a way that
%$$\sum_{\substack{(n_1,n_2)\in\Z^4}} 
%\frac{1}{\langle n_1-n_2 \rangle^{2s}|n_1|^{2-\beta} |n_2|^2}<\infty.$$
%Going back to \eqref{:est} we get
%$$\|:|\nabla Y_{\varepsilon_N}|^2:(x,\omega) - :|\nabla Y_{\varepsilon_{M}}|^2:(x,\omega)\|_{L^p(\Omega;W^{-s,q})}
%\lesssim \sqrt p N^{-\alpha}, \quad \alpha>0.$$
\\
\\
{\em Second step: proof of \eqref{quantepsilon:} for $\varepsilon\in (0,1)$}
\\
\\
We consider a generic $\varepsilon>0$ and we select $N$ in such a way that
$\varepsilon_{N+1}<\varepsilon\leq \varepsilon_N$ where $\varepsilon_N=N^{-\delta}$
as in the first step.
By the Minkowski inequality and the previous step it is sufficient 
to prove
\begin{equation}\label{...}\|:|\nabla Y_{\varepsilon_N}|^2:(x,\omega) - :|\nabla Y_{\varepsilon}|^2:(x,\omega)\|_{W^{-s,q}}
<C(\omega)N^{-\alpha}\end{equation}
for a suitable $\alpha>0$, for $\omega\in \Sigma_1$
where 
$\Sigma_1$ is given in Lemma \ref{Q}.
By Lemma \ref{Q} we deduce that almost surely there exists a finite constant $K>0$
such that \eqref{minkpao} occurs.
Then we have
\begin{multline*}
\|:|\nabla Y_{\varepsilon_N}|^2:(x,\omega) - :|\nabla Y_{\varepsilon}|^2:(x,\omega)\|_{W^{-s,q}}
\\
\lesssim K^2 \sum_{\substack{(n_1,n_2)\in\Z^4\\n_1\neq 0, n_2\neq 0}}
\frac{
\big|\rho(\varepsilon_N n_1)  \rho
(\varepsilon_N n_2)
-\rho(\varepsilon n_1)  \rho
(\varepsilon n_2)\big|}
{\langle n_1-n_2\rangle^s|n_1|^{1-\gamma} |n_2|^{1-\gamma}}.
%\\+K^2\sum_{n\in\Z^2,n\neq 0}\, \frac{
%\big[\rho^2(\varepsilon_N n)-\rho^2(\varepsilon n)\big ]}{|n|^{2-2\gamma}}
\end{multline*}
By looking at the argument to prove \eqref{meanvalue} we get:
\begin{equation}\label{meanvaluenew}
\big| \rho(\varepsilon_N n_1)  \rho
( \varepsilon_N n_2)
-\rho(\varepsilon  n_1)  \rho
(\varepsilon n_2)  \big|\lesssim (|n_1|+|n_2|) (\varepsilon_N - \varepsilon)\end{equation}
and hence we can continue the estimate above as follows
\begin{multline}\label{....}
\|:|\nabla Y_{\varepsilon_N}|^2:(x,\omega) - :|\nabla Y_{\varepsilon}|^2:(x,\omega)\|_{W^{-s,q}}
\\
\lesssim 
K^2 (\varepsilon_N - \varepsilon) \sum_{\substack{(n_1,n_2)\in\Z^4\\n_1\neq 0, n_2\neq 0
\\\max\{|n_1|, |n_2|\}\leq N^{2\delta}}}
\frac{|n_1|+|n_2|}{{\langle n_1-n_2\rangle^s|n_1|^{1-\gamma} |n_2|^{1-\gamma}}}
\\
+
K^2  \sum_{\substack{(n_1,n_2)\in\Z^4\\n_1\neq 0, n_2\neq 0
\\\min \{|n_1|, |n_2|\}\geq N^{2\delta}}}
\frac{
\big|\rho(\varepsilon_N n_1)  \rho
(\varepsilon_N n_2)
-\rho(\varepsilon n_1)  \rho
(\varepsilon n_2)\big|}
{\langle n_1-n_2\rangle^s|n_1|^{1-\gamma} |n_2|^{1-\gamma}}
\\
+
K^2  \sum_{\substack{(n_1,n_2)\in\Z^4\\n_1\neq 0, n_2\neq 0
\\\min\{|n_1|, |n_2|\}\leq N^{2\delta}\\
\max\{|n_1|, |n_2|\}\geq N^{2\delta}}}
\frac{
\big|\rho(\varepsilon_N n_1)  \rho
(\varepsilon_N n_2)
-\rho(\varepsilon n_1)  \rho
(\varepsilon n_2)\big|}
{\langle n_1-n_2\rangle^s|n_1|^{1-\gamma} |n_2|^{1-\gamma}}.
\end{multline}
Concerning the first sum on the r.h.s. in \eqref{....} (notice that by symmetry we can assume $|n_1|<|n_2|$) we can estimate as follows:
\begin{multline*}
(\varepsilon_N - \varepsilon) \sum_{\substack{(n_1,n_2)\in\Z^4\\n_1\neq 0, n_2\neq 0
\\|n_1|\leq |n_2|\leq N^{2\delta}}}
\frac{|n_2|}
{\langle n_1-n_2\rangle^s|n_1|^{1-\gamma} |n_2|^{1-\gamma}}
\\
\lesssim N^{-1-\delta} \sum_{\substack{(n_1,n_2)\in\Z^4\\n_1\neq 0, n_2\neq 0
\\|n_1|\leq |n_2|\leq N^{2\delta}}}
|n_2|^\gamma
%{|n_1|^{1-\gamma}}
\lesssim N^{-1-\delta} N^{4\delta} N^{2\delta \gamma}. 
%N^{2\delta(1+\gamma)}N^{2\delta(2+\gamma)}.
\end{multline*}
For the second sum on the r.h.s. in \eqref{....} we use the fast decay of $\rho$ and hence for every $L>0$ we have
\begin{multline*}
\sum_{\substack{(n_1,n_2)\in\Z^4\\n_1\neq 0, n_2\neq 0
\\\min \{|n_1|, |n_2|\}\geq N^{2\delta}}}
\frac{
\big|\rho(\varepsilon_N n_1)  \rho
(\varepsilon_N n_2)
-\rho(\varepsilon n_1)  \rho
(\varepsilon n_2)\big|}
{\langle n_1-n_2\rangle^s|n_1|^{1-\gamma} |n_2|^{1-\gamma}}
\\\lesssim N^{2\delta L}\sum_{\substack{(n_1,n_2)\in\Z^4\\n_1\neq 0, n_2\neq 0
\\\min \{|n_1|, |n_2|\}\geq N^{2\delta}}}
\frac{1}
{|n_1|^{1-\gamma+L} |n_2|^{1-\gamma+L}}
\\\lesssim N^{2\delta L} N^{{2\delta}(\gamma -L+1)}N^{{2\delta}(\gamma -L+1)}
\lesssim N^{4\delta \gamma-2\delta L+4\delta}.
\end{multline*}
By using again the fast decay of $\rho$ we can estimate the third sum on the r.h.s. in \eqref{....}
as follows (we can assume by symmetry $|n_1|<N^{2\delta}\leq |n_2|$):
\begin{multline*}\sum_{\substack{(n_1,n_2)\in\Z^4\\n_1\neq 0, n_2\neq 0
\\|n_1|<N^{2\delta}\leq |n_2|}}
\frac{
\big|\rho(\varepsilon_N n_1)  \rho
(\varepsilon_N n_2)
-\rho(\varepsilon n_1)  \rho
(\varepsilon n_2)\big|}
{\langle n_1-n_2\rangle^s|n_1|^{1-\gamma} |n_2|^{1-\gamma}}
\\
\lesssim {N^{\delta L}}
\sum_{\substack{(n_1,n_2)\in\Z^4\\n_1\neq 0, n_2\neq 0
\\|n_1|<N^{2\delta}\leq |n_2|}}
\frac{1}
{|n_1|^{1-\gamma} |n_2|^{1-\gamma+L}}
\lesssim {N^{\delta L}} N^{2\delta(\gamma+1)} N^{2\delta(\gamma -L+1)}
\lesssim N^{-\delta L+4\delta \gamma+4\delta }.
\end{multline*}
The proof of \eqref{...} is now complete provided we choose $L$ large and $\delta,\gamma$ small enough.
\end{proof}
We complete this section by noticing that the analysis we performed here may allow the extension of the results of \cite{OPT} to a continuous family of approximation problems. 
\section{Some useful facts}
Next we provide a result that will be useful in the sequel. The proof is inspired by \cite{DW}, nevertheless we provide a proof for the sake of completeness.
From a technical viewpoint the minor difference is that our proof involves Sobolev spaces, while the one in \cite{DW} uses  H\"older spaces.
\begin{prop}\label{ercol10} Let $v_\varepsilon(t,x, \omega)$ be as in Theorem \ref{main}. Then
there exists an event $\Sigma\subset \Omega$ such that
$p(\Sigma)=1$, $\Sigma\subset \Sigma_0$ where $\Sigma_0$ is the event in Proposition 
\ref{specialrandom} and for every $\omega \in \Sigma$ there exists a finite constant $C(\omega)>0$ such that:
\begin{equation}\label{ercol2}
\sup_{\varepsilon\in (0,1),  t\in\R} \|v_\varepsilon(t,x,\omega)\|_{H^1}<C(\omega)
\end{equation}
and
\begin{equation}\label{H2}
\|e^{Y(x,\omega)}u_0(x)\|_{H^2}<C(\omega).
\end{equation}
\end{prop}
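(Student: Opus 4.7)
The plan is to combine mass and renormalized-Hamiltonian conservation for \eqref{NLSgaugeintronew} with the uniform $\varepsilon$-bounds of Proposition~\ref{specialrandom}. First I would set
\[
\Sigma := \Sigma_0 \cap \{\omega\in\Omega : e^{Y(\cdot,\omega)} u_0 \in H^2(\T^2)\},
\]
which has full measure by the hypothesis of Theorem~\ref{main} together with Proposition~\ref{specialrandom}. Then \eqref{H2} is immediate, and on $\Sigma$ there exists a random constant $C(\omega)$ simultaneously controlling $\|e^{\pm Y_\varepsilon}\|_{L^\infty}$, $\|e^{\pm Y_\varepsilon}\|_{W^{s,q}}$ and $\|:|\nabla Y_\varepsilon|^2:\|_{W^{-s,q}}$ uniformly in $\varepsilon\in (0,1)$, for any fixed $s\in(0,1)$ and $q\in(1,\infty)$.

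Multiplying \eqref{NLSgaugeintronew} by $\bar v_\varepsilon e^{-2Y_\varepsilon}$ and taking the imaginary part, an integration by parts that combines the Laplacian with the drift term $-2\nabla v_\varepsilon\cdot\nabla Y_\varepsilon$ into a manifestly real expression yields the mass identity $\tfrac{d}{dt}\int |v_\varepsilon|^2 e^{-2Y_\varepsilon}\,dx = 0$, hence $\|v_\varepsilon(t)\|_{L^2}\leq C(\omega)\|v_0\|_{L^2}$ uniformly in $t,\varepsilon$. The same gauge substitution used in the introduction, applied to the conserved Hamiltonian $\int(|\nabla u_\varepsilon|^2 - \xi_\varepsilon|u_\varepsilon|^2)\,dx + \tfrac{2\lambda}{p+2}\int|u_\varepsilon|^{p+2}\,dx$ of the original equation \eqref{NLSwhite_tris}, combined with the splitting $|\nabla Y_\varepsilon|^2 = :|\nabla Y_\varepsilon|^2: + C_\varepsilon$ and mass conservation, shows that
\[
\mathcal{H}_\varepsilon(v) := \int_{\T^2}\bigl(|\nabla v|^2 - :|\nabla Y_\varepsilon|^2: |v|^2\bigr) e^{-2Y_\varepsilon}\,dx + \frac{2\lambda}{p+2}\int_{\T^2}|v|^{p+2}\,e^{-(p+2)Y_\varepsilon}\,dx
\]
is conserved along \eqref{NLSgaugeintronew}. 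Since $\lambda\leq 0$ makes the nonlinear term nonnegative, one obtains
\[
\int_{\T^2}|\nabla v_\varepsilon|^2 e^{-2Y_\varepsilon}\,dx \;\leq\; |\mathcal{H}_\varepsilon(v_0)| + \Bigl|\int_{\T^2} :|\nabla Y_\varepsilon|^2: |v_\varepsilon|^2 e^{-2Y_\varepsilon}\,dx\Bigr|.
\]

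The hard part is to estimate the last integral without losing powers of $|\ln\varepsilon|$: since $:|\nabla Y_\varepsilon|^2:$ is merely a distribution and its $L^q$ norms diverge like $|\ln\varepsilon|^2$, a bare Hölder bound would destroy uniformity. I would instead pair via duality in $W^{\pm s,2}$ with $s\in(0,1)$ small:
\[
\Bigl|\int :|\nabla Y_\varepsilon|^2: |v_\varepsilon|^2 e^{-2Y_\varepsilon}\Bigr| \;\leq\; \|:|\nabla Y_\varepsilon|^2:\|_{W^{-s,2}}\,\||v_\varepsilon|^2 e^{-2Y_\varepsilon}\|_{H^s}.
\]
Fractional Leibniz together with the uniform $L^\infty\cap W^{s,q}$ bounds on $e^{-2Y_\varepsilon}$ reduce the right factor to $\||v_\varepsilon|^2\|_{H^s}\lesssim \|v_\varepsilon\|_{L^4}\|v_\varepsilon\|_{W^{s,4}}$, and the two-dimensional Sobolev embeddings $H^{1/2}\hookrightarrow L^4$ and $H^{s+1/2}\hookrightarrow W^{s,4}$, combined with interpolation between $L^2$ and $H^1$, yield
\[
\||v_\varepsilon|^2 e^{-2Y_\varepsilon}\|_{H^s} \;\leq\; C(\omega)\,\|v_\varepsilon\|_{L^2}^{1-s}\,\|v_\varepsilon\|_{H^1}^{1+s}.
\]
Because $1+s<2$, a weighted Young's inequality turns $C(\omega)\|v_\varepsilon\|_{L^2}^{1-s}\|v_\varepsilon\|_{H^1}^{1+s}$ into $\delta\|v_\varepsilon\|_{H^1}^2 + C_\delta(\omega)\|v_\varepsilon\|_{L^2}^2$; taking $\delta$ small enough, absorbing $\delta\|v_\varepsilon\|_{H^1}^2$ into the left-hand side of the energy identity, and controlling $|\mathcal{H}_\varepsilon(v_0)|$ by the same Leibniz argument (which is finite thanks to $v_0\in H^2\hookrightarrow H^1\cap L^{p+2}$ and the bounds on $\Sigma$), one obtains $\|v_\varepsilon(t)\|_{H^1}\leq C(\omega)$ uniformly in $t\in\R$ and $\varepsilon\in(0,1)$, which is \eqref{ercol2}.
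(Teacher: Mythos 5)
Your argument is essentially the paper's own proof: the same choice of $\Sigma$, the same use of mass conservation and of the conserved renormalized energy, and the same duality/fractional-Leibniz/interpolation treatment of the term $\int :|\nabla Y_\varepsilon|^2:|v_\varepsilon|^2e^{-2Y_\varepsilon}$ producing a power of $\|v_\varepsilon\|_{H^1}$ strictly below $2$ that is absorbed by Young's inequality (the paper fixes $s=\tfrac12$ and gets $\|v_\varepsilon\|_{L^2}^{1/2}\|v_\varepsilon\|_{H^1}^{3/2}$, you keep $s$ small and get $\|v_\varepsilon\|_{L^2}^{1-s}\|v_\varepsilon\|_{H^1}^{1+s}$; both work). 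One sign slip to fix: the nonlinear term of the conserved energy should read $-\tfrac{2\lambda}{p+2}\int_{\T^2}|v|^{p+2}e^{-(p+2)Y_\varepsilon}$, as in \eqref{hamiltonian}, which is the quantity that is nonnegative for $\lambda\leq 0$ and can therefore be discarded as you intend.
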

\begin{proof}
We introduce the event $\Sigma$ as follows
\begin{equation}\label{defsigma}
\Sigma=\{\omega\in \Omega: e^{Y(x,\omega)}u_0(x)\in H^2\} \cap \Sigma_0
\end{equation}
where $\Sigma_0$ is the event provided by Proposition~\ref{specialrandom}.\\

We now state the fundamental conservation laws satisfied by $v_{\varepsilon}$.  We have the mass conservation 
\begin{equation}\label{mass}
\frac d{dt}  \int_{\T^2} e^{-2Y_\varepsilon} |v_\varepsilon (t)|^2 =0
\end{equation}
and the energy conservation
\begin{equation}\label{hamiltonian}
\frac d{dt} \Big( \int_{\T^2} e^{-2Y_\varepsilon } 
\big(|\nabla v_\varepsilon (t)|^2 
-:|\nabla Y_\varepsilon|^2: |v_\varepsilon (t)|^2 -\frac {2\lambda}{p+2}  e^{-pY_\varepsilon}  |v_\varepsilon(t)|^{p+2}
\big)
\Big) =0.
\end{equation}
Of course, the conservation laws \eqref{mass} and \eqref{hamiltonian} give the key global information in our analysis. 
By using \eqref{mass}  and  Propositions~\ref{specialrandom},  we get
\begin{multline}\label{L2}
\int_{\T^2} |v_\varepsilon (t)|^2 
\leq \|e^{2Y_\varepsilon}\|_{L^\infty}\int_{\T^2} e^{-2Y_\varepsilon} |v_\varepsilon (t)|^2 
< C(\omega) \int_{\T^2} e^{-2Y_\varepsilon} |v(0)|^2 
\\
< C(\omega) \|e^{-2Y_\varepsilon}\|_{L^\infty} \int_{\T^2} |v(0)|^2 
<C(\omega) \int_{\T^2} |v(0)|^2 .
\end{multline}
In order to control $\|\nabla v_\varepsilon (t) \|_{L^2}$ we first notice that
by duality and by Lemma 2.2 in \cite{OPT} (see also \cite{GKO} and the references therein), and by using Proposition
\ref{specialrandom} we get for $s\in (0,1)$:
\begin{multline*}\big |\int_{\T^2} e^{-2Y_\varepsilon} :|\nabla Y_\varepsilon|^2: |v_\varepsilon (t)|^2 \big |
\leq \|:|\nabla Y_\varepsilon|^2: \|_{W^{-s, q}} \| e^{-2Y_\varepsilon} |v_\varepsilon (t)|^2 \|_{W^{s,q'}}
\\
< C(\omega) (\|e^{-2Y_\varepsilon}\|_{W^{s,2}} 
\||v_\varepsilon (t)|^2\|_{L^r} + \|e^{-2Y_\varepsilon}\|_{L^{l}}
\||v_\varepsilon (t)|^2\|_{W^{s,\frac 32}})
\\<C(\omega) 
(\|v_\varepsilon (t) \|_{L^{2r}}^2 + \|v_\varepsilon (t)\|_{L^m}\|v_\varepsilon (t)\|_{H^s}),
\end{multline*}
where $\frac 1{q'}=\frac 12 + \frac 1{r}=\frac 2{3} + \frac 1{l}$, $\frac 23=\frac 1m+ \frac 12$ and $q<\infty$ is large enough. 
We now fix $s=\frac{1}{2}$. By using now interpolation and the Sobolev embedding we get
\begin{eqnarray*}
\big |\int_{\T^2} e^{-2Y_\varepsilon} :|\nabla Y_\varepsilon|^2: |v_\varepsilon (t)|^2\big |
& <  & C(\omega)
\|v_\varepsilon (t) \|_{H^{\frac{1}{2}}} \|v_\varepsilon (t) \|_{H^{1}}
\\
& < & 
 C(\omega)   \|v_\varepsilon (t) \|_{L^{2}}^{\frac{1}{2}}  \|v_\varepsilon (t) \|_{H^{1}}^{\frac{3}{2}}\,.
 \end{eqnarray*} 
By combining this estimate with  \eqref{hamiltonian} (and by using that we are assuming $\lambda\leq 0$) we get
\begin{multline*}\int_{\T^2} e^{-2Y_\varepsilon} |\nabla v_\varepsilon (t)|^2 
\lesssim \| e^{-Y_\varepsilon} \nabla v_\varepsilon (0)\|_{L^2}^2
\\+
C(\omega)   (\|v_\varepsilon (t) \|_{L^{2}}^{\frac{1}{2}}  \|v_\varepsilon (t) \|_{H^{1}}^{\frac{3}{2}}
+\|v_\varepsilon (0) \|_{L^{2}}^{\frac{1}{2}}  \|v_\varepsilon (0) \|_{H^{1}}^{\frac{3}{2}})
+   \|e^{-Y_\varepsilon} v_\varepsilon(0)\|_{L^{p+2}}^{p+2}
\end{multline*}
which in turn by interpolation, Sobolev embedding and \eqref{L2} implies
\begin{multline*}
\int_{\T^2} ( |\nabla v_\varepsilon (t)|^2 + |v_\varepsilon (t)|^2) 
\lesssim  C(\omega) 
[{\mathcal P}(\|v_\varepsilon (0) \|_{L^{2}})  \|v_\varepsilon (t) \|_{H^{1}}^{\frac{3}{2}}
+{\mathcal P}(\|v_\varepsilon (0) \|_{H^{1}})]
\end{multline*}
where ${\mathcal P}$ denotes a polynomial function and we have used Proposition~\ref{specialrandom} to estimate a.s.  $\sup_{\varepsilon\in (0,1)} \|e^{-Y_\varepsilon}\|_{L^\infty}<C(\omega)$. 
%Notice that, by following the computations above,
%the random constant
%$\mathcal K_\varepsilon$ can be expressed as combination of the random variables
%provided in Proposition \ref{specialrandom} and hence by looking at the definition of $\Sigma$ in \eqref{defsigma}
%we get from the previous estimate:
We therefore have the bound 
$$\int_{\T^2} ( |\nabla v_\varepsilon (t)|^2 + |v_\varepsilon (t)|^2) 
\lesssim C(\omega)  \|v_\varepsilon (t) \|_{H^{1}}^{\frac{3}{2}}+C(\omega),
$$
where the random constant $C(\omega)$ is finite for every $\omega\in \Sigma$. We conclude the proof  by the classical Young inequality. 
\end{proof}
%%%%%%
In the sequel we shall need suitable versions of the Gronwall lemma. Although they are very classical
we prefer to state them, in particular we emphasize how the estimates depend 
from the constants involved. We also mention that the estimates below are implicitely used in \cite{DW},
however for the sake of clarity we prefer to give below the precise statements that we need.
\begin{prop}\label{gronwlog}
Let $f(t)$ be a  non-negative real valued function such that  for $t\in [0, \infty)$:
$$f(t) \leq A + B \int_0^t f(s) \ln (C+ f(s))ds,$$
where $A, B, C\in (1, \infty)$.
Then we have the following upper bound
$$f(t)\leq (A+C)^{e^{Bt}}\,. $$
\end{prop}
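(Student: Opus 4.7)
The plan is the classical trick for logarithmic Gronwall inequalities: replace the function by the right-hand side (which is an absolutely continuous majorant), turn the integral inequality into a differential inequality, and then separate variables. I would proceed as follows.

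First, set
\[
F(t):=A+B\int_0^t f(s)\ln(C+f(s))\,ds,
\]
so that $f(t)\leq F(t)$ for every $t\geq 0$ by hypothesis, and $F$ is non-decreasing with $F(0)=A$. Since the map $x\mapsto x\ln(C+x)$ is non-decreasing on $[0,\infty)$ (here I use $C>1$ so that the logarithm is positive for $x\geq 0$), one gets $f(s)\ln(C+f(s))\leq F(s)\ln(C+F(s))$, hence
\[
F'(t)=Bf(t)\ln(C+f(t))\leq BF(t)\ln(C+F(t)).
\]

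Next, let $G(t):=C+F(t)$. Then $G(0)=A+C>1$, $G$ is non-decreasing, $F\leq G$, and $\ln(C+F)=\ln G$, so the previous estimate yields the differential inequality
\[
G'(t)\leq BG(t)\ln G(t),\qquad t\geq 0.
\]
Because $G(t)\geq A+C>1$, we have $\ln G(t)>0$ and we may divide both sides by $G(t)\ln G(t)$. Integrating on $[0,t]$ gives
\[
\ln\ln G(t)-\ln\ln G(0)\leq Bt,
\]
whence, exponentiating twice,
\[
G(t)\leq G(0)^{\,e^{Bt}}=(A+C)^{\,e^{Bt}}.
\]
Since $f(t)\leq F(t)\leq G(t)$, this proves the claim.

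The argument has essentially no obstacle: the only points to check carefully are (i) the monotonicity of $x\mapsto x\ln(C+x)$, which needs $C\geq 1$, and (ii) the positivity of $\ln G$, which needs $G>1$, guaranteed by $A+C>1$. Both are ensured by the hypothesis $A,B,C\in(1,\infty)$. The argument is robust enough to accommodate variants (for instance, if $A$ depends on $t$ in a monotone way, or if $B$ is replaced by an integrable function), which is the sort of flexibility the authors will exploit later in the paper.
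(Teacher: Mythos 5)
Your proof is correct and follows essentially the same route as the paper: majorize by the right-hand side, reduce to the differential inequality $G'\leq BG\ln G$ with $G(0)=A+C>1$, and integrate $\frac{d}{dt}\ln\ln G\leq B$. The only cosmetic difference is that the paper absorbs the constant $C$ into the integrand (bounding $C+f(s)\leq F(s)$ inside $(C+f)\ln(C+f)$) rather than adding $C$ after defining the majorant, but the two computations are equivalent.
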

%%%%
\begin{proof} Notice that by assumption
\begin{equation}\label{gghhkk}
C+f(t)\leq A +C+ B \int_0^t (C+f(s)) \ln (C+ f(s))ds:=F(t)\,.
\end{equation}
Therefore
$$
F'(t)= B (C+f(t))\ln (C+f(t))\leq B F(t)\ln F(t)\,.
$$
Hence 
$$
\frac{d}{dt}\big(
\ln \ln (F(t))
\big)\leq B
$$
which implies after integration between $0$ and $t$ :
$$
\ln\ln (F(t))\leq \ln\ln(A+C)+Bt\,.
$$
By taking twice the exponential we obtain
$$F(t)\leq (A+C)^{e^{Bt}}.$$
Coming back to \eqref{gghhkk}, we get the needed bound. 
\end{proof}
%%%%%%%
\begin{prop}\label{gronwclass}
Let $f(t)$ be a  non-negative real valued function for $t\in [0, \infty)$, such that:
$$f'(t)\leq A +B f(t), \quad
f(0)=0$$
where $A, B\in (0, \infty)$.
Then we have the following upper bound:
%$$\ln f(t)\leq \ln 2 + \ln A -\ln B + Bt$$
$$f(t)\leq  AB^{-1} e^{Bt}.$$
\end{prop}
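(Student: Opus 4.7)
The plan is to use the classical integrating factor trick. Multiplying the differential inequality $f'(t) \leq A + B f(t)$ by $e^{-Bt}$ rearranges it into
$$
\frac{d}{dt}\bigl(e^{-Bt} f(t)\bigr) = e^{-Bt}\bigl(f'(t) - B f(t)\bigr) \leq A\, e^{-Bt}.
$$
This is the standard move for linear first-order inequalities, and it is valid provided $f$ is differentiable (which is the hypothesis of the proposition); otherwise one would need to work with an integral form and a density argument, but here we may proceed directly.

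Next I would integrate this inequality on $[0,t]$, using the initial condition $f(0)=0$, to obtain
$$
e^{-Bt} f(t) \leq A \int_0^t e^{-Bs}\, ds = \frac{A}{B}\bigl(1 - e^{-Bt}\bigr).
$$
Multiplying back by $e^{Bt}$ yields $f(t) \leq A B^{-1}(e^{Bt} - 1)$, and since $1 - e^{-Bt} \leq 1$, the stated bound $f(t) \leq AB^{-1} e^{Bt}$ follows immediately.

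There is essentially no obstacle here: the proof is the textbook linear Gronwall argument, and the only thing to be slightly careful about is the sign convention and the nonnegativity of $f$, both of which are supplied by the hypotheses. The statement is a routine companion to Proposition~\ref{gronwlog}, recorded explicitly to make the dependence on the constants $A$ and $B$ transparent for later use.
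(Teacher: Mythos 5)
Your proof is correct and is essentially identical to the paper's: both multiply by the integrating factor $e^{-Bt}$, integrate on $[0,t]$ using $f(0)=0$ to get $f(t)\leq AB^{-1}e^{Bt}(1-e^{-Bt})$, and then drop the factor $1-e^{-Bt}\leq 1$. No differences worth noting.
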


\begin{proof}
We notice that
$\frac d{dt}(e^{-Bt} f(t))\leq Ae^{-Bt}$
and hence
$$f(t)\leq A e^{Bt} \int_0^t e^{-Bs} ds=\frac{A}{B} e^{Bt} (1-e^{-Bt})\leq \frac{A}{B} e^{Bt}.$$
%As a consequence we get
%$$\ln f(t)\leq \ln 2+ \ln A -\ln B + Bt$$
%and we conclude by taking the exponential.
\end{proof}
%%%%%%%%%%%%%%%%%%%%%%%
\section{Modified energy for the gauged NLS on $\T^2$ and $H^2$ a-priori bounds}
In the sequel $v_\varepsilon(t,x,\omega)$ will denote the unique solution to:
\begin{multline}\label{NLSgauge}
i\partial_t v_\varepsilon  = \Delta v_\varepsilon  - 2\nabla v_\varepsilon  \cdot \nabla Y_\varepsilon + v_\varepsilon :|\nabla Y_\varepsilon |^2:
+\lambda  e^{-pY_\varepsilon}  v_\varepsilon|v_\varepsilon|^p,\,\,
\\
v_\varepsilon(0,x)=v_0(x)\in H^2,
\end{multline}
where $\lambda\in \R$, $p\geq 2$.
\begin{prop}\label{modifen} 
We have the identity
$$
\frac d{dt} ({\mathcal E}_{\varepsilon}(v_\varepsilon )[t])= \lambda \mathcal H_{\varepsilon}(v_\varepsilon)[t],
$$
where
\begin{equation*}
{\mathcal E}_{\varepsilon}(v_\varepsilon)[t]={\mathcal F}_{\varepsilon}(v_\varepsilon)[t]
+ \lambda{\mathcal G}_{\varepsilon}(v_\varepsilon)[t]
\end{equation*}
and the energies ${\mathcal F}_{\varepsilon}, {\mathcal G}_{\varepsilon}, \mathcal H_{\varepsilon}$
are defined as follows on a generic time dependent function
$w(t,x)$. The kinetic energy is defined by
\begin{eqnarray*}
{\mathcal F}_{\varepsilon}(w)[t] = \int_{\T^2} |\Delta w(t)|^2 e^{-2Y_\varepsilon}
-4 \Re\int_{\T^2} \Delta w(t) \nabla Y_\varepsilon \cdot \nabla \bar w(t) e^{-2Y_\varepsilon}
\\
+4 \sum_{i=1}^2 \int_{\T^2}  (|\partial_i w(t)|^2) (\partial_i Y_\varepsilon)^2 e^{-2Y_\varepsilon}
+8  \Re \int_{\T^2}  \partial_1 Y_\varepsilon \partial_2 Y_\varepsilon  \partial_1 w(t) \partial_2 \bar w(t) e^{-2Y_\varepsilon}
\\
+2 \Re \int_{\T^2} w(t) :|\nabla Y_\varepsilon|^2:  \nabla \bar w(t) \cdot \nabla (e^{-2Y_\varepsilon})
\\
+2 \Re \int_{\T^2} \Delta w(t) \bar w(t) :|\nabla Y_\varepsilon|^2: e^{-2Y_\varepsilon}
+ \int_{\T^2}  |w(t)|^2 (:|\nabla Y_\varepsilon|^2:)^2 e^{-2Y_\varepsilon}\,\,.
\end{eqnarray*}
The potential energy is defined by
\begin{multline*}
{\mathcal G}_{\varepsilon}(w)[t] = 
- \int_{\T^2} |\nabla w(t)|^2|w(t)|^p e^{-(p+2)Y_\varepsilon}
-2 \Re \int_{\T^2} w(t) \nabla (|w(t)|^p) \cdot \nabla \bar w(t)  e^{-(p+2)Y_\varepsilon}
\\
+\frac p4 \int_{\T^2} |\nabla(|w(t)|^2)|^2|w(t)|^{p-2}  e^{-(p+2)Y_\varepsilon}
\\
+\frac {2 }{p+2} \int_{\T^2}  |w(t)|^{p+2} :|\nabla Y_\varepsilon|^2:  e^{-(p+2)Y_\varepsilon}\,\,
+2p\Re \int_{\T^2} w(t) |w(t)|^p \nabla Y_{\varepsilon}  \cdot \nabla \bar w(t) e^{-(p+2)Y_{\varepsilon}}\,.
\end{multline*}
Finally, the lack of exact conservation is measured by the functional 
\begin{multline*}
\mathcal H_{\varepsilon}(w)[t] = 
-\int_{\T^2} |\nabla w(t)|^2 \partial_t (|w(t)|^p)  e^{-(p+2)Y_\varepsilon}
\\
-2\Re \int_{\T^2} \partial_t w(t)\nabla (|w(t)|^p) \cdot \nabla \bar w (t) 
 e^{-(p+2)Y_\varepsilon}
\\
-\frac p4 \int_{\T^2} |\nabla(|w(t)|^2)|^2\partial_t (|w(t)|^{p-2})
e^{-(p+2)Y_\varepsilon} \\
+2p\Re \int_{\T^2}\partial_t (w(t) |w(t)|^p) 
\nabla Y_\varepsilon \cdot \nabla \bar w(t) e^{-(p+2)Y_\varepsilon}\, . 
\end{multline*}
\end{prop}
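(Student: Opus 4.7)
The plan is to recognize $\mathcal{F}_\varepsilon$ as the square of a weighted $L^2$ norm of a self-adjoint operator applied to $v_\varepsilon$, so that the linear contribution to its time derivative vanishes automatically; the remaining nonlinear contribution will then be absorbed into $\tfrac{d}{dt}\mathcal{G}_\varepsilon$ plus the genuine remainder $\mathcal{H}_\varepsilon$ by repeated Leibniz and integration by parts.

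Define the gauged Laplacian $Lw := \Delta w - 2\nabla w\cdot \nabla Y_\varepsilon + w :|\nabla Y_\varepsilon|^2:$, i.e.\ the linear part of \eqref{NLSgauge}. Expanding $|Lw|^2$ and integrating by parts once to move one derivative off the cross term $-4\Re(\Delta w\,\nabla\bar w\cdot \nabla Y_\varepsilon)$ reproduces term by term the explicit formula, so $\mathcal{F}_\varepsilon(w) = \int_{\T^2} |Lw|^2 e^{-2Y_\varepsilon}\,dx$. A routine double integration by parts shows that $L$ is self-adjoint for $\langle f,g\rangle_\varepsilon := \int f\bar g\, e^{-2Y_\varepsilon}\,dx$. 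Writing \eqref{NLSgauge} as $\partial_t v_\varepsilon = -iLv_\varepsilon - i\lambda N(v_\varepsilon)$ with $N(v) := e^{-pY_\varepsilon}v|v|^p$, one obtains
\[
\tfrac{d}{dt}\mathcal{F}_\varepsilon(v_\varepsilon) = 2\Re\langle\partial_t v_\varepsilon, L^2 v_\varepsilon\rangle_\varepsilon.
\]
Substituting the equation, the linear piece $2\Im\langle Lv_\varepsilon, L^2v_\varepsilon\rangle_\varepsilon$ vanishes as the imaginary part of a real number; pushing $L$ across the nonlinear piece by self-adjointness once more and substituting back $Lv = i\partial_t v - \lambda N(v)$ (the $\lambda^2$ term $\langle LN(v), N(v)\rangle_\varepsilon$ is again real by self-adjointness) yields
\[
\tfrac{d}{dt}\mathcal{F}_\varepsilon(v_\varepsilon) = -\lambda A, \qquad A := 2\Re\int_{\T^2} L(N(v_\varepsilon))\,\overline{\partial_t v_\varepsilon}\,e^{-2Y_\varepsilon}\,dx.
\]
The proposition therefore reduces to the identity $A = \tfrac{d}{dt}\mathcal{G}_\varepsilon(v_\varepsilon) - \mathcal{H}_\varepsilon(v_\varepsilon)$.

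Expanding $LN(v)$, integrating the $\Delta N(v)$ contribution by parts once (the piece coming from the derivative landing on $e^{-2Y_\varepsilon}$ cancels the $-2\nabla N(v)\cdot \nabla Y_\varepsilon$ term), and splitting $\nabla(e^{-pY_\varepsilon}v|v|^p)$ by Leibniz, one writes $A = A_1 + A_2 + A_3 + A_4$ where the $A_i$ carry respectively $|v|^p\nabla v$, $v\nabla(|v|^p)$, $-pv|v|^p\nabla Y_\varepsilon$ paired with $\nabla\overline{\partial_t v}$, and $v|v|^p :|\nabla Y_\varepsilon|^2:$ paired with $\overline{\partial_t v}$, each integrated against $e^{-(p+2)Y_\varepsilon}$. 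The pieces $A_1$, $A_3$, $A_4$ split cleanly as $\partial_t(\,\cdot\,) + \text{remainder}$ via the elementary identities $2\Re(\nabla v\cdot \nabla\overline{\partial_t v}) = \partial_t|\nabla v|^2$, $2\Re(v\overline{\partial_t v}) = \partial_t|v|^2$, $|v|^p\partial_t|v|^2 = \tfrac{2}{p+2}\partial_t|v|^{p+2}$, and a single extra Leibniz step for $A_3$; the $\partial_t$-pieces reproduce the first, fifth, and fourth terms of $\mathcal{G}_\varepsilon$ while the remainders reproduce the first and fourth terms of $\mathcal{H}_\varepsilon$.

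The main obstacle is the piece
\[
A_2 = -2\Re\int v\nabla(|v|^p)\cdot \nabla\overline{\partial_t v}\,e^{-(p+2)Y_\varepsilon},
\]
which cannot be split into a single $\partial_t$-piece plus a single $\mathcal{H}_\varepsilon$-remainder; instead it must match the second \emph{and} third terms of $\mathcal{G}_\varepsilon$ simultaneously. Setting $T_2^\mathcal{G} := -2\Re\int v\nabla(|v|^p)\cdot \nabla\bar v\,e^{-(p+2)Y_\varepsilon}$ and $T_3^\mathcal{G} := \tfrac{p}{4}\int |\nabla(|v|^2)|^2 |v|^{p-2}\,e^{-(p+2)Y_\varepsilon}$, expanding $\tfrac{d}{dt}(T_2^\mathcal{G} + T_3^\mathcal{G})$ by Leibniz, using that $-2\Re\int v\nabla(|v|^p)\cdot \nabla(\partial_t\bar v)\,e^{-(p+2)Y_\varepsilon} = A_2$, and subtracting the second and third terms of $\mathcal{H}_\varepsilon$, the claim $A_2 = \tfrac{d}{dt}(T_2^\mathcal{G}+T_3^\mathcal{G}) - (T_2^\mathcal{H}+T_3^\mathcal{H})$ reduces to showing that the residual
\[
-2\Re\int v\nabla(\partial_t|v|^p)\cdot\nabla\bar v\,e^{-(p+2)Y_\varepsilon} + \tfrac{p}{4}\int\partial_t|\nabla(|v|^2)|^2 \, |v|^{p-2}\,e^{-(p+2)Y_\varepsilon} + \tfrac{p}{2}\int|\nabla(|v|^2)|^2\partial_t(|v|^{p-2})\,e^{-(p+2)Y_\varepsilon}
\]
vanishes identically. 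This is a two-line computation using $\nabla(|v|^p) = \tfrac{p}{2}|v|^{p-2}\nabla(|v|^2)$ and $2\Re(v\nabla\bar v) = \nabla|v|^2$; indeed, these two identities turn the first integral into a linear combination of the other two with exactly the right coefficients to produce telescoping cancellation. The coefficient $p/4$ in $T_3^\mathcal{G}$ is precisely what this cancellation forces. Summing $A_1$--$A_4$ then yields $A = \tfrac{d}{dt}\mathcal{G}_\varepsilon - \mathcal{H}_\varepsilon$, from which the stated identity follows.
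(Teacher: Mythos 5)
Your proposal is correct, and it takes a genuinely different route from the paper's. The paper argues ``forwards'' in Section~7: it starts from $\frac d{dt}\int_{\T^2}|\Delta v_\varepsilon|^2e^{-2Y_\varepsilon}$, substitutes the equation for $\partial_t\Delta v_\varepsilon$, and discovers the corrector terms of ${\mathcal F}_\varepsilon$, ${\mathcal G}_\varepsilon$ and the remainder ${\mathcal H}_\varepsilon$ one by one through a long chain of integrations by parts. You instead isolate the structural reason the kinetic energy is the right object: ${\mathcal F}_\varepsilon(w)=\int_{\T^2}|Lw|^2e^{-2Y_\varepsilon}$ with $L=\Delta-2\nabla Y_\varepsilon\cdot\nabla+:|\nabla Y_\varepsilon|^2:$ self-adjoint for the weighted product (via the divergence form $\nabla\cdot(e^{-2Y_\varepsilon}\nabla\,\cdot\,)$), so the linear flow contributes nothing and the whole proof collapses to the single nonlinear pairing $A=2\Re\int_{\T^2} L(N(v_\varepsilon))\,\overline{\partial_t v_\varepsilon}\,e^{-2Y_\varepsilon}$. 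I verified the remaining steps: the identification of ${\mathcal F}_\varepsilon$ with $\int|Lw|^2e^{-2Y_\varepsilon}$ (term 5 of ${\mathcal F}_\varepsilon$ matches the cross term against $w:|\nabla Y_\varepsilon|^2:$ once one writes $\nabla(e^{-2Y_\varepsilon})=-2\nabla Y_\varepsilon e^{-2Y_\varepsilon}$), the splitting $A=A_1+\dots+A_4$, the matching of $A_1,A_3,A_4$ with the corresponding terms of ${\mathcal G}_\varepsilon$ and ${\mathcal H}_\varepsilon$, and the vanishing of the residual in the $A_2$ step: writing $\phi=|v|^2$, the first integral equals $-\frac p2\int\bigl[(\frac p2-1)\phi^{\frac p2-2}\partial_t\phi\,|\nabla\phi|^2+\phi^{\frac p2-1}\nabla\partial_t\phi\cdot\nabla\phi\bigr]e^{-(p+2)Y_\varepsilon}$, whose two pieces are cancelled exactly by the second and third integrals, confirming that the coefficient $\frac p4$ in ${\mathcal G}_\varepsilon$ is forced. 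Two cosmetic remarks: no integration by parts is actually needed to match the cross term $-4\Re(\Delta w\,\nabla\bar w\cdot\nabla Y_\varepsilon)$ with the second term of ${\mathcal F}_\varepsilon$, the expansion agrees verbatim; and the pairing $\langle\partial_t v,L^2v\rangle_\varepsilon$ should be read as $\langle L\partial_t v,Lv\rangle_\varepsilon$ in the duality sense, a formal point at exactly the same level of rigour as the paper's own manipulation of $(\partial_t\Delta v,\Delta v\,e^{-2Y_\varepsilon})$. What your route buys is transparency: the exact conservation of ${\mathcal F}_\varepsilon$ when $\lambda=0$, recorded as a separate Remark in the paper, becomes an immediate consequence of self-adjointness. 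What the paper's route buys is that it explains how the energies were found in the first place.
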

%%%%
\begin{remark}
Notice that in the linear case (namely \eqref{NLSgauge} with $\lambda=0$) we get the following exact conservation law:
$$\frac d{dt} {\mathcal F}_{\varepsilon}(v_\varepsilon )=0.$$
\end{remark}
%%%%
The proof of Proposition~\ref{modifen} will be presented in the last section of the paper.  Next, we estimate ${\mathcal H}_\varepsilon(v_\varepsilon)$ and  the lower order terms in the energy ${\mathcal E}_\varepsilon(v_\varepsilon)$.
They will play a crucial role in order to get the key $H^2$ a-priori bound
for $v_\varepsilon$.
In the sequel we shall assume that $v_\varepsilon$ solves \eqref{NLSgauge} with $\lambda\leq 0$
and $p\in [2,3]$. In particular we are allowed to use Proposition \ref{ercol10} in order to control
a.s. $\|v_\varepsilon(t,x)\|_{H^1}$ uniformly w.r.t. $\varepsilon$ and $t$.
\begin{prop}\label{firstcor}
Let $\Sigma\subset \Omega$ be the event of full probability, obtained in Propositions~\ref{ercol10}. 
Then there exists a random variable $C(\omega)$ finite on $\Sigma$ such that for every $\varepsilon\in (0,\frac{1}{2})$:
\begin{equation*}
|{\mathcal H}_\varepsilon(v_\varepsilon)|<
C(\omega) \Big( \|e^{-Y_\varepsilon}\Delta v_\varepsilon\|_{L^2}^2
\ln^\frac{p-1}2 \big (2+
%+ {\mathcal K}_\varepsilon {\mathcal P}(\|v_0\|_{H^1})\\
\|e^{-Y_\varepsilon} \Delta v_\varepsilon\|_{L^2}\big )
+  |\ln \varepsilon|^8 \Big).
\end{equation*}
%where ${\mathcal K}_\varepsilon$ is a combination of random variables in  Proposition \ref{specialrandom} and 
%${\mathcal P}(\|v_0\|_{H^1})$ is a polynomial function depending on
%$\|v_0\|_{H^1}$.
\end{prop}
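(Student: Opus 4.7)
The plan is to substitute the evolution equation \eqref{NLSgauge} in order to rewrite each of the four integrands making up $\mathcal H_\varepsilon(v_\varepsilon)$ as a purely spatial quantity, and then estimate the resulting integrals by Hölder, Sobolev embedding and the Brézis--Gallouët inequality, using the uniform $H^1$ bound from Proposition~\ref{ercol10} and the probabilistic bounds from Proposition~\ref{specialrandom}. More precisely, from \eqref{NLSgauge} we have
\[
\partial_t v_\varepsilon = -i\bigl(\Delta v_\varepsilon - 2\nabla v_\varepsilon\cdot \nabla Y_\varepsilon + v_\varepsilon :|\nabla Y_\varepsilon|^2: + \lambda e^{-pY_\varepsilon} v_\varepsilon|v_\varepsilon|^p\bigr),
\]
and inserting this into each of the four pieces of $\mathcal H_\varepsilon(v_\varepsilon)$ (both where $\partial_t v_\varepsilon$ appears directly and through $\partial_t(|v_\varepsilon|^{p})$, $\partial_t(|v_\varepsilon|^{p-2})$, $\partial_t(v_\varepsilon|v_\varepsilon|^{p})$) yields finitely many integrals of the form $\int P(v_\varepsilon,\nabla v_\varepsilon,\Delta v_\varepsilon) \cdot Q(Y_\varepsilon,\nabla Y_\varepsilon,:|\nabla Y_\varepsilon|^2:) \cdot e^{-(p+2)Y_\varepsilon}$ with $P$ polynomial of degree at most $3$ in spatial derivatives.

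For the terms that contain a $\Delta v_\varepsilon$ (these come from substituting the first piece above in the contributions involving $\partial_t v_\varepsilon$ directly in terms~(II) and~(IV), and from substituting the same piece inside $\partial_t(|v_\varepsilon|^p)$ in (I), (III)), I would isolate one factor $e^{-Y_\varepsilon}\Delta v_\varepsilon$ in $L^2$ via Hölder, distributing the remaining $e^{-(p+1)Y_\varepsilon}$ onto the other factors; Proposition~\ref{specialrandom} bounds these exponentials in $L^\infty$ by $C(\omega)$. A model term is
\[
\Bigl|\int |v_\varepsilon|^{p-1}\,\Delta v_\varepsilon\,|\nabla v_\varepsilon|^2\,e^{-(p+2)Y_\varepsilon}\Bigr|\leq C(\omega)\,\|e^{-Y_\varepsilon}\Delta v_\varepsilon\|_{L^2}\,\|v_\varepsilon\|_{L^\infty}^{p-1}\,\|\nabla v_\varepsilon\|_{L^4}^2.
\]
By Brézis--Gallouët in dimension two, $\|v_\varepsilon\|_{L^\infty}\lesssim \|v_\varepsilon\|_{H^1}\sqrt{\ln(2+\|v_\varepsilon\|_{H^2})}$, and by Gagliardo--Nirenberg $\|\nabla v_\varepsilon\|_{L^4}^2\lesssim \|v_\varepsilon\|_{H^1}\|v_\varepsilon\|_{H^2}$. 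Using Proposition~\ref{ercol10} to absorb all $\|v_\varepsilon\|_{H^1}$ into $C(\omega)$ and comparing $\|v_\varepsilon\|_{H^2}$ with $\|e^{-Y_\varepsilon}\Delta v_\varepsilon\|_{L^2}$ up to lower-order errors controlled by $\nabla Y_\varepsilon$, $:|\nabla Y_\varepsilon|^2:$ (hence by $|\ln\varepsilon|^2$), this whole contribution is bounded by $C(\omega)\|e^{-Y_\varepsilon}\Delta v_\varepsilon\|_{L^2}^2\,\ln^{(p-1)/2}(2+\|e^{-Y_\varepsilon}\Delta v_\varepsilon\|_{L^2})$ plus a remainder. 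Terms with two $\Delta v_\varepsilon$ (from substituting $\partial_t v_\varepsilon$ twice, e.g.\ in~(II)) follow the same pattern with two copies of $\|e^{-Y_\varepsilon}\Delta v_\varepsilon\|_{L^2}$; all non-$\Delta$ factors then contribute only $\|v_\varepsilon\|_{H^1}$-type norms times logarithms.

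For the remaining integrals, where $\partial_t v_\varepsilon$ has been replaced by the first-order piece $2i\nabla v_\varepsilon\cdot\nabla Y_\varepsilon$, the zero-order piece $iv_\varepsilon :|\nabla Y_\varepsilon|^2:$, or the nonlinear piece $i\lambda e^{-pY_\varepsilon}v_\varepsilon|v_\varepsilon|^p$, one applies Hölder with large exponent~$q$ for the factors carrying $\nabla Y_\varepsilon$ and $:|\nabla Y_\varepsilon|^2:$. Proposition~\ref{specialrandom} then provides $\|\nabla Y_\varepsilon\|_{L^q}\leq C(\omega)|\ln\varepsilon|$ and $\|:|\nabla Y_\varepsilon|^2:\|_{L^q}\leq C(\omega)|\ln\varepsilon|^2$; the remaining norms of $v_\varepsilon$ are controlled by $\|v_\varepsilon\|_{H^1}\leq C(\omega)$ via Sobolev embedding, so these terms contribute a purely $\varepsilon$-dependent remainder. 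The worst combination is of the order $\|\nabla Y_\varepsilon\|^4 \|:|\nabla Y_\varepsilon|^2:\|^2\sim |\ln\varepsilon|^8$, which accounts for the second term of the claimed bound.

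The main obstacle is the careful bookkeeping to ensure that no term requires more than $\|e^{-Y_\varepsilon}\Delta v_\varepsilon\|_{L^2}^2$ and that the logarithmic power matches exactly $(p-1)/2$ when $p\in[2,3]$: this forces the Brézis--Gallouët factor $\|v_\varepsilon\|_{L^\infty}$ to appear at most to the power $p-1$, which constrains the admissible range of $p$ and explains the upper bound $p\leq 3$. Once this accounting is done term by term and the small contributions are swept into $C(\omega)(\dots+|\ln\varepsilon|^8)$, the claimed estimate follows.
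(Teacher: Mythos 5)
Your proposal is correct and follows essentially the same route as the paper: bound each term of $\mathcal H_\varepsilon$ by H\"older and the pointwise bound $|\partial_t(|v_\varepsilon|^p)|\lesssim |v_\varepsilon|^{p-1}|\partial_t v_\varepsilon|$, apply Br\'ezis--Gallou\"et to $\|v_\varepsilon\|_{L^\infty}^{p-1}$ and Gagliardo--Nirenberg to $\|\nabla v_\varepsilon\|_{L^4}^2$, substitute the equation for $\partial_t v_\varepsilon$, and control the $\nabla Y_\varepsilon$ and $:|\nabla Y_\varepsilon|^2:$ factors by Propositions~\ref{specialrandom} and \ref{ercol10}, with Young's inequality sweeping the cross terms into $\|e^{-Y_\varepsilon}\Delta v_\varepsilon\|_{L^2}^2+|\ln\varepsilon|^8$. (Only trivial inaccuracy: no term of $\mathcal H_\varepsilon$ contains two time derivatives, so the ``two copies of $\Delta v_\varepsilon$'' case you mention does not actually arise.)
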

%%%
\begin{proof}
By using the H\"older inequality, the Leibnitz rule and the diamagnetic inequality $|\partial_t |u||\leq |\partial_t u|$ we get that the first three terms in ${\mathcal H}_\varepsilon(v_\varepsilon)$ can be estimated by:
\begin{equation*} \int_{\T^2} |\partial_t v_\varepsilon ||\nabla v_\varepsilon|^2 |v_\varepsilon|^{p-1} 
e^{-(p+2)Y_\varepsilon}
\leq \|\partial_t v_\varepsilon \|_{L^2(e^{-(p+2)Y_\varepsilon})} 
\|\nabla v_\varepsilon\|_{L^4({e^{-(p+2)Y_\varepsilon})}}^2 \|v_\varepsilon\|_{L^\infty}^{p-1}
\end{equation*}
that by the Brezis-Gallou\"et inequality (see \cite{BG}) can be estimates as follows:
\begin{multline*}
\dots \lesssim 
\|\partial_t v_\varepsilon \|_{L^2(e^{-(p+2)Y_\varepsilon})} \|\nabla v_\varepsilon\|_{L^4(e^{-(p+2)Y_\varepsilon})}^2
\|v_\varepsilon\|_{H^1}^{p-1}\\\times  \ln^\frac{p-1}2 (2+ \|v_\varepsilon\|_{L^2} + \|\Delta v_\varepsilon\|_{L^2})
\end{multline*}
and by using the equation solved by $v_\varepsilon(t,x)$:
\begin{multline*}\dots 
\lesssim
\|\Delta v_\varepsilon \|_{L^2(e^{-(p+2)Y_\varepsilon})} \|\nabla v_\varepsilon\|_{L^4(e^{-(p+2)Y_\varepsilon})}^2
\|v_\varepsilon\|_{H^1}^{p-1} \ln^\frac{p-1}2 (2+ \|v_\varepsilon\|_{L^2} + \|\Delta v_\varepsilon\|_{L^2})\\
+\|\nabla v_\varepsilon  \cdot \nabla Y_\varepsilon \|_{L^2({e^{-(p+2)Y_\varepsilon})}} 
\|\nabla v_\varepsilon\|_{L^4(e^{-(p+2)Y_\varepsilon})  }^2
\|v_\varepsilon\|_{H^1}^{p-1} \ln^\frac{p-1}2 (2+ \|v_\varepsilon\|_{L^2} + \|\Delta v_\varepsilon\|_{L^2})
\\
+\|v_\varepsilon :|\nabla Y_\varepsilon |^2: \|_{L^2(e^{-(p+2)Y_\varepsilon})} \|\nabla v_\varepsilon\|_{L^4(e^{-(p+2)Y_\varepsilon})}^2
\|v_\varepsilon\|_{H^1}^{p-1} \ln^\frac{p-1}2 (2+ \|v_\varepsilon\|_{L^2} + \|\Delta v_\varepsilon\|_{L^2})\\
+\|e^{-pY_\varepsilon}v_\varepsilon |v_\varepsilon|^p\|_{L^2(e^{-(p+2)Y_\varepsilon})} \|\nabla v_\varepsilon\|_{L^4(e^{-(p+2)Y_\varepsilon})}^2
\|v_\varepsilon\|_{H^1}^{p-1} \ln^\frac{p-1}2 (2+ \|v_\varepsilon\|_{L^2} + \|\Delta v_\varepsilon\|_{L^2})
\\
=I+II+III+IV.\end{multline*}
Next we recall a family of estimates that will be useful to control $I,II, III, IV$.
We shall also use without any further comment Propositions \ref{specialrandom} and \ref{ercol10}. 
We have the Gagliardo-Nirenberg type inequality
\begin{equation}\label{GN}
\|\nabla u\|_{L^4(\T^2)}^2\leq C \|\Delta u \|_{L^2(\T^2)}\|\nabla u\|_{L^2(\T^2)} \,. 
\end{equation}
Indeed, using the Sobolev embedding $H^{\frac{1}{2}}(\T^2)\subset L^4(\T^2)$, we can write
$$
\|\nabla u\|_{L^4(\T^2)}^2
\leq C \|\nabla u \|^2_{H^{\frac{1}{2}}(\T^2)}\leq
C \|\nabla u\|_{H^1(\T^2)}\|\nabla u\|_{L^2(\T^2)}\,.
$$
It remains to observe that 
$$
\|\nabla u\|_{H^1(\T^2)}\leq C\|\Delta u\|_{L^2(\T^2)}\,.
$$
Therefore we have \eqref{GN}. Now, using \eqref{GN} we get:
\begin{equation*}
\|\nabla v_\varepsilon\|_{L^4(e^{-(p+2)Y_\varepsilon})}^2
< C(\omega)
\|\nabla v_\varepsilon\|_{L^4}^2
< C(\omega)\|\Delta v_\varepsilon\|_{L^2}
\|\nabla v_\varepsilon\|_{L^2} \end{equation*}
and also
\begin{multline*}
\|\nabla v_\varepsilon  \cdot \nabla Y_\varepsilon \|_{L^2(e^{-(p+2)Y_\varepsilon})}
<  C(\omega) \|\nabla v_\varepsilon\|_{L^4}\|\nabla Y_\varepsilon \|_{L^4}\\
< C(\omega)\|\Delta v_\varepsilon\|_{L^2}^\frac 12
\|\nabla v_\varepsilon\|_{L^2}^\frac 12 \|\nabla Y_\varepsilon \|_{L^4}.\end{multline*}
Next notice that
\begin{multline*}\|v_\varepsilon :|\nabla Y_\varepsilon |^2: \|_{L^2(e^{-(p+2)Y_\varepsilon})}<
C(\omega)
\|v_\varepsilon \|_{L^4}\|:|\nabla Y_\varepsilon |^2: \|_{L^4} \\
<C(\omega)
\|v_\varepsilon \|_{H^1} \|:|\nabla Y_\varepsilon |^2: \|_{L^4}\,,
\end{multline*}
where we have used the Sobolev embedding.
Again by the Sobolev embedding we get:
\begin{equation*}
\| e^{-pY_\varepsilon} v_\varepsilon |v_\varepsilon|^p\|_{L^2(e^{-(p+2)Y_\varepsilon})}<C(\omega) \|v_\varepsilon\|_{L^{2(p+1)}}^{p+1}<C(\omega) \|v_\varepsilon\|_{H^1}^{p+1}.
\end{equation*}
Finally notice that
\begin{equation*}\label{ercol}
\|\Delta v_\varepsilon\|_{L^2}\leq \|e^{-Y_\varepsilon}\Delta v_\varepsilon\|_{L^2} \|e^{Y_\varepsilon}\|_{L^\infty}
<C(\omega) \|e^{-Y_\varepsilon}\Delta v_\varepsilon\|_{L^2} .
\end{equation*}
Based on the estimates above we get:
\begin{equation*}
I< C(\omega) \|e^{-Y_\varepsilon}\Delta v_\varepsilon\|_{L^2}^2
\ln^\frac{p-1}2 \big(2 +  C(\omega)+C(\omega) \|e^{-Y_\varepsilon} \Delta v_\varepsilon\|_{L^2}\big)\,.
\end{equation*}
and also
\begin{multline*}
II< C(\omega) 
\|e^{-Y_\varepsilon}\Delta v_\varepsilon\|_{L^2}^\frac 32 |\ln \varepsilon|
\ln^\frac{p-1}2  (2 + C(\omega)+ C(\omega)\|e^{-Y_\varepsilon} \Delta v_\varepsilon\|_{L^2})
\\
<
C(\omega)\|e^{-Y_\varepsilon}\Delta v_\varepsilon\|_{L^2}^2
+|\ln \varepsilon|^4
\ln^{2(p-1)} (2+ C(\omega)+C(\omega)\|e^{-Y_\varepsilon} \Delta v_\varepsilon\|_{L^2})
\\< 
C(\omega)  (\|e^{-Y_\varepsilon}\Delta v_\varepsilon\|_{L^2}^2+1)
+|\ln \varepsilon|^8,
\end{multline*}
where we used the Young inequality.
We conclude with the following estimates:
\begin{multline*}
III< C(\omega) 
\|e^{-Y_\varepsilon}\Delta v_\varepsilon\|_{L^2} |\ln \varepsilon|^{2}
\ln^\frac{p-1}2 (2+ C(\omega)+ C(\omega) \|e^{-Y_\varepsilon} \Delta v_\varepsilon\|_{L^2})\\
< C(\omega) 
(\|e^{-Y_\varepsilon}\Delta v_\varepsilon\|_{L^2}^2 + 1)
+|\ln \varepsilon|^8 \,
\end{multline*}
and
\begin{multline*}
IV< C(\omega) 
\|e^{-Y_\varepsilon}\Delta v_\varepsilon\|_{L^2}
\ln^\frac{p-1}2  (2+  C(\omega)+C(\omega)\|e^{-Y_\varepsilon} \Delta v_\varepsilon\|_{L^2})
\\<  C(\omega) 
(\|e^{-Y_\varepsilon}\Delta v_\varepsilon\|_{L^2}^2 +1). 
\end{multline*}
Summarizing we can control the first three terms in ${\mathcal H}_\varepsilon(v_\varepsilon)$.
Concerning 
the last term in the expression of ${\mathcal H}_\varepsilon (v_\varepsilon)$ we can estimate it as follows:
\begin{multline*}
\int_{\T^2} |\partial_t v_\varepsilon | |v_\varepsilon |^p 
|\nabla Y_\varepsilon| |\nabla v_\varepsilon| e^{-(p+2)Y_\varepsilon}\\
\leq \|e^{-(p+2)Y_\varepsilon}\|_{L^\infty} \|v_\varepsilon \|_{L^{8p}}^p
\|\partial_t v_\varepsilon \|_{L^2} \|\nabla Y_\varepsilon\|_{L^8} 
 \|\nabla  v_\varepsilon\|_{L^4} 
\\
<C(\omega) |\ln \varepsilon| \|\partial_t v_\varepsilon \|_{L^2} 
 \|\nabla v_\varepsilon \|_{L^4} 
\end{multline*}
where we have used Propositions  \ref{specialrandom} and \ref{ercol10}
in conjunction with the Sobolev embedding to control $\|v_\varepsilon \|_{L^{8p}}$. Hence by the Gagliardo-Nirenberg \eqref{GN}  inequality 
and by using the equation solved by $v_\varepsilon$ we can continue as follows 
\begin{multline*}\dots < C(\omega) |\ln \varepsilon| \|\Delta v_\varepsilon \|_{L^2} 
\|\Delta v_\varepsilon\|_{L^2}^\frac 12
\|\nabla v_\varepsilon\|_{L^2}^\frac 12\\
+C(\omega) |\ln \varepsilon| 
\|\nabla v_\varepsilon  \cdot \nabla Y_\varepsilon \|_{L^2} \|\Delta v_\varepsilon\|_{L^2}^\frac 12 
\|\nabla v_\varepsilon\|_{L^2}^\frac 12\\
+C(\omega) |\ln \varepsilon| \|v_\varepsilon :|\nabla Y_\varepsilon |^2: \|_{L^2}
\|\Delta v_\varepsilon\|_{L^2}^\frac 12
\|\nabla v_\varepsilon\|_{L^2}^\frac 12
\\+C(\omega) |\ln \varepsilon| \|e^{-pY_\varepsilon}v_\varepsilon |v_\varepsilon|^p\|_{L^2}
\|\Delta v_\varepsilon\|_{L^2}^\frac 12
\|\nabla v_\varepsilon\|_{L^2}^\frac 12.
\end{multline*}
and by the Sobolev embedding 
\begin{multline*}\dots <C(\omega) |\ln \varepsilon| 
\|\Delta v_\varepsilon\|_{L^2}^\frac 32 
+C(\omega) |\ln \varepsilon| 
\|\nabla v_\varepsilon\|_{L^4} \|\nabla Y_\varepsilon \|_{L^4} \|\Delta v_\varepsilon \|_{L^2}^\frac 12 \\
+C(\omega) |\ln \varepsilon| \|v_\varepsilon\|_{L^4} \|:|\nabla Y_\varepsilon |^2: \|_{L^4}
\|\Delta v_\varepsilon\|_{L^2}^\frac 12
+C(\omega) |\ln \varepsilon| \|v_\varepsilon |v_\varepsilon|^p\|_{L^2}
\|\Delta v_\varepsilon\|_{L^2}^\frac 12
\\
< C(\omega) |\ln \varepsilon| 
\|\Delta v_\varepsilon\|_{L^2}^\frac 32 
+C(\omega) |\ln \varepsilon|^2  \|\Delta v_\varepsilon \|_{L^2} 
+C(\omega) |\ln \varepsilon|^3 
\|\Delta v_\varepsilon\|_{L^2}^\frac 12
+C(\omega) |\ln \varepsilon| 
\|\Delta v_\varepsilon\|_{L^2}^\frac 12.
\end{multline*}
The conclusion is now straighforward.
\end{proof}
%%%%%
\begin{prop}\label{bounds} 
Let $\Sigma\subset \Omega$ be the event of full probability, obtained in Propositions~\ref{ercol10}. 
For every $\delta>0$ and $\omega\in \Sigma$ there exists a finite constant  $C(\omega, \delta)>0$ such that for every $\varepsilon\in (0,\frac{1}{2})$:
\begin{equation}\label{mart}
|{\mathcal F}_{\varepsilon}(v_\varepsilon)-\int_{\T^2} |\Delta v_\varepsilon|^2 e^ {-2Y_\varepsilon}
|< \delta \|e^{-Y_\varepsilon} \Delta v_\varepsilon\|_{L^2}^2
%\\
+
C(\omega, \delta) |\ln \varepsilon|^4
\end{equation}
and
\begin{equation}\label{pastor}|{\mathcal G}_{\varepsilon}(v_\varepsilon)|<
\delta \|e^{-Y_\varepsilon} \Delta v_\varepsilon\|_{L^2}^2 +C(\omega, \delta)  |\ln \varepsilon|^4.
\end{equation}
%where ${\mathcal P} (\|v_0\|_{H^1})$ is as in Proposition~\ref{firstcor}.
\end{prop}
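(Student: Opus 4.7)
The plan is to estimate each of the six cross terms in ${\mathcal F}_\varepsilon(v_\varepsilon)-\int_{\T^2}|\Delta v_\varepsilon|^2 e^{-2Y_\varepsilon}$ and each of the five terms in ${\mathcal G}_\varepsilon(v_\varepsilon)$ separately by H\"older's inequality, using three input ingredients: (a) the uniform bounds from Propositions~\ref{specialrandom} and \ref{ercol10}, namely $\|e^{\pm Y_\varepsilon}\|_{L^\infty},\|v_\varepsilon\|_{H^1}\leq C(\omega)$ together with $\|\nabla Y_\varepsilon\|_{L^q}\lesssim C(\omega)|\ln\varepsilon|$ and $\|{:}|\nabla Y_\varepsilon|^2{:}\|_{L^q}\lesssim C(\omega)|\ln\varepsilon|^2$ for any $q<\infty$; (b) the Gagliardo--Nirenberg inequality $\|\nabla u\|_{L^4}^2\lesssim\|\Delta u\|_{L^2}\|\nabla u\|_{L^2}$ already used in the proof of Proposition~\ref{firstcor}, and the Sobolev embedding $H^1(\T^2)\hookrightarrow L^r(\T^2)$ for every $r<\infty$; (c) the Brezis--Gallou\"et inequality to control $\|v_\varepsilon\|_{L^\infty}$ in the nonlinear terms of ${\mathcal G}_\varepsilon$. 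The final mechanism is Young's inequality with parameter $\delta$, which converts any product $\|\Delta v_\varepsilon\|_{L^2}^\alpha\,|\ln\varepsilon|^\beta$ with $\alpha<2$ into $\delta\|e^{-Y_\varepsilon}\Delta v_\varepsilon\|_{L^2}^2+C(\omega,\delta)|\ln\varepsilon|^{2\beta/(2-\alpha)}$.

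For \eqref{mart}, the extremal contribution is $\int \Delta v_\varepsilon\,\bar v_\varepsilon\,{:}|\nabla Y_\varepsilon|^2{:}\,e^{-2Y_\varepsilon}$, which H\"older and Sobolev embedding bound by $C(\omega)\|\Delta v_\varepsilon\|_{L^2}\|v_\varepsilon\|_{L^4}\|{:}|\nabla Y_\varepsilon|^2{:}\|_{L^4}\lesssim C(\omega)\|\Delta v_\varepsilon\|_{L^2}|\ln\varepsilon|^2$; Young then produces $\delta\|\Delta v_\varepsilon\|_{L^2}^2+C(\omega,\delta)|\ln\varepsilon|^4$, dictating the exponent $4$ in the statement. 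For the term $\int\Delta v_\varepsilon\,\nabla Y_\varepsilon\cdot\nabla\bar v_\varepsilon\,e^{-2Y_\varepsilon}$ I pair $\|\Delta v_\varepsilon\|_{L^2}\|\nabla Y_\varepsilon\|_{L^4}\|\nabla v_\varepsilon\|_{L^4}$ and apply Gagliardo--Nirenberg to the last factor; for the quadratic-in-$\nabla v_\varepsilon$ pieces I use H\"older with a large-$r$ norm of $(\partial_i Y_\varepsilon)^2$ against $\|\nabla v_\varepsilon\|_{L^{r'}}^2$, interpolating between $L^2$ and $L^4$ and absorbing the resulting $\|\Delta v_\varepsilon\|_{L^2}^{\alpha}$ with $\alpha<2$ by Young; the term with $\nabla(e^{-2Y_\varepsilon})=-2\nabla Y_\varepsilon\,e^{-2Y_\varepsilon}$ reduces to these patterns; finally $\int|v_\varepsilon|^2({:}|\nabla Y_\varepsilon|^2{:})^2e^{-2Y_\varepsilon}$ is bounded directly by $C(\omega)\|v_\varepsilon\|_{L^4}^2\|{:}|\nabla Y_\varepsilon|^2{:}\|_{L^4}^2\lesssim C(\omega)|\ln\varepsilon|^4$, with no need for Young.

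For \eqref{pastor}, the three terms in ${\mathcal G}_\varepsilon$ without $Y_\varepsilon$-gradients are handled by extracting $\|v_\varepsilon\|_{L^\infty}^p\lesssim C(\omega)\ln^{p/2}(2+\|\Delta v_\varepsilon\|_{L^2})$ via Brezis--Gallou\"et and pairing with $C(\omega)\|\nabla v_\varepsilon\|_{L^2}^2\leq C(\omega)$; since $p\in[2,3]$ the resulting logarithm of $\|\Delta v_\varepsilon\|_{L^2}$ is subalgebraic and is absorbed into $\delta\|\Delta v_\varepsilon\|_{L^2}^2+C(\omega,\delta)$. For $\int|v_\varepsilon|^{p+2}\,{:}|\nabla Y_\varepsilon|^2{:}\,e^{-(p+2)Y_\varepsilon}$, H\"older with $\|v_\varepsilon\|_{L^{2(p+2)}}^{p+2}\leq C(\omega)$ and $\|{:}|\nabla Y_\varepsilon|^2{:}\|_{L^2}\lesssim|\ln\varepsilon|^2$ gives $C(\omega)|\ln\varepsilon|^2$. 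The remaining term $\int v_\varepsilon|v_\varepsilon|^p\,\nabla Y_\varepsilon\cdot\nabla\bar v_\varepsilon\,e^{-(p+2)Y_\varepsilon}$ combines $\|v_\varepsilon\|_{L^\infty}^p$ (treated as above), $\|\nabla Y_\varepsilon\|_{L^4}\lesssim|\ln\varepsilon|$ and $\|\nabla v_\varepsilon\|_{L^4}\lesssim C(\omega)\|\Delta v_\varepsilon\|_{L^2}^{1/2}$ via Gagliardo--Nirenberg, followed by Young's inequality.

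The only real obstacle is the exponent bookkeeping: at every step one must verify that the product of a subquadratic power of $\|\Delta v_\varepsilon\|_{L^2}$ with a power of $|\ln\varepsilon|$ produces, after Young, a constant no worse than $|\ln\varepsilon|^4$. The $\Delta v_\varepsilon\cdot\bar v_\varepsilon\cdot{:}|\nabla Y_\varepsilon|^2{:}$ term in ${\mathcal F}_\varepsilon$ and the $|v_\varepsilon|^2({:}|\nabla Y_\varepsilon|^2{:})^2$ term saturate this bound, so the choice of exponent $4$ in the statement is sharp in the present argument.
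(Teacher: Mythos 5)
Your proposal is correct and follows essentially the same route as the paper: term-by-term H\"older estimates using the uniform bounds of Propositions~\ref{specialrandom} and \ref{ercol10}, the Gagliardo--Nirenberg inequality \eqref{GN}, Sobolev embedding, and Young's inequality with parameter $\delta$, with the same two terms identified as those dictating the exponent $|\ln \varepsilon|^4$. The only (harmless) deviations are that the paper controls the nonlinear factors in ${\mathcal G}_\varepsilon$ by the embedding $H^1\subset L^q$ rather than by Brezis--Gallou\"et, and bounds the last term of ${\mathcal G}_\varepsilon$ using only $\|\nabla v_\varepsilon\|_{L^2}$ so that no power of $\|\Delta v_\varepsilon\|_{L^2}$ and hence no Young step is needed there.
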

%%%%
\begin{proof}
We estimate the terms involved in the expression
${\mathcal F}_{\varepsilon}(v_\varepsilon)-\int_{\T^2} |\Delta v_\varepsilon|^2 e^ {-2Y_\varepsilon}$.
Since the arguments are quite similar to the ones used along of Proposition~\ref{firstcor}, we skip the details. Using  Propositions ~\ref{specialrandom} and \ref{ercol10}, we can write
\begin{multline*}|\int_{\T^2} \Delta v_\varepsilon \nabla Y_\varepsilon \cdot \nabla \bar v_\varepsilon e^{-2Y_\varepsilon}|\leq
\|\Delta v_\varepsilon\|_{L^2} \|\nabla Y_\varepsilon\|_{L^4} \|\nabla v_\varepsilon\|_{L^4}  
\|e^{-2Y_\varepsilon}\|_{L^\infty}\\
< C(\omega) \|\Delta v_\varepsilon\|_{L^2}   |\ln \varepsilon| 
\|\Delta v_\varepsilon\|_{L^2}^\frac 12 \| \nabla v_\varepsilon\|_{L^2}^\frac 12
< C(\omega)  \|e^{-Y_\varepsilon} \Delta v_\varepsilon\|_{L^2}^\frac 32   |\ln \varepsilon|\\
<  \delta \|e^{-Y_\varepsilon} \Delta v_\varepsilon\|_{L^2}^2  + C(\omega, \delta)  |\ln \varepsilon|^4.
\end{multline*}
Next notice that third and fourth term in the energy ${\mathcal F}_{\varepsilon}$ can be estimated by 
\begin{multline*}
 \|\nabla v_\varepsilon\|_{L^4}^2 \|\nabla Y_\varepsilon\|_{L^4}^2 \|e^{-2Y_\varepsilon}\|_{L^\infty}
\\ < C(\omega)|\ln \varepsilon|^2 
 \|\Delta v_\varepsilon\|_{L^2} \| \nabla v_\varepsilon\|_{L^2}
 < C(\omega)  |\ln \varepsilon|^2   \|e^{-Y_\varepsilon} \Delta v_\varepsilon\|_{L^2} 
\\ < \delta \|e^{-Y_\varepsilon} \Delta v_\varepsilon\|_{L^2}^2  + C(\omega, \delta) |\ln \varepsilon|^4.
\end{multline*}
By similar arguments and Sobolev embedding we get:
\begin{multline*}
| \int_{\T^2} v_\varepsilon :|\nabla Y_\varepsilon|^2:  \nabla \bar v_\varepsilon \cdot \nabla (e^{-2Y_\varepsilon})|
\\
\leq 2 \|v_\varepsilon\|_{L^8} \|:|\nabla Y_\varepsilon|^2:\|_{L^8} 
\|\nabla v_\varepsilon\|_{L^2} \|\nabla Y_\varepsilon\|_{L^4} \|e^{-2Y_\varepsilon}\|_{L^\infty}
<  C(\omega) 
|\ln \varepsilon|^{3}.
%\| \nabla v_\varepsilon\|_{L^2}^\frac 12 \|\Delta v_\varepsilon\|_{L^2}^\frac 12
%< C(\omega) \|e^{-Y_\varepsilon} \Delta v_\varepsilon\|_{L^2}^\frac 12 |\ln \varepsilon|^{3}
%\\\leq  \delta \|e^{-Y_\varepsilon} \Delta v_\varepsilon\|_{L^2}^2  
%+ C(\omega, \delta)   |\ln \varepsilon|^4\,.
\end{multline*}
Next we estimate the other term to be controlled:
\begin{multline*}
|\int_{\T^2} \Delta v_\varepsilon \bar v_\varepsilon :|\nabla Y_\varepsilon|^2: e^{-2Y_\varepsilon}|
\leq
\|\Delta v_\varepsilon\|_{L^2} \|:|\nabla Y_\varepsilon|^2:\|_{L^4} \|v_\varepsilon\|_{L^4}  
\|e^{-2Y_\varepsilon}\|_{L^\infty}
\\
<  C(\omega) 
\|e^{-Y_\varepsilon}\Delta v_\varepsilon\|_{L^2} |\ln \varepsilon|^2 
< \delta \|e^{-Y_\varepsilon} \Delta v_\varepsilon\|_{L^2}^2  
+ C(\omega, \delta)   |\ln \varepsilon|^4\,,
\end{multline*}
where we have used the Sobolev embedding.
We conclude the proof of \eqref{mart} by the following estimates:
\begin{eqnarray*}
|\int_{\T^2}  |v_\varepsilon|^2 (:|\nabla Y_\varepsilon|^2:)^2 e^{-2Y_\varepsilon}|
%\\
 \leq \|v_\varepsilon\|_{L^4}^2 \|:|\nabla Y_\varepsilon|^2:\|_{L^4}^2 \|e^{-2Y_\varepsilon}\|_{L^\infty}
< C(\omega)  |\ln \varepsilon|^4 \,,
\end{eqnarray*}
where we have used again the Sobolev embedding.
Next, we prove \eqref{pastor}. The first, second and third terms in the definition
of ${\mathcal G}_\varepsilon (v_\varepsilon)$ can be estimated
essentially by the same argument. Let us focus on the first one:
\begin{multline*}
\int_{\T^2} |\nabla v_\varepsilon|^2|v_\varepsilon|^p {e^{-(p+2)Y_\varepsilon}}
\leq \|\nabla v_\varepsilon\|_{L^4} ^2 \| v_\varepsilon\|_{L^{2p}} ^p \|e^{-(p+2)Y_\varepsilon}\|_{L^\infty}
\\
<C(\omega)\|\Delta v_\varepsilon\|_{L^2} \|\nabla v_\varepsilon\|_{L^2} \| v_\varepsilon\|_{H^1} ^p
< C(\omega)  \|e^{-Y_\varepsilon}\Delta v_\varepsilon\|_{L^2}\,,
\end{multline*}
where we have used again the Sobolev embedding, the Gagliardo-Nirenberg
inequality and  Propositions \ref{specialrandom} and \ref{ercol10}.
Concerning the fourth term in the definition
of ${\mathcal G}_\varepsilon (v_\varepsilon)$ we get by the H\"older inequality and the Sobolev embedding
\begin{multline*} \int_{\T^2}  |v_\varepsilon|^{p+2} :|\nabla Y_\varepsilon|^2:  e^{-(p+2)Y_\varepsilon}
\\\leq \|:|\nabla Y_\varepsilon|^2:\|_{L^4}  \|v_\varepsilon\|^{p+2}_{L^{\frac 43(p+2)}} 
\|e^{-(p+2)Y_\varepsilon}\|_{L^\infty} <C(\omega)  |\ln \varepsilon|^2
\end{multline*}
where we have used again 
Propositions \ref{specialrandom} and \ref{ercol10}.
Finally we focus on the last term in the definition
of ${\mathcal G}_\varepsilon (v_\varepsilon)$ that can be estimated as follows
\begin{multline*}
\int_{\T^2} |v_\varepsilon|^{p+1} |\nabla Y_{\varepsilon}| |\nabla v_\varepsilon|  e^{-(p+2)Y_{\varepsilon}}
\\\leq \| v_\varepsilon\|_{L^{4(p+1)}}^{p+1}\|\nabla Y_{\varepsilon}\|_{L^4} \|\nabla v_\varepsilon\|_{L^2}  \|e^{-(p+2)Y_{\varepsilon}}\|_{L^\infty}
<C(\omega) |\ln \varepsilon|
\end{multline*}
where we have used Propositions~\ref{specialrandom} and \ref{ercol10}.
\end{proof}
As already mentioned in the introduction  we carefully follow the approach in \cite{DW} along the proof of Theorem~\ref{main}. The main novelty being the following
$H^2$ a-priori bound that we extend to the regime of the nonlinearity $2\leq p\leq 3$.
Next we shall focus on the proof of the following Proposition (to be compared with Proposition~4.2 in \cite{DW})
which is the most important result of this section.
\begin{prop}\label{prop123} 
Let $\Sigma\subset \Omega$ be the event of full probability, obtained in Propositions~\ref{ercol10} and let $T>0$ be fixed. 
Then there exists a random variable $C(\omega)>0$ finite for every $\omega\in 
\Sigma$ and such that for every $\varepsilon\in (0,\frac{1}{2})$, 
$$\sup_{t\in [-T,T]}\|v_{\varepsilon}(t,x)\|_{H^2}< |\ln \varepsilon|^{C(\omega)}.$$
\end{prop}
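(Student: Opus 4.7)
The plan is to combine Proposition~\ref{modifen} with the size estimates from Propositions~\ref{firstcor} and \ref{bounds}, and close the resulting integral inequality via the logarithmic Gronwall lemma (Proposition~\ref{gronwlog}). Set $f_\varepsilon(t) := \|e^{-Y_\varepsilon}\Delta v_\varepsilon(t)\|_{L^2}^2$; since $\|e^{\pm Y_\varepsilon}\|_{L^\infty} \leq C(\omega)$ uniformly in $\varepsilon$ by Proposition~\ref{specialrandom}, a control on $f_\varepsilon(t)$ automatically yields one on $\|\Delta v_\varepsilon(t)\|_{L^2}$, and together with the uniform $H^1$ bound of Proposition~\ref{ercol10} this controls the full $H^2$ norm.

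The first step is to choose $\delta>0$ small enough (depending on $|\lambda|$) in Proposition~\ref{bounds} so as to get
\begin{equation*}
\tfrac{1}{2}\, f_\varepsilon(t) - C(\omega)|\ln\varepsilon|^4 \;\leq\; \mathcal{E}_\varepsilon(v_\varepsilon)[t] \;\leq\; 2\, f_\varepsilon(t) + C(\omega)|\ln\varepsilon|^4.
\end{equation*}
At $t=0$ we have $v_\varepsilon(0,x)=e^{Y}u_0 \in H^2$ with $\|v_\varepsilon(0)\|_{H^2}\leq C(\omega)$ by \eqref{H2}, so $\mathcal{E}_\varepsilon(v_\varepsilon)[0]\leq C(\omega)+C(\omega)|\ln\varepsilon|^4$. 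Next, integrating the identity in Proposition~\ref{modifen} in time and inserting the pointwise bound on $|\mathcal{H}_\varepsilon(v_\varepsilon)|$ from Proposition~\ref{firstcor}, the two-sided comparison above gives, for $t\in[0,T]$,
\begin{equation*}
f_\varepsilon(t) \;\leq\; A \;+\; B\int_0^t f_\varepsilon(s)\,\ln^{(p-1)/2}\!\bigl(2+f_\varepsilon(s)^{1/2}\bigr)\,ds,
\end{equation*}
with $A=A(\omega,T)\leq C(\omega)\bigl(1+T|\ln\varepsilon|^8\bigr)$ and $B=B(\omega)<\infty$.

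Since $(p-1)/2\leq 1$ for $p\in[2,3]$, one has $\ln^{(p-1)/2}(2+y^{1/2})\lesssim \ln(C+y)$ for $y\geq 0$, so the integrand is dominated by $C\,f_\varepsilon(s)\ln(C+f_\varepsilon(s))$. Proposition~\ref{gronwlog} then applies and gives $f_\varepsilon(t)\leq (A+C)^{e^{Bt}}$. For $t\in[0,T]$ this is bounded by $\bigl(C(\omega)|\ln\varepsilon|^8\bigr)^{e^{B(\omega)T}}$, which for $\varepsilon$ small enough is $\leq |\ln\varepsilon|^{C(\omega)}$ after absorbing the double-exponential factor into a new random constant $C(\omega)$. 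The bound on $[-T,0]$ follows from the time reversibility of the Schr\"odinger flow \eqref{NLSgauge}, which allows us to run the same argument backwards in time.

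The delicate point — and the reason the range $p\in[2,3]$ is precisely what one can reach — is the Br\'ezis--Gallou\"et input in Proposition~\ref{firstcor}: the $L^\infty$ factor $\|v_\varepsilon\|_{L^\infty}^{p-1}$ arising from the nonlinearity produces only a power $(p-1)/2\leq 1$ of a logarithm of $f_\varepsilon$. This is exactly the threshold at which Proposition~\ref{gronwlog} converts what would naively be a double-exponential-in-time blow-up into a polynomial-in-$|\ln\varepsilon|$ bound; for $p>3$ this logarithmic power would exceed $1$ and the Gronwall step would fail, explaining the limitation.
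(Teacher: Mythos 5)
Your proposal is correct and follows essentially the same route as the paper: integrate the modified-energy identity of Proposition~\ref{modifen}, use Proposition~\ref{bounds} with small $\delta$ to compare $\mathcal{E}_\varepsilon$ with $\|e^{-Y_\varepsilon}\Delta v_\varepsilon\|_{L^2}^2$ up to $C(\omega)|\ln\varepsilon|^4$, insert the bound on $\mathcal{H}_\varepsilon$ from Proposition~\ref{firstcor}, and close with the logarithmic Gronwall lemma using $(p-1)/2\leq 1$. Your closing remark on why $p=3$ is the natural threshold for this argument also matches the paper's own discussion.
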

%%%%
\begin{proof}[Proof of Proposition~\ref{prop123}]
We only consider positive times $t$. The case $t<0$ can be treated similarly. 
We shall prove the following estimate
$$
\|e^{-Y_\varepsilon} \Delta v_\varepsilon(t)\|_{L^2}^2\leq
 \Big(C(\omega) (1+ |\ln \varepsilon|^4
+ T|\ln \varepsilon|^8)\Big)^{e^{C(\omega) t}}, \quad \forall t\in [0,T]
$$
for a suitable random constant which is finite a.s.,
%where  ${\mathcal P} (\|v_0\|_{H^2})$ is a polynomial depending on $\|v_0\|_{H^2}$.
then the conclusion follows  by 
$$\|\Delta v_\varepsilon(t)\|_{L^2}^2\leq \|e^{-Y_\varepsilon} \Delta v_\varepsilon(t)\|_{L^2}^2
\|e^{2 Y_\varepsilon}\|_{L^\infty}<C(\omega)  \|e^{-Y_\varepsilon} \Delta v_\varepsilon(t)\|_{L^2}^2.$$
%we get
%$$\|\Delta v_\varepsilon(t,x)\|_{L^2}^2\leq \Big(C(\omega)
%(1+ |\ln \varepsilon|^4
%+ t|\ln \varepsilon|^8)\Big)^{e^{2+C(\omega) t}}\,. $$
By Proposition \ref{modifen} after integration in time
and by using Propositions \ref{firstcor} and \ref{bounds} (where we choose $\delta>0$ small 
in such a way that we can absorb on the l.h.s. the factor 
$\|e^{-Y_\varepsilon}\Delta v_\varepsilon(t)\|_{L^2}^2$)
we can write:
\begin{multline}\label{eleute}\|e^{-Y_\varepsilon}\Delta v_\varepsilon(t)\|_{L^2}^2
\\
\leq
C(\omega)
\int_0^t  \big[ \|e^{-Y_\varepsilon}\Delta v_\varepsilon\|_{L^2}^2
\ln^\frac{p-1}2 (2+ \|e^{-Y_\varepsilon} \Delta v_\varepsilon\|_{L^2})
+  |\ln \varepsilon|^8 \big]
\\
+ C(\omega)    |\ln \varepsilon|^4+{\mathcal E}_\varepsilon (v_0)\,.
\end{multline}
Notice also that by Proposition \ref{bounds} one can can show the following bound
for every $\omega$ belonging to the event given in Proposition 
\ref{ercol10}: 
%By \eqref{mart} and by recalling that $\|e^{-Y_\varepsilon} \Delta v_0\|_{L^2}^2\leq 
%\|\Delta v_0\|_{L^2}^2 \|e^{-Y_\varepsilon}\|_{L^\infty}^2$
%one has that ${\mathcal E}_\varepsilon (v_0)$ can be evaluated as follows 
$$
|{\mathcal E}_\varepsilon (v_0)|< C(\omega)(1 + |\ln \varepsilon|^4)\,.
$$
Hence, by recalling that $\frac{p-1}2\leq 1$, we deduce from \eqref{eleute} the following  bound
\begin{multline*}\|e^{-Y_\varepsilon}\Delta v_\varepsilon(t)\|_{L^2}^2
\leq
C(\omega)
\int_0^t  \big[ \|e^{-Y_\varepsilon}\Delta v_\varepsilon\|_{L^2}^2
\ln (2+ \|e^{-Y_\varepsilon} \Delta v_\varepsilon\|_{L^2})
\big]  ds\\
+C(\omega)(1 + |\ln \varepsilon|^4 +T  |\ln \varepsilon|^8), \quad \forall t\in (0,T).
\end{multline*}
We can apply Proposition~\ref{gronwlog} and the conclusion follows.

\end{proof}
%%%%
\section{Proof of Theorem \ref{main}}
%%%%%%%%%%%%%
%%%%%%%%%%%%%%%%%%%
\subsection{Convergence of the approximate solutions}
\begin{prop} 
Let $T>0$ be fixed and $v_\varepsilon(t,x, \omega)$ be as in Theorem \ref{main}. Then 
there exists $ v(t,x, \omega)\in {\mathcal C}(\R;H^\gamma)$ such that 
$$\sup_{t\in [-T, T]} \|v_\varepsilon(t,x, \omega) -  v(t,x, \omega)\|_{H^\gamma}
\overset{\varepsilon \rightarrow 0} \longrightarrow 0, \quad a.s. \quad w.r.t. \quad \omega.$$
\end{prop}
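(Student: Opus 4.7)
The plan is to establish that $(v_\varepsilon)_{\varepsilon\in(0,1)}$ is Cauchy in $\mathcal{C}([-T,T];H^\gamma)$ almost surely on the event $\Sigma$ of Proposition~\ref{ercol10}. Following the four-step strategy outlined in the introduction, I will first handle the dyadic sequence $\varepsilon_k=2^{-k}$ (step 2) and then upgrade to the continuous family (step 3), exploiting the $H^2$ a priori bound from Proposition~\ref{prop123} together with the quantitative convergence of the probabilistic objects in Proposition~\ref{specialrandom}.

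First I would set $w = w_{\varepsilon_1,\varepsilon_2} := v_{\varepsilon_1} - v_{\varepsilon_2}$ for $0 < \varepsilon_2 < \varepsilon_1 < 1/2$ and derive the equation it satisfies by subtracting the two versions of \eqref{NLSgauge}. Schematically,
\begin{equation*}
 i\partial_t w = \Delta w - 2\nabla w\cdot\nabla Y_{\varepsilon_1} + w :|\nabla Y_{\varepsilon_1}|^2: + \lambda e^{-pY_{\varepsilon_1}}(v_{\varepsilon_1}|v_{\varepsilon_1}|^p - v_{\varepsilon_2}|v_{\varepsilon_2}|^p) + R_{\varepsilon_1,\varepsilon_2},
\end{equation*}
where the remainder $R_{\varepsilon_1,\varepsilon_2}$ collects all terms where a coefficient $Y_{\varepsilon_1},\nabla Y_{\varepsilon_1}, :|\nabla Y_{\varepsilon_1}|^2:, e^{-pY_{\varepsilon_1}}$ has been replaced by its $\varepsilon_2$-counterpart acting on $v_{\varepsilon_2}$. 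The key structural observation is that each summand in $R_{\varepsilon_1,\varepsilon_2}$ is a product of a factor like $(\nabla Y_{\varepsilon_1}-\nabla Y_{\varepsilon_2})$, $(\,:|\nabla Y_{\varepsilon_1}|^2:-:|\nabla Y_{\varepsilon_2}|^2:\,)$ or $(e^{-pY_{\varepsilon_1}}-e^{-pY_{\varepsilon_2}})$ times quantities built from $v_{\varepsilon_2}$.

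Next I would run a weighted $L^2$ energy estimate on $w$ with weight $e^{-2Y_{\varepsilon_1}}$, chosen so that the worst linear term $-2\nabla w\cdot\nabla Y_{\varepsilon_1}$ is absorbed exactly as in the derivation of the Hamiltonian structure recalled in Proposition~\ref{ercol10}. This leads to
\begin{equation*}
 \tfrac{d}{dt}\|e^{-Y_{\varepsilon_1}} w\|_{L^2}^2 \lesssim |\langle :|\nabla Y_{\varepsilon_1}|^2: w, w\rangle_{e^{-2Y_{\varepsilon_1}}}| + (\text{nonlinear})+ |\langle R_{\varepsilon_1,\varepsilon_2}, w\rangle_{e^{-2Y_{\varepsilon_1}}}|.
\end{equation*}
The nonlinear difference is controlled by $\|v_{\varepsilon_1}\|_{L^\infty}^p+\|v_{\varepsilon_2}\|_{L^\infty}^p$ times $\|w\|_{L^2}^2$, which by the Brézis--Gallouët inequality and Proposition~\ref{prop123} is bounded by $C(\omega) (\ln|\ln\varepsilon_1|)^p \|w\|_{L^2}^2$. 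The probabilistic differences in $R_{\varepsilon_1,\varepsilon_2}$ are estimated by Proposition~\ref{specialrandom} and contribute a factor $C(\omega)\varepsilon_1^{\kappa}$; when paired with the $H^2$-bound of $v_{\varepsilon_2}$ (polylogarithmic in $\varepsilon_2$), a duality/Sobolev embedding argument gives
\begin{equation*}
 |\langle R_{\varepsilon_1,\varepsilon_2}, w\rangle_{e^{-2Y_{\varepsilon_1}}}| \le C(\omega)\,\varepsilon_1^{\kappa}|\ln\varepsilon_2|^{C(\omega)} \|w\|_{L^2}+ C(\omega)\|w\|_{L^2}^2.
\end{equation*}
Applying Proposition~\ref{gronwclass} then yields $\sup_{t\in[-T,T]}\|w(t)\|_{L^2}\le C(\omega)\varepsilon_1^{\kappa/2}$, after possibly shrinking $\kappa$ to absorb the logarithmic factors.

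With the $L^2$ convergence at algebraic rate $\varepsilon^{\kappa/2}$ in hand and the $H^2$-bound $\sup_{t\in[-T,T]}\|v_\varepsilon(t)\|_{H^2}\le |\ln\varepsilon|^{C(\omega)}$ from Proposition~\ref{prop123}, I interpolate: for $\gamma\in[0,2)$,
\begin{equation*}
 \|w(t)\|_{H^\gamma}\lesssim \|w(t)\|_{L^2}^{1-\gamma/2}\|w(t)\|_{H^2}^{\gamma/2}\le C(\omega)\,\varepsilon_1^{\kappa(1-\gamma/2)/2}|\ln\varepsilon_1|^{C(\omega)\gamma/2},
\end{equation*}
which is still summable along the dyadic sequence $\varepsilon_k=2^{-k}$. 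Thus $(v_{2^{-k}})$ is Cauchy in $\mathcal{C}([-T,T];H^\gamma)$ and admits an almost sure limit $v\in\mathcal{C}(\R;H^\gamma)$ (after a diagonal argument over a countable exhaustion of $T$ and a dense set of $\gamma<2$). To pass from the dyadic sequence to the full family, I would apply the same difference estimate to $w_{\varepsilon,2^{-k}}$ whenever $2^{-(k+1)}\le\varepsilon\le 2^{-k}$, which yields the same upper bound in terms of $2^{-k}$ and therefore shows that $v_\varepsilon \to v$ uniformly on $[-T,T]$ in $H^\gamma$.

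The main obstacle I anticipate is the careful bookkeeping of the $R_{\varepsilon_1,\varepsilon_2}$ terms: the factor $:|\nabla Y_{\varepsilon_1}|^2:-:|\nabla Y_{\varepsilon_2}|^2:$ is only controlled in negative Sobolev spaces $W^{-s,q}$ and must be paired by duality with $(v_{\varepsilon_2}w\,e^{-2Y_{\varepsilon_1}})\in W^{s,q'}$, requiring the $H^2$ bound on $v_{\varepsilon_2}$ to justify the $W^{s,q'}$ estimate; winning back an algebraic rate in $\varepsilon$ from such a pairing against the polylogarithmic blowup of $\|v_{\varepsilon_2}\|_{H^2}$ is precisely where Proposition~\ref{specialrandom} enters decisively. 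Once this is in place, Gronwall and interpolation do the rest.
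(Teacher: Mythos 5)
Your proposal is correct and follows essentially the same route as the paper: a weighted $L^2$ energy estimate on the difference with weight $e^{-2Y_{\varepsilon_1}}$, duality in $W^{-s,q}\times W^{s,q'}$ for the differences of the singular coefficients, Br\'ezis--Gallou\"et combined with the polylogarithmic $H^2$ bound of Proposition~\ref{prop123} for the nonlinear term, the Gronwall lemma of Proposition~\ref{gronwclass}, interpolation with the $H^2$ bound to upgrade to $H^\gamma$, and the dyadic-then-continuous two-step scheme. One small remark: the term $\langle :|\nabla Y_{\varepsilon_1}|^2:\, w, w\rangle_{e^{-2Y_{\varepsilon_1}}}$ that you list on the right-hand side of the energy identity in fact vanishes identically once the imaginary part is taken (the integrand is real), which is fortunate, since estimating it by $\|w\|_{L^2}^2$ alone would not be possible given that $:|\nabla Y_{\varepsilon_1}|^2:$ lives only in $W^{-s,q}$.
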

%%%
\begin{proof} 
We shall only consider positive times, the analysis for negative times being similar.
Let us fix $T>0$.  Set 
$$
r(t,x)=v_{\varepsilon_1}(t,x)-v_{\varepsilon_2}(t,x), \quad \varepsilon_2\geq \varepsilon_1\,.
$$
Then the  equation solved by $r$ is the following one:
\begin{multline*}i \partial_t r= \Delta r - 2\nabla r  \cdot \nabla Y_{\varepsilon_1} 
+ r :|\nabla Y_{\varepsilon_1} |^2: 
\\- 2\nabla v_{\varepsilon_2}   \cdot \nabla (Y_{\varepsilon_1}  - Y_{\varepsilon_2})
+ v_{\varepsilon_2} (:|\nabla Y_{\varepsilon_1} |^2:-:|\nabla Y_{\varepsilon_2} |^2:)
\\+ \lambda e^{-pY_{\varepsilon_1}} r |v_{\varepsilon_1}|^p  - \lambda e^{-pY_{\varepsilon_1}} v_{\varepsilon_2} 
(|v_{\varepsilon_2}|^p -|v_{\varepsilon_1}|^p)+\lambda v_{\varepsilon_2}|v_{\varepsilon_2}|^p
( e^{-pY_{\varepsilon_1}} -e^{-pY_{\varepsilon_2}} ).
%\\+\lambda (v_{\varepsilon_1}|v_{\varepsilon_1}|^p -v_{\varepsilon_2}|v_{\varepsilon_2}|^p).
\end{multline*}
We multiply the equation by $e^{-2Y_{\varepsilon_1}(x)}\bar r(t,x)$ and we consider the imaginary part, then we get
\begin{multline} \frac 12 \frac d{dt} \int_{\T^2} e^{-2Y_{\varepsilon_1}} |r(t)|^2 
= - 2 \Im \int_{\T^2} e^{-2Y_{\varepsilon_1}} \bar r (t) 
\nabla v_{\varepsilon_2}   \cdot \nabla (Y_{\varepsilon_1}  - Y_{\varepsilon_2}) \\
+ \Im \int_{\T^2} e^{-2Y_{\varepsilon_1}} \bar r(t) v_{\varepsilon_2}(t) (:|\nabla Y_{\varepsilon_1} |^2:-:|\nabla Y_{\varepsilon_2} |^2:)
\\
+\lambda \Im \int_{\T^2} e^{-(p+2)Y_{\varepsilon_1}} 
|r(t)|^2 |v_{\varepsilon_1}(t)|^p
\\
-\lambda \Im \int_{\T^2}  
e^{-(p+2)Y_{\varepsilon_1}} \bar r(t)v_{\varepsilon_2}(t) 
(|v_{\varepsilon_2}(t)|^p -|v_{\varepsilon_1}(t)|^p)
\\
+\lambda \Im \int_{\T^2} e^{-2Y_{\varepsilon_1}}\bar r(t)v_{\varepsilon_2}(t)|v_{\varepsilon_2}(t)|^p
( e^{-pY_{\varepsilon_1}} -e^{-pY_{\varepsilon_2})}
\\=I+II+III+IV+V.
\end{multline}
From now on we choose $\omega \in \Sigma$ where the event $\Sigma$ is defined as in Proposition~\ref{ercol10}.
We estimate $I$ by using duality and Lemma 2.2 in \cite{OPT} (see also the proof of Proposition~\ref{ercol10}):
\begin{multline*}
|I|\lesssim \|\nabla (Y_{\varepsilon_1}  - Y_{\varepsilon_2})\|_{W^{-s,q}}
\| e^{-2Y_{\varepsilon_1}} \bar r (t) 
\nabla v_{\varepsilon_2} (t)\|_{W^{s,q'}}
\\\lesssim \|\nabla (Y_{\varepsilon_1}  - Y_{\varepsilon_2})\|_{W^{-s,q}}
\times \big (\|e^{-2Y_{\varepsilon_1}}\|_{W^{s,q_1}} \|r(t)\|_{L^{q_2}}
\|\nabla v_{\varepsilon_2}(t)\|_{L^{q_3}}\\
+\|e^{-2Y_{\varepsilon_1}}\|_{L^{q_1}} \|r(t)\|_{W^{s,q_2}}
\|\nabla v_{\varepsilon_2}(t)\|_{L^{q_3}}+\|e^{-2Y_{\varepsilon_1}}\|_{L^{q_1}} \|r(t)\|_{L^{q_2}}
\|\nabla v_{\varepsilon_2}(t)\|_{W^{s,q_3}}\big ) 
 \end{multline*}
where $\frac 1{q'}=\frac 1{q_1}+\frac 1{q_2}+\frac 1{q_3}$ and $s\in (0,1), q\in (1, \infty)$.
Next notice that by choosing $s\in (0,1)$ small enough, by using Sobolev embedding 
and by recalling
Propositions~\ref{specialrandom}, \ref{ercol10} and \ref{prop123}  we get
$$|I|< C(\omega) \varepsilon_2^\kappa \|v_{\varepsilon_2}(t)\|_{H^2} \|r(t)\|_{H^1}
< C(\omega) \varepsilon_2^\kappa |\ln \varepsilon_2|^{C(\omega)}.$$
By a similar argument we can estimate $II$ as follows:
\begin{multline*}
|II|\lesssim \|:|\nabla Y_{\varepsilon_1}|^2:  - :|Y_{\varepsilon_2}|^2:\|_{W^{-s,q}}
\| e^{-2Y_{\varepsilon_1}} \bar r (t) 
v_{\varepsilon_2}(t) \|_{W^{s,q'}}
\\\lesssim \|\nabla (Y_{\varepsilon_1}  - Y_{\varepsilon_2})\|_{W^{-s,q}}
\times \big (\|e^{-2Y_{\varepsilon_1}}\|_{W^{s,q_1}} \|r(t)\|_{L^{q_2}}
\|v_{\varepsilon_2}(t)\|_{L^{q_3}}\\
+\|e^{-2Y_{\varepsilon_1}}\|_{L^{q_1}} \|r(t)\|_{W^{s,q_2}}
\|v_{\varepsilon_2}(t)\|_{L^{q_3}}+\|e^{-2Y_{\varepsilon_1}}\|_{L^{q_1}} \|r(t)\|_{L^{q_2}}
\|v_{\varepsilon_2}(t)\|_{W^{s,q_3}}\big ) 
 \end{multline*}
and hence by using Sobolev embedding 
and by recalling
Propositions~\ref{specialrandom}, \ref{ercol10} and \ref{prop123}  we get for $s$ small enough
$$|II|\lesssim C(\omega) \varepsilon_2^\kappa \|v_{\varepsilon_2}(t)\|_{H^2}<
C(\omega) \varepsilon_2^\kappa.
$$
The estimate of the term $III$ is rather classical and can be done by using the Br\'ezis-Gallou\"et inequality
(see \cite{BG}). More precisely we get:
\begin{multline*}
|III|\lesssim \|e^{-\frac{p+2}2Y_{\varepsilon_1}} r(t)\|_{L^2}^2 \|v_{\varepsilon_1}(t)\|_{L^\infty}^p
\\
\lesssim 
\|e^{-\frac{p+2}2 Y_{\varepsilon_1}} r(t)\|_{L^2}^2 (\|v_{\varepsilon_1}(t)\|_{H^1}^p
\ln^\frac p2  (2+  \|v_{\varepsilon_1}(t)\|_{H^2})
\\
< C(\omega) \|e^{-\frac{p+2}2 Y_{\varepsilon_1}} r(t)\|_{L^2}^2 
\ln^\frac p2  (2+  \|v_{\varepsilon_1}(t)\|_{H^2})
\end{multline*}
where we have used at the last step Proposition \ref{ercol10}.
In order to control $\|v_{\varepsilon_1}(t)\|_{H^2}$ 
we use Proposition \ref{prop123} and we get
\begin{multline*}
|III|< C(\omega) \|e^{-\frac{p+2}2 Y_{\varepsilon_1}} r(t)\|_{L^2}^2 
\ln^\frac p2 (|\ln \varepsilon_1|^{C(\omega)})\\
<
C(\omega) \|e^{-Y_{\varepsilon_1}} r(t)\|_{L^2}^2 
\ln^\frac p2 (|\ln \varepsilon_1|^{C(\omega)})
\end{multline*}
Next, arguing as in the estimate of $III$, we get by combining
Propositions \ref{ercol10} and \ref{prop123}
\begin{multline*}
|IV|\lesssim \|e^{-\frac{p+2}2Y_{\varepsilon_1}} r(t)\|_{L^2}^2 (\|v_{\varepsilon_1}(t)\|_{L^\infty}^{p-1}
+\|v_{\varepsilon_2}(t)\|_{L^\infty}^{p-1})\|v_{\varepsilon_2}(t)\|_{L^\infty}
\\
\lesssim 
\|e^{-\frac{p+2}2Y_{\varepsilon_1}} r(t)\|_{L^2}^2 \ln^\frac p2 (|\ln \varepsilon_1|^{C(\omega)})
<C(\omega) 
\|e^{-Y_{\varepsilon_1}} r(t)\|_{L^2}^2 \ln^\frac p2 (|\ln \varepsilon_1|^{C(\omega)}).
\end{multline*}
Finally by the H\"older inequality, Propositions
\ref{specialrandom} and \ref{ercol10} we estimate
\begin{equation*}
 |V|\lesssim 
\|e^{-2Y_{\varepsilon_1}}\|_{L^\infty} \|\bar r(t)\|_{L^2}
\|v_{\varepsilon_2}(t)\|_{L^{2(p+1)}}^{p+1}
\| e^{-pY_{\varepsilon_1}} -e^{-pY_{\varepsilon_2}}\|_{L^\infty}
\\
< C(\omega)\varepsilon_2^\kappa.
\end{equation*}
Summarizing we obtain 
\begin{multline}\label{GrOnW}
\frac 12 \frac d{dt} \int_{\T^2} e^{-2Y_{\varepsilon_1}} |r(t)|^2 
< C(\omega) \varepsilon_2^\kappa |\ln \varepsilon_2|^{C(\omega)}
\\ +  C(\omega) 
\ln^\frac p2  (|\ln \varepsilon_1|^{C(\omega)}
)  \int_{\T^2} e^{-2Y_{\varepsilon_1}} |r(t)|^2  \,.
\end{multline}
Next we split the proof in two steps.
\\
\\
{\em First step: 
$v_{2^{-k}}(t,x,\omega)\overset {k\rightarrow \infty} \longrightarrow  v(t,x,\omega)$ 
for every $\omega \in \Sigma$.}
\\
\\
We consider $r=v_{2^{-(k+1)}} - v_{2^{-k}}$. 
Then 
by combining Proposition~\ref{gronwclass} and \eqref{GrOnW} (where we choose $\varepsilon_1
=2^{-(k+1)}$ and $
\varepsilon_2=2^{-k}$) we get:
\begin{multline*}
\sup_{t\in (0,T)}\int_{\T^2} e^{-2Y_{2^{-(k+1)}}}  |v_{2^{-(k+1)}}(t)- 
v_{2^{-k}}(t)|^2 \\
< \frac{ C(\omega)  2^{-k\kappa} |\ln 2^{-k}|^{C(\omega)} 
e^{C(\omega) \ln^\frac p2  (|\ln 2^{-(k+1)}|^{C(\omega)}) T}}{\ln^\frac p2  (|\ln 2^{-(k+1)}|^{C(\omega)})}
%\leq \frac{
%C(\omega) 2^{-\kappa k} |\ln 2^{-(k+1)}|^{C(\omega)}
%e^{ \ln^\frac p2  (2+ C(\omega) |\ln 2^{-(k+1)} |^{C(\omega)} ) T}}{
%\ln^{\frac p2}  (2+ C(\omega) |\ln 2^{-(k+1)}|^{C(\omega)} ) }.
\end{multline*}
By recalling that for every
$\omega\in \Sigma$ we have $\sup_k \|e^{2Y_{2^{-(k+1)}}}\|_{L^\infty}<C(\omega)<\infty$
we deduce that 
the bound above implies
\begin{multline*}
\sup_{t\in (0,T)}\int_{\T^2} |v_{2^{-(k+1)}}(t)- 
v_{2^{-k}}(t)|^2
< \frac{ C(\omega) 2^{-k\kappa} |\ln 2^{-k}|^{C(\omega)} 
e^{C(\omega) \ln^\frac p2  (|\ln 2^{-(k+1)}|^{C(\omega)}) T}}{\ln^\frac p2  (|\ln 2^{-(k+1)}|^{C(\omega)})}.
%\leq \frac{
%C(\omega) 2^{-\kappa k} |\ln 2^{-(k+1)}|^{C(\omega)}
%e^{ \ln^\frac p2  (2+ C(\omega) |\ln 2^{-(k+1)} |^{C(\omega)} ) T}}{
%\ln^{\frac p2}  (2+ C(\omega) |\ln 2^{-(k+1)}|^{C(\omega)} ) }.
\end{multline*}
By combining this estimate with interpolation and with Proposition \ref{prop123} 
we deduce for every $\gamma\in [0,2)$ the following the bound 
\begin{multline*}
\sup_{t\in (0,T)}\|v_{2^{-(k+1)}}(t)- 
v_{2^{-k}}(t)\|_{H^\gamma} \\
< \frac{ C(\omega) 2^{-k \tilde \kappa} |\ln 2^{-k}|^{C(\omega)} 
e^{C(\omega) \ln^\frac p 2  (|\ln 2^{-(k+1)}|^{C(\omega)}) T}}{\ln^\frac {\tilde p}2  (|\ln 2^{-(k+1)}|^{C(\omega)})}.
%\leq \frac{
%C(\omega) 2^{-\tilde \kappa k} k^{C(\omega)}
%e^{ \ln^\frac p2  (2+ C(\omega) k^{C(\omega)} ) T} |\ln 2^{-(k+1)}|^{C(\omega)}}{
%\ln^{\frac p2}  (2+ C(\omega) k^{C(\omega)}) }
\end{multline*}
where $\tilde \kappa, \tilde p>0$ are constants that depend from 
the interpolation inequality. It is easy to check that
$$\sum_k  \frac{ C(\omega) 2^{-k \tilde \kappa} |\ln 2^{-k}|^{C(\omega)} 
e^{C(\omega) \ln^\frac p 2  (|\ln 2^{-(k+1)}|^{C(\omega)}) T}}{\ln^\frac {\tilde p}2  (|\ln 2^{-(k+1)}|^{C(\omega)})}
<\infty$$
%$$\sum_k \frac{
%C(\omega) 2^{-\tilde \kappa k} k^{C(\omega)}
%e^{ \ln^\frac p2  (2+ C(\omega) k^{C(\omega)} ) T} |\ln 2^{-(k+1)}|^{C(\omega)}}{
%\ln^{\frac p2}  (2+ C(\omega) k^{C(\omega)}) }<\infty$$
and therefore  $(v_{2^{-k}})$ is a Cauchy sequence in ${\mathcal C}([0,T];H^\gamma)$ and we conclude.
\\
\\
{\em Second step: $v_{\varepsilon}(t,x,\omega)\overset {\varepsilon\rightarrow 0} \longrightarrow 
 v(t,x,\omega)$ for every $\omega\in \Sigma$.}
\\
\\
For every $\varepsilon\in (2^{-(k+1)}, 2^{-k})$ we introduce
$r=v_{\varepsilon}- v_{2^{-k}}$. Then by combining \eqref{GrOnW}
(where we choose $\varepsilon_1=\varepsilon$ and $\varepsilon_2=2^{-k}$) with Proposition \ref{gronwclass}
and arguing as above  we get
\begin{equation*}
\sup_{t\in (0,T)}\|v_{\varepsilon}(t)- v_{2^{-k}}(t)\|_{H^\gamma} < \frac{ C(\omega) 2^{-k \tilde \kappa} |\ln 2^{-k}|^{C(\omega)} 
e^{C(\omega) \ln^\frac p 2  (|\ln \varepsilon|^{C(\omega)}) T}}{\ln^\frac {\tilde p}2  (|\ln \varepsilon |^{C(\omega)})}
%\\
%\leq \frac{
%C(\omega) 2^{-\tilde \kappa k} k^{C(\omega)}
%e^{ \ln^\frac p2  (2+ C(\omega) k^{C(\omega)} ) T} |\ln 2^{-(k+1)}|^{C(\omega)}}{
%\ln^{\frac p2}  (2+ C(\omega) k^{C(\omega)})}.
\end{equation*}
and hence (recall that $\varepsilon\in (2^{-(k+1)}, 2^{-k})$)
$$\sup_{\varepsilon\in (2^{-(k+1)}, 2^{-k})}\|v_{\varepsilon}(t)- 
v_{2^{-k}}(t)\|_{H^\gamma}\overset {k\rightarrow \infty} \longrightarrow 0.$$
We conclude by recalling the first step. 
\end{proof}
%%%%%%
\subsection{Uniqueness for \eqref{NLSgaugeintronewbar}}
%In this section we sketch the proof of Theorem~\ref{main}, following \cite{DW}.  We split the proof of Theorem~\ref{main} in two parts. The first part  is about pathwise uniqueness, whose proof is essentially unchanged compared with the one proposed in \cite{DW} (indeed the  strength of the nonlinearity along the proof plays no role).
%%%%
It follows from the analysis of the previous section that $v_{\varepsilon}$ converges almost surely to a solution of \eqref{NLSgaugeintronewbar}.
We next prove the uniqueness of this solution. 
\begin{prop}\label{unIQU} 
Let $\Sigma\subset \Omega$ be the full measure event defined in Proposition~\ref{ercol10} and $T>0$.
For every $\omega \in \Sigma$ 
there exists at most one solution 
$ v(t,x)\in {\mathcal C}([0,T];H^\gamma)$ to \eqref{NLSgaugeintronewbar}
for $\gamma>1$.
\end{prop}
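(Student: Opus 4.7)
I would imitate the weighted $L^2$ energy estimate of Section~5.1, with substantial simplifications afforded by the fact that both solutions solve the \emph{same} limit equation and lie in $H^\gamma$ with $\gamma>1$. Fix two solutions $v_1,v_2\in\mathcal{C}([0,T];H^\gamma)$ of \eqref{NLSgaugeintronewbar} with identical initial data, set $r:=v_1-v_2$, and note that $r$ satisfies
\begin{equation*}
i\partial_t r=\Delta r-2\nabla r\cdot\nabla Y+r:|\nabla Y|^2:+\lambda e^{-pY}\big(v_1|v_1|^p-v_2|v_2|^p\big),\qquad r(0,x)=0.
\end{equation*}
The plan is to test this equation against $e^{-2Y(x,\omega)}\bar r(t,x)$, integrate over $\T^2$, take the imaginary part, and exploit the cancellation at the heart of the gauge transform to kill every linear contribution, so that only the nonlinearity survives and can be absorbed by Gronwall.

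\textbf{Cancellation of the linear part.} The left-hand side gives $\tfrac12\frac{d}{dt}\int_{\T^2}e^{-2Y}|r|^2$. An integration by parts in the Laplacian term yields
$$
\Im\int_{\T^2}\Delta r\,e^{-2Y}\bar r\,dx=2\,\Im\int_{\T^2}e^{-2Y}\bar r\,(\nabla r\cdot\nabla Y)\,dx,
$$
since the $|\nabla r|^2 e^{-2Y}$ piece is real. This exactly cancels the contribution of $-2\nabla r\cdot\nabla Y$ from the first-order term, which is the same cancellation that motivated the substitution $v=e^{Y}u$ in the first place. The renormalized potential produces $\Im\int_{\T^2}|r|^2:|\nabla Y|^2:e^{-2Y}\,dx$, interpreted as a duality pairing between $:|\nabla Y|^2:\in W^{-s,q}$ (from Proposition~\ref{specialrandom}) and the real-valued function $|r|^2 e^{-2Y}\in W^{s,q'}$ (for $s\in(0,1)$ and $q$ large, using that $\gamma>1$ and the fractional product rule of \cite{OPT} exactly as in the proof of Proposition~\ref{ercol10}); being the pairing of a real distribution against a real test function, it has zero imaginary part.

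\textbf{Nonlinear term and conclusion.} The only surviving contribution is the nonlinear one, for which I would use the pointwise Lipschitz bound $|v_1|v_1|^p-v_2|v_2|^p|\lesssim(|v_1|^p+|v_2|^p)|r|$ together with the Sobolev embedding $H^\gamma\hookrightarrow L^\infty$ (valid since $\gamma>1$) and Proposition~\ref{specialrandom} to estimate
$$
\Big|\lambda\,\Im\int_{\T^2}e^{-(p+2)Y}(v_1|v_1|^p-v_2|v_2|^p)\bar r\,dx\Big|\leq C(\omega)\sup_{i,\,t\in[0,T]}\|v_i(t)\|_{H^\gamma}^{p}\int_{\T^2}e^{-2Y}|r|^2\,dx.
$$
Combining everything gives $\frac{d}{dt}\int_{\T^2}e^{-2Y}|r|^2\leq C(\omega,T)\int_{\T^2}e^{-2Y}|r|^2$, and Gronwall with $r(0)=0$ forces $r\equiv 0$ on $[0,T]$.

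\textbf{Main obstacle.} The delicate point is the rigorous justification of these manipulations at regularity only $\gamma>1$: since $\nabla Y$ lives in a negative Sobolev space, the product $\nabla r\cdot\nabla Y$ is not classical but a duality pairing, and one must check that the integration by parts and the cancellation above hold in the distributional sense. I would handle this by writing the identity first for a mollified $\rho_\delta\ast r$ tested against the regularized $Y_\varepsilon$ (where everything is smooth and the computation is transparent) and then passing to $\delta\to 0$ and $\varepsilon\to 0$, using the uniform bounds of Proposition~\ref{specialrandom} and the convergence supplied by Propositions~\ref{firstprob} and~\ref{firstprobthird}; the commutator errors produced by the mollifier vanish in the limit because $\gamma-1>0$. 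Notice that, in contrast to the $H^2$ a priori bound of Section~4, no Brezis--Gallou\"et argument is needed here, precisely because $H^\gamma$ with $\gamma>1$ already embeds into $L^\infty$.
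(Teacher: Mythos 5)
Your proposal is correct and follows the same weighted $L^2$/Gronwall strategy as the paper: form $r=v_1-v_2$, test against a conjugated weight, use the gauge cancellation for the first-order term, pair the rough potentials in $W^{-s,q}$--$W^{s,q'}$ duality via Lemma~2.2 of \cite{OPT}, and control the nonlinearity through $H^\gamma\hookrightarrow L^\infty$. The one place where the paper genuinely diverges from your plan is exactly the point you flag as the main obstacle: rather than testing against $e^{-2Y}\bar r$ and justifying the exact cancellation by a mollification argument, the paper tests against $e^{-2Y_\varepsilon}\bar r$ with the \emph{smooth regularized} weight. The cancellation is then no longer exact --- integration by parts of the Laplacian produces $\nabla Y_\varepsilon$ while the equation carries $\nabla Y$ --- but the residual term $2\,\Im\int_{\T^2} e^{-2Y_\varepsilon}\bar r\,\nabla r\cdot\nabla(Y_\varepsilon-Y)$ is bounded by $C(\omega)\varepsilon^{\kappa}$ using \eqref{dve_zvezdi} together with $\nabla r\in H^{\gamma-1}$, $\gamma-1>0$. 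Gronwall (Proposition~\ref{gronwclass}) is then applied for each fixed $\varepsilon$, yielding $\int_{\T^2} e^{-2Y_\varepsilon}|r(t)|^2< C(\omega)\varepsilon^{\kappa}e^{C(\omega)t}$, and only afterwards is the limit $\varepsilon\to 0$ taken. This ordering buys something concrete: one never has to make rigorous a differential identity involving the rough weight $e^{-2Y}$, nor chase the commutator errors of a mollifier; one only needs an inequality with an explicit $O(\varepsilon^{\kappa})$ error for each smooth weight, an error which disappears after Gronwall. Your version can presumably be completed along the lines you sketch, but the paper's device is the shorter and cleaner route to the same end.
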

%%%%%%
\begin{proof}
Assume $ v_1(t,x)$ and  $ v_2(t,x)$ are two solutions, then we consider the difference
$r(t,x)= v_1(t,x)- v_2(t,x)$ which solves
\begin{equation*}
\begin{cases}
i\partial_t r  = \Delta r  - 2\nabla r  \cdot \nabla Y + r :|\nabla Y |^2:
+\lambda  e^{-pY}( v_1| v_1|^p- v_2| v_2|^p),\\
r(0,x)=0.
\end{cases}\end{equation*}
Next we multiply the equation by $e^{-2Y_\varepsilon(x)} \bar r(t,x)$ where $\varepsilon\in (0,1)$, we integrate by parts
and we take the imaginary part, finally we get:
\begin{multline*}
\frac 12 \frac d{dt} \int_{\T^2} e^{-2Y_\varepsilon} |r(t)|^2=
2 \int_{\T^2} e^{-2Y_\varepsilon} \bar r(t)\nabla r(t) \cdot \nabla (Y_\varepsilon - Y) \\
+\lambda \Im \int_{\T^2} e^{-2Y_\varepsilon-pY}  \bar r(t)
( v_1(t)| v_1(t)|^p- v_2(t)| v_2(t)|^p)=I+II.
\end{multline*}
By the Sobolev embedding $H^\gamma\subset L^\infty$ we get
\begin{multline*}
II\leq \|e^{-pY}\|_{L^\infty} \| e^{-Y_\varepsilon}r(t)\|_{L^2}^2 \sup_{t\in [0,T]} (\|v_1(t)\|_{H^\gamma}^p+\|v_2(t)\|_{H^\gamma}^p)\\
< C(\omega) \| e^{-Y_\varepsilon}r(t)\|_{L^2}^2.\end{multline*}
For the term $I$ we get by duality and Lemma 2.2 in \cite{OPT} (see the proof of Proposition~ \ref{ercol10} for more details) the following estimate
\begin{multline*}
|I|\leq \|\nabla (Y_\varepsilon - Y)\|_{W^{-s,q}} 
\|e^{-2Y_\varepsilon} \bar r(t)\nabla r(t)\|_{W^{s,q'}}
\\< C(\omega) \varepsilon^\kappa 
(\|e^{-2Y_\varepsilon}\|_{L^{q_1}} \|\bar r(t)\|_{L^{q_2}}\|\nabla r(t)\|_{W^{s,2}}
+\|e^{-2Y_\varepsilon}\|_{W^{s,q_1}} \|\bar r(t)\|_{L^{q_2}}\|\nabla r(t)\|_{L^{2}}
\\+\|e^{-2Y_\varepsilon}\|_{L^{q_1}} \|\bar r(t)\|_{W^{s,q_2}} \|\nabla r(t)\|_{L^{2}}  ) 
\end{multline*}
where $s\in (0,1), q\in (0, \infty)$, $\frac 1{q'}=\frac 1{q_1}+
\frac 1{q_2}+\frac 1{2}$ and we have used Proposition \ref{specialrandom} at the second step.
By Sobolev embedding, provided that we choose $s$ small enough, and Proposition \ref{specialrandom} one can show that 
\begin{equation*}
|I|< C(\omega) \varepsilon^\kappa \sup_{t\in [0,T]} (\|v_1(t)\|_{H^\gamma}^{2}+\|v_2(t)\|_{H^\gamma}^{2})
< C(\omega) \varepsilon^\kappa.
\end{equation*}
Summarizing we get
$$\frac d{dt} \int_{\T^2} e^{-2Y_\varepsilon} |r(t)|^2<C(\omega) (\int_{\T^2} e^{-2Y_\varepsilon} |r(t)|^2
+\varepsilon^\kappa), \quad r(0)=0.
$$
We deduce by Proposition \ref{gronwclass} that
$$\int_{\T^2} e^{-2Y_\varepsilon} |r(t)|^2<C(\omega) \varepsilon^\kappa e^{C(\omega)t}$$
and hence by passing to the limit $\varepsilon\rightarrow 0$ we deduce
$\int_{\T^2} e^{-2Y} |r(t)|^2=0$.
\end{proof}
%%%%%%%%%%%%%%%%%%
%%%%%%%%%%%%%%%%%%
\section{Proof of Theorem \ref{corproof}}
The proof of \eqref{eas}  follows by combining the transformation \eqref{gaugetranf}
with Theorem~\ref{main}. Since now we shall denote by $\Sigma\subset \Omega$ 
the event of full probability given by the intersection
of the ones defined in Theorem~\ref{main} and in Proposition \ref{ercol10}.  In order to prove \eqref{diffnew} we first show
\begin{equation}\label{diff}
\sup_{t\in [-T,T]} \big \|e^{Y_\varepsilon(x,\omega)} |u_\varepsilon(t,x,\omega)| - | v (t,x, \omega)| \big \|_
{{H^\gamma(\T^2)}\cap L^\infty (\T^2)} \overset{\varepsilon\rightarrow 0}
\longrightarrow 0, \quad \forall\, \omega \in \Sigma.
\end{equation}
Notice that from \eqref{eas} and the Sobolev embedding, we get 
$$\sup_{t\in [-T,T]} \|e^{-iC_\varepsilon t} e^{Y_\varepsilon} u_\varepsilon(t)- v(t)\|_{L^2\cap L^\infty}
\overset{\varepsilon\rightarrow 0} \longrightarrow 0, \quad \forall \,\omega\in \Sigma$$
and hence by the triangle inequality in $\C$, 
\begin{equation}\label{L2Linf}\sup_{t\in [-T,T]} \|e^{Y_\varepsilon} |u_\varepsilon(t)|-| v(t)|\|_{L^2\cap L^\infty}
\overset{\varepsilon\rightarrow 0} \longrightarrow 0,  \quad \forall \,\omega\in \Sigma.
\end{equation}
Next we prove 
\begin{equation}\label{L2Linfremcat}\sup_{t\in [-T,T]} \|e^{Y_{\varepsilon}} |u_{\varepsilon}(t)|-| v(t)|\|_{H^\gamma}
\overset{\varepsilon\rightarrow 0} \longrightarrow 0, \quad \forall\, \omega\in \Sigma, \quad \gamma\in (0,1).
\end{equation}
Since
\begin{equation}\label{tinadoc}
\sup_{t\in [-T,T]} \|v_{\varepsilon}(t) -  v(t)\|_{H^\gamma}\overset {\varepsilon\rightarrow 0} \longrightarrow 0,
\quad \forall \,\omega\in \Sigma,
\quad \gamma\in [0,2)
\end{equation}
we get in particular 
\begin{equation}\label{preego}\sup_{t\in [-T,T]} 
\| v(t)\|_{H^1} <\infty, \quad \forall\, \omega\in \Sigma 
\end{equation}
and hence by the diamagnetic inequality
\begin{equation}\label{postego}\sup_{t\in [-T,T]} 
\|| v(t)|\|_{H^1} <\infty, \quad \forall\, \omega\in \Sigma.
\end{equation}
On the other hand by \eqref{tinadoc} we have
\begin{equation}\label{prepostego}
\sup_{\substack{\varepsilon\in (0,1), t\in [-T,T]}} \|v_{\varepsilon}(t)\|_{H^\gamma} <\infty, \quad \forall\, \omega\in \Sigma, \quad \gamma\in [0, 2).
\end{equation}
Next notice that $e^{Y_{\varepsilon}} |u_{\varepsilon}(t)|=|v_{\varepsilon}(t)|$
and hence by the diamagnetic inequality $\|e^{Y_{\varepsilon}} |u_{\varepsilon}(t)|\|_{H^1}\leq \|v_{\varepsilon}(t)\|_{H^1}$ and hence summarizing
%Hence by \eqref{postego} and \eqref{prepostego} we have 
\begin{equation}\label{unifboun}\sup_{\substack{\varepsilon\in (0,1), t\in [-T,T]}} 
\big(\max\big \{ \|e^{Y_{\varepsilon}} |u_{\varepsilon}(t)|\|_{H^1}, \|| v(t)|\|_{H^1}\big \}\big)<\infty,
\quad \quad \forall\, \omega\in \Sigma.
\end{equation}
%In particular to control $\|e^{Y_{\varepsilon}} |u_{\varepsilon}(t)|\|_{H^1}$ 
%notice that $e^{Y_{\varepsilon}} |u_{\varepsilon}(t)|=|v_{\varepsilon}(t)|$
%and hence by the diamagnetic inequality $\|e^{Y_{\varepsilon}} |u_{\varepsilon}(t)|\|_{H^1}\leq \|v_{\varepsilon}(t)\|_{H^1}$ and we conclude by \eqref{prepostego}. 
By interpolation between  the uniform bound \eqref{unifboun} and 
\eqref{L2Linf} we get \eqref{L2Linfremcat}.
\\

Finally we prove \eqref{diffnew}.  We show first the following fact
\begin{equation}\label{fiffaltern}\sup_{t\in [-T,T]} \|e^{Y_\varepsilon} |u_\varepsilon(t)| - e^{Y} |u_\varepsilon(t)|\|_{H^\gamma
\cap L^\infty}
\overset{\varepsilon\rightarrow 0 }\longrightarrow 0, \quad \forall\, \omega\in \Sigma,
\quad \gamma\in (0,1)\
\end{equation}
which in turn implies by \eqref{diff} the following convergence
\begin{equation}\label{diffaltenew}
\sup_{t\in [-T,T]} \big \|e^{Y} |u_\varepsilon(t)| - | v(t)| \big \|_
{{H^\gamma}\cap L^\infty} \overset{\varepsilon\rightarrow 0}
\longrightarrow 0, \quad \forall \,\omega\in \Sigma, \quad \gamma\in (0,1).
\end{equation}
We shall establish the following equivalent form of \eqref{fiffaltern}:
\begin{equation}\label{almfinjes}
\|e^{Y}(1-e^{Y_{\varepsilon}-Y}) |u_{\varepsilon}(t)|\big \|_{{H^\gamma}\cap L^\infty}
\overset{\varepsilon\rightarrow 0}\longrightarrow 0, \quad \forall \,\omega \in \Sigma, \quad \gamma\in (0,1).
\end{equation}
We first focus on the case  $\gamma=0$. In this case we get \eqref{almfinjes} by combining the following facts:
we have the convergence
$Y_\varepsilon(x)
\overset{\varepsilon\rightarrow 0} \longrightarrow Y(x)$ for every $\omega\in \Sigma$
in the $L^\infty$ topology (see Proposition \ref{specialrandom});
we have the following bound
$$\|u_{\varepsilon}(t)\|_{L^2}=\|e^{-Y_{\varepsilon}} v_{\varepsilon}(t)\|_{L^2}
\leq \|e^{-Y_{\varepsilon}}\|_{L^\infty} \| v_{\varepsilon}(t)\|_{L^2}$$
and hence $\|u_{\varepsilon}(t)\|_{L^2}$ is bounded for every $\omega\in \Sigma$ 
by \eqref{prepostego} and Proposition \ref{specialrandom}.

In order to establish \eqref{almfinjes} for $\gamma\in (0,1)$ it is sufficient to interpolate between the convergence for $\gamma=0$ (already established above)  with the uniform bound 
\begin{equation*}
\sup_{\substack{\varepsilon\in (0,1), t\in [-T,T]}}\|e^{Y}(1-e^{Y_{\varepsilon}-Y}) |u_{\varepsilon}(t)|\big \|_{{H^\gamma}}<\infty,
\quad \forall\, \omega\in \Sigma.
\end{equation*}
In order to establish this bound it is sufficient to notice that for every $\omega\in \Sigma$
\begin{equation}\label{5punto6primo}
\sup_{\varepsilon\in (0,1)} \{\|e^{Y}\|_{H^\gamma\cap L^\infty},
\|1-e^{Y_{\varepsilon}-Y} \|_{H^\gamma\cap L^\infty},
\||u_{\varepsilon}(t)|\|_{H^\gamma\cap L^\infty}\}<\infty
\end{equation}
and to recall that $H^\gamma\cap L^\infty$ is an algebra.
We recall that the boundedness of 
$\||u_{\varepsilon}(t)|\|_{H^\gamma\cap L^\infty}$ comes on one hand by combining
\eqref{prepostego} with
\begin{equation*}
%\label{presobego}
\||u_{\varepsilon}(t)|\|_{L^\infty}=\|e^{-Y_{\varepsilon}} |v_{\varepsilon}(t)|\|_{L^\infty}\leq \|e^{-Y_{\varepsilon}}\|_{L^\infty} \|v_{\varepsilon}(t)\|_{L^\infty} 
\lesssim \|v_{\varepsilon}(t)\|_{H^s}, 
\end{equation*}
where $s>1$. On the other hand we have the following computation:
\begin{multline*}
\||u_{\varepsilon}(t)|\|_{H^\gamma}=
\|e^{-Y_{\varepsilon}(x)} |v_{\varepsilon}(t)| \|_{H^\gamma}
\\
\leq \|e^{-Y_{\varepsilon}}\|_{L^\infty\cap H^\gamma} \||v_{\varepsilon}(t)| \|_{L^\infty\cap H^\gamma}
\lesssim \||v_{\varepsilon}(t)| \|_{L^\infty\cap H^\gamma}, 
\quad \gamma\in [0, 1)
\end{multline*} where we have used Proposition \ref{specialrandom}
and hence we get the desired uniform bound since by the diamagnetic inequality
$$\||v_{\varepsilon}(t)| \|_{L^\infty\cap H^\gamma}\leq \||v_{\varepsilon}(t)| \|_{L^\infty\cap H^1}
\lesssim \|v_{\varepsilon}(t)\|_{L^\infty\cap H^1}$$
and we conclude by \eqref{prepostego}.\\

Let us now establish  \eqref{diffnew}. Notice that by combining  \eqref{L2Linf}, \eqref{L2Linfremcat} and \eqref{almfinjes} 
we have:
\begin{equation}\label{diffaltenewstron}
\sup_{t\in [-T,T]} \big \|e^{Y} |u_{\varepsilon}(t)| - | v(t)| \big \|_
{{H^\gamma}\cap L^\infty} \overset{\varepsilon\rightarrow 0}
\longrightarrow 0, \quad \forall \, \omega\in \Sigma,\quad \gamma\in (0,1).
\end{equation}
Hence \eqref{diffnew} in the case $\gamma=0$ and the $L^\infty$ convergence, follow from \eqref{diffaltenewstron} since $e^{-Y}\in L^\infty$ for every $\omega\in \Sigma$
(see Proposition~\ref{specialrandom}). To prove \eqref{diffnew} 
in the general case 
$\gamma\in (0,1)$ it is sufficient to make interpolation between $\gamma=0$ and the bound 
\begin{equation}\label{lastiss}
\sup_\varepsilon \{\|e^{-Y}\|_{H^\gamma\cap L^\infty}, 
\|e^{Y} |u_{\varepsilon}(t)| - | v(t)|
\|_{H^\gamma\cap L^\infty}\}<\infty, \quad \forall \, \omega\in \Sigma \end{equation}
which in turn implies, thanks to the fact that ${H^\gamma\cap L^\infty}$ is an algebra,
that  the quantity $\||u_{\varepsilon}(t)| - e^{-Y}| v(t)|
\|_{H^\gamma}$ is uniformly bounded for every $\omega\in \Sigma$.
The proof of \eqref{lastiss} follows by combining: 
the estimate \eqref{5punto6primo}, the bound $\|v(t)\|_{L^\infty}\lesssim 
\| v(t)\|_{H^s}<C$ for $s\in (1,2)$ where we used \eqref{prepostego} in the last inequality, by the bound \eqref{postego} and finally by the properties of $Y$ 
(see Proposition~ \ref{specialrandom}).
This completes the proof of Theorem~\ref{corproof}.
%%%%%%%%%%%
%%%%%
\section{Proof of Proposition~\ref{modifen}}
In the sequel, we use the following simplified notation:
$v=v_\varepsilon(t,x)$, $Y=Y_\varepsilon(x)$ and ${\mathcal E}={\mathcal E}_{\varepsilon}$. 
Moreover we denote by $(\cdot , \cdot)$ the $L^2$ scalar product. We also drop the explicit dependence
of the functions involved from the variable $(t,x)$, in order to make the computations more compact.
\\

We are interested to construct a suitable energy with the following structure
$$\mathcal{E}(v)=(\Delta v, \Delta v e^{-2Y})+ {\rm remainder}.$$
By using the equation solved by $v$ we have the following identity:
\begin{multline}\label{co}
\frac d{dt} (\Delta v, \Delta v e^{-2Y}) = 2\Re (\partial_t \Delta v,  \Delta v e^{-2Y})
= 2\Re (\partial_t \Delta v, i\partial_t v e^{-2Y} ) 
\\
+ 2\Re (\partial_t \Delta v, 2 \nabla Y \cdot \nabla ve^{-2Y}) 
-2\Re (\partial_t \Delta v, v :|\nabla Y|^2: e^{-2Y})
\\-2\lambda \Re (\partial_t \Delta v,  v|v|^p e^{-(p+2)Y})
=I+II+III+IV.
\end{multline}
Notice that
\begin{multline}\label{I}I=-2\Re (\partial_t \nabla v, i\partial_t \nabla v e^{-2Y} )-2\Re (\partial_t \nabla v, i\partial_t v \nabla (e^{-2Y}) )\\
=-2 \Im  (\partial_t \nabla v, \partial_t v \nabla (e^{-2Y}) ).
\end{multline}
Moreover we have 
\begin{multline}II= 2\Re (\partial_t \Delta v, 2 \nabla Y \cdot \nabla ve^{-2Y})\\
=2\Re (\Delta v, \partial_t \nabla v \cdot \nabla (e^{-2Y})) + 2 \frac d{dt}\Re (\Delta v, 2 \nabla Y \cdot \nabla ve^{-2Y})
\end{multline}
and using again the equation
\begin{multline}II=2 \frac d{dt}\Re (\Delta v, 2 \nabla Y \cdot \nabla ve^{-2Y})+2\Re (i\partial_t v, \partial_t \nabla v \cdot \nabla (e^{-2Y})) 
\\
+4\Re (\nabla Y \cdot \nabla v, \partial_t \nabla v \cdot \nabla (e^{-2Y}))
\\
-
2\Re (v:|\nabla Y|^2: , \partial_t \nabla v \cdot \nabla (e^{-2Y})) 
-2\lambda \Re (e^{-pY}v|v|^p , \partial_t \nabla v \cdot \nabla (e^{-2Y}))
\\
= 2\frac d{dt}\Re (\Delta v, 2 \nabla Y \cdot \nabla ve^{-2Y})-2\Im (\partial_t v, \partial_t \nabla v \cdot \nabla (e^{-2Y})) \\
+4\Re (\nabla Y \cdot \nabla v, \partial_t \nabla v \cdot \nabla (e^{-2Y}))-
2\Re (v :|\nabla Y|^2: , \partial_t \nabla v \cdot \nabla (e^{-2Y})) \\
-2\lambda \Re (e^{-pY} v|v|^p , \partial_t \nabla v \cdot \nabla (e^{-2Y}))
\\
=2 \frac d{dt}\Re (\Delta v, 2 \nabla Y \cdot \nabla ve^{-2Y})+2 \Im (\partial_t \nabla v, \partial_t v \nabla (e^{-2Y})) \\
+4\Re (\nabla Y \cdot \nabla v, \partial_t \nabla v \cdot \nabla (e^{-2Y}))-
2\Re (v :|\nabla Y|^2: , \partial_t \nabla v \cdot \nabla (e^{-2Y})) \\
-2\lambda \Re (e^{-pY}  v|v|^p , \partial_t \nabla v \cdot \nabla (e^{-2Y}))
\end{multline}
and hence by \eqref{I} we get
\begin{multline*}
II  =  2 \frac d{dt}\Re (\Delta v, 2 \nabla Y \cdot \nabla ve^{-2Y})
 -I 
+4\Re (\nabla Y \cdot \nabla v, \partial_t \nabla v \cdot \nabla (e^{-2Y}))
\\
-
2\Re (v :|\nabla Y|^2: , \partial_t \nabla v \cdot \nabla (e^{-2Y})) 
-2\lambda \Re (e^{-pY} v|v|^p , \partial_t \nabla v \cdot \nabla (e^{-2Y}))
\\
= 2\frac d{dt}\Re (\Delta v, 2 \nabla Y \cdot \nabla ve^{-2Y}) 
-I 
+4\Re (\nabla Y \cdot \nabla v, \partial_t \nabla v \cdot \nabla (e^{-2Y}))
\\
-2 \frac d{dt} \Re (v :|\nabla Y|^2: , \nabla v \cdot \nabla (e^{-2Y})) 
+2\Re (\partial_tv:|\nabla Y|^2:, \nabla v \cdot \nabla (e^{-2Y})) 
\\
-2\lambda \Re (e^{-pY}v|v|^p , \partial_t \nabla v \cdot \nabla (e^{-2Y})).
\end{multline*}
Summarizing we get from the previous chain of identities
\begin{multline}\label{cor1}
I+II = 2\Re (\partial_tv:|\nabla Y|^2:, \nabla v \cdot \nabla (e^{-2Y}))
\\
+ 2 \frac d{dt}\Re (\Delta v, 2 \nabla Y \cdot \nabla ve^{-2Y}) 
+4\Re (\nabla Y \cdot \nabla v, \partial_t \nabla v \cdot \nabla (e^{-2Y}))
\\
-
2\frac d{dt} \Re (v :|\nabla Y|^2: , \nabla v \cdot \nabla (e^{-2Y})) 
-2\lambda \Re (e^{-pY} v|v|^p , \partial_t \nabla v \cdot \nabla (e^{-2Y})).
\end{multline}
On the other hand we can compute
 \begin{multline}\label{cor}
 -2\lambda \Re (e^{-pY} v|v|^p , \partial_t \nabla v \cdot \nabla (e^{-2Y}))\\
 = 2\lambda \Re (\nabla ( e^{-pY} v|v|^p) , \partial_t \nabla v e^{-2Y})
 +2\lambda \Re 
(e^{-pY} v|v|^p , \partial_t \Delta v e^{-2Y})
\\
= 2\lambda \Re (\nabla ( e^{-pY}v|v|^p) , \partial_t \nabla v e^{-2Y})
-IV
\\
= 2\lambda \Re (e^{-pY} \nabla v|v|^p, \partial_t \nabla v e^{-2Y})
+2\lambda \Re (e^{-pY}v\nabla (|v|^p), \partial_t \nabla v e^{-2Y})\\
+2\lambda \Re (\nabla(e^{-pY})
v |v|^p, \partial_t \nabla v e^{-2Y})
-IV
\end{multline}
and hence
\begin{multline}\label{vir}
\dots =\lambda \Re (\partial_t (|\nabla v|^2)|v|^p,  e^{-(p+2)Y})
\\+2\lambda \Re (v\nabla (|v|^p), \partial_t \nabla v  e^{-(p+2)Y})+2\lambda \Re (\nabla(e^{-pY})v |v|^p, \partial_t \nabla v e^{-2Y})
-IV
\\
=\lambda \Re (\partial_t (|\nabla v|^2)|v|^p,  e^{-(p+2)Y})
+2\lambda \frac d{dt} \Re (v\nabla (|v|^p), \nabla v  e^{-(p+2)Y})
-2\lambda \Re (\partial_t v\nabla (|v|^p), \nabla v  e^{-(p+2)Y})\\
-2\lambda \Re (v\nabla \partial_t (|v|^p), \nabla v  e^{-(p+2)Y})
+2\lambda \Re (\nabla(e^{-pY})v |v|^p, \partial_t \nabla v e^{-2Y})
-IV
\\
=\lambda \frac d{dt} \Re (|\nabla v|^2|v|^p,  e^{-(p+2)Y})
-\lambda \Re (|\nabla v|^2 \partial_t (|v|^p),  e^{-(p+2)Y})
+2\lambda \frac d{dt} \Re (v\nabla (|v|^p), \nabla v  e^{-(p+2)Y})\\
-2\lambda \Re (\partial_t v\nabla (|v|^p), \nabla v  e^{-(p+2)Y})
-\frac{\lambda p}{2} (\partial_t (\nabla(|v|^2)|v|^{p-2}), \nabla (|v|^2)  e^{-(p+2)Y})
\\+2\lambda \Re (\nabla(e^{-pY})v |v|^p, \partial_t \nabla v e^{-2Y})
-IV
\\
=\lambda \frac d{dt}  \Re (|\nabla v|^2 |v|^p,  e^{-(p+2)Y})
-\lambda \Re (|\nabla v|^2 \partial_t (|v|^p),  e^{-(p+2)Y})
+2\lambda \frac d{dt} \Re (v\nabla (|v|^p), \nabla v  e^{-(p+2)Y})\\
-2\lambda \Re (\partial_t v\nabla (|v|^p), \nabla v  e^{-(p+2)Y})
-\frac {\lambda p}{2}(\partial_t \nabla(|v|^{2})|v|^{p-2}, \nabla (|v|^2)  e^{-(p+2)Y})
\\-\frac{\lambda p}{2}  (\nabla(|v|^2) \partial_t (|v|^{p-2}), \nabla (|v|^2)  e^{-(p+2)Y})
+2\lambda \Re (\nabla(e^{-pY})v |v|^p, \partial_t \nabla v e^{-2Y})
-IV\\
=\lambda \frac d{dt}  \Re (|\nabla v|^2|v|^p,  e^{-(p+2)Y})
-\lambda \Re (|\nabla v|^2 \partial_t (|v|^p),  e^{-(p+2)Y})
+2\lambda \frac d{dt} \Re (v\nabla (|v|^p), \nabla v  e^{-(p+2)Y})\\
-2\lambda \Re (\partial_t v\nabla (|v|^p), \nabla v  e^{-(p+2)Y})
-\frac {\lambda p}4 (\partial_t (|\nabla(|v|^2)|^2)|v|^{p-2},  e^{-(p+2)Y})
\\- \frac{\lambda p}2 (\nabla(|v|^2) \partial_t (|v|^{p-2}), \nabla (|v|^2)  e^{-(p+2)Y})
+2\lambda \Re (\nabla(e^{-pY})v |v|^p, \partial_t \nabla v e^{-2Y})
-IV
\\
=\lambda \frac d{dt}  \Re (|\nabla v|^2|v|^p,  e^{-(p+2)Y})
-\lambda \Re (|\nabla v|^2 \partial_t (|v|^p), e^{-(p+2)Y}) 
+2\lambda \frac d{dt} \Re (v\nabla (|v|^p), \nabla v  e^{-(p+2)Y})\\
-2\lambda \Re (\partial_t v\nabla (|v|^p), \nabla v  e^{-(p+2)Y})
-\frac {\lambda p} 4 \frac d{dt}(|\nabla(|v|^2)|^2|v|^{p-2},  e^{-(p+2)Y})\\
+\frac {\lambda p}4 (|\nabla(|v|^2)|^2\partial_t (|v|^{p-2}),  e^{-(p+2)Y}) 
-\frac{\lambda p}2(\nabla(|v|^2) \partial_t (|v|^{p-2}), \nabla (|v|^2)  e^{-(p+2)Y})\\
+2\lambda \Re (\nabla(e^{-pY})v |v|^p, \partial_t \nabla v e^{-2Y})-IV\,. 
\end{multline}
By combining \eqref{cor1}, \eqref{cor} and \eqref{vir} we get
\begin{multline}\label{cop} I+II+IV
%\\
=2\Re (\partial_tv:|\nabla Y|^2:, \nabla v \cdot \nabla (e^{-2Y}))
\\+ 2 \frac d{dt}\Re (\Delta v, 2 \nabla Y \cdot \nabla ve^{-2Y}) 
+4\Re (\nabla Y \cdot \nabla v, \partial_t \nabla v \cdot \nabla (e^{-2Y}))
\\-2\frac d{dt} \Re (v :|\nabla Y|^2: , \nabla v \cdot \nabla (e^{-2Y}))\\
+\lambda \frac d{dt}  \Re (|\nabla v|^2|v|^p, e^{-(p+2)Y})
-\lambda \Re (|\nabla v|^2 \partial_t (|v|^p), e^{-(p+2)Y}) 
+2\lambda \frac d{dt} \Re (v\nabla (|v|^p), \nabla v  e^{-(p+2)Y})\\
-2\lambda \Re (\partial_t v\nabla (|v|^p), \nabla v  e^{-(p+2)Y})
-\frac {\lambda p} 4 \frac d{dt}(|\nabla(|v|^2)|^2|v|^{p-2},  e^{-(p+2)Y})\\
+\frac {\lambda p}4 (|\nabla(|v|^2)|^2\partial_t (|v|^{p-2}),  e^{-(p+2)Y}) 
-\frac{\lambda p}2(\nabla(|v|^2) \partial_t (|v|^{p-2}), \nabla (|v|^2)  e^{-(p+2)Y})\\
+2\lambda \Re (\nabla(e^{-pY})v |v|^p, \partial_t \nabla v e^{-2Y}).
\end{multline}
%------------
Next notice that
\begin{equation}\label{viru}III= -2\frac d{dt}\Re (\Delta v, v :|\nabla Y|^2: e^{-2Y})+ 
2 \Re (\Delta v, \partial_t v :|\nabla Y|^2: e^{-2Y}).
\end{equation}
Summarzing we get
\begin{multline}\label{cagsot}I+II+III+IV
%\\
=2\Re (\partial_tv:|\nabla Y|^2:, \nabla v \cdot \nabla (e^{-2Y}))
\\+ 2 \frac d{dt}\Re (\Delta v, 2 \nabla Y \cdot \nabla ve^{-2Y}) +
4\Re (\nabla Y \cdot \nabla v, \partial_t \nabla v \cdot \nabla (e^{-2Y}))
\\-2\frac d{dt} \Re (v :|\nabla Y|^2: , \nabla v \cdot \nabla (e^{-2Y}))
+\lambda \frac d{dt}  \Re (|\nabla v|^2|v|^p,  e^{-(p+2)Y})\\
-\lambda \Re (|\nabla v|^2 \partial_t (|v|^p),  e^{-(p+2)Y}) 
+2\lambda \frac d{dt} \Re (v\nabla (|v|^p), \nabla v  e^{-(p+2)Y})\\
-2\lambda \Re (\partial_t v\nabla (|v|^p), \nabla v  e^{-(p+2)Y})
-\frac {\lambda p} 4 \frac d{dt}(|\nabla(|v|^2)|^2|v|^{p-2},  e^{-(p+2)Y})\\
+\frac {\lambda p}4 (|\nabla(|v|^2)|^2\partial_t (|v|^{p-2}),  e^{-(p+2)Y}) 
-\frac{\lambda p}2(\nabla(|v|^2) \partial_t (|v|^{p-2}), \nabla (|v|^2)  e^{-(p+2)Y})\\
+2\lambda \frac d{dt} \Re (\nabla(e^{-pY})v |v|^p, \nabla v e^{-2Y})
-2\lambda \Re (\nabla(e^{-pY})\partial_t (v |v|^p), \nabla v e^{-2Y})\\
-2\frac d{dt}\Re (\Delta v, v :|\nabla Y|^2: e^{-2Y})+ 
2 \Re (\Delta v, \partial_t v :|\nabla Y|^2: e^{-2Y}).
\end{multline}
Next by using the equation we compute the first and last term on the r.h.s. in \eqref{cagsot} as follows:
\begin{multline}\label{cagsot2}
2\Re (\partial_tv:|\nabla Y|^2:, \nabla v \cdot \nabla (e^{-2Y}))
+ 
2 \Re (\Delta v, \partial_t v :|\nabla Y|^2: e^{-2Y})
\\= 2\Re (\partial_tv:|\nabla Y|^2:, \nabla v \cdot \nabla (e^{-2Y}))+ 2 \Re (2\nabla v \cdot \nabla Y, 
\partial_t v :|\nabla Y|^2: e^{-2Y})
\\- 2 \Re (v:|\nabla Y|^2:, \partial_t v :|\nabla Y|^2: e^{-2Y})- 2 \lambda \Re ( e^{-pY}v|v|^p, \partial_t v :|\nabla Y|^2:  e^{-2Y})
 \\
 = -4\Re (\partial_t v :|\nabla Y|^2:, \nabla v \cdot \nabla Y e^{-2Y})+ 4 \Re (\nabla v\cdot  \nabla Y, 
\partial_t v :|\nabla Y|^2: e^{-2Y})
\\- 2 \Re (v:|\nabla Y|^2:, \partial_t v :|\nabla Y|^2: e^{-2Y})- 2 \lambda \Re (v|v|^p, \partial_t v :|\nabla Y|^2:   e^{-(p+2)Y})
\\=-\frac d{dt} (|v|^2:|\nabla Y|^2:, :|\nabla Y|^2: e^{-2Y})
- \frac {2 \lambda}{p+2} \frac d{dt}\Re (|v|^{p+2}, :|\nabla Y|^2:  e^{-(p+2)Y}).
\end{multline}
Finally we show that the third term on the r.h.s. in \eqref{cagsot} can be written as a total derivative w.r.t. time variable:
\begin{multline}\label{cagsot1}4\Re (\nabla Y \cdot \nabla v, \partial_t \nabla v \cdot \nabla (e^{-2Y}))\\
=-8 \Re \sum_{i,j=1}^2 \int_{\T^2} e^{-2Y} \partial_i Y \partial_i v \partial_t \partial_j \bar v \partial_j Y
=-4 \Re \sum_{i=1}^2 \int_{\T^2} e^{-2Y}  \partial_t (|\partial_i v|^2) (\partial_i Y)^2
\\-8 \Re \int_{\T^2} e^{-2Y} \partial_1 Y \partial_1 v \partial_t \partial_2 \bar v \partial_2 Y
-8 \Re \int_{\T^2} e^{-2Y} \partial_2 Y \partial_2 v \partial_t \partial_1 \bar v \partial_1 Y 
\\
=-4 \frac d{dt}  \sum_{i=1}^2 \int_{\T^2} e^{-2Y}  (|\partial_i v|^2) (\partial_i Y)^2 
-8 \Re \int_{\T^2} e^{-2Y} \partial_1 Y \partial_2 Y (\partial_1 v \partial_t \partial_2 \bar v 
+\partial_2 v \partial_t \partial_1 \bar v)
\\
=-4 \frac d{dt}  \sum_{i=1}^2 \int_{\T^2} e^{-2Y}  (|\partial_i v|^2) (\partial_i Y)^2 
-8 \Re \int_{\T^2} e^{-2Y} \partial_1 Y \partial_2 Y \partial_t (\partial_1 v \partial_2 \bar v) 
\\
=-4 \frac d{dt}  \sum_{i=1}^2 \int_{\T^2} e^{-2Y}  (|\partial_i v|^2) (\partial_i Y)^2 
-8 \frac d{dt} \Re \int_{\T^2} e^{-2Y} \partial_1 Y \partial_2 Y \partial_1 v \partial_2 \bar v.
\end{multline}
We conclude the proof of Proposition~\ref{modifen} by combining \eqref{co}, \eqref{cagsot}, \eqref{cagsot2}, \eqref{cagsot1}.

\end{document}